\documentclass[12pt]{article}
\usepackage{amsmath,amssymb,amsthm,amsfonts,graphicx,xcolor}
\usepackage{mathrsfs}
\usepackage{indentfirst}
\usepackage{chngcntr}
\usepackage{bbm}
\usepackage[blocks]{authblk}
\usepackage{newtxmath}
\usepackage{cite}
\usepackage[colorlinks=true,linkcolor=blue,citecolor=blue,urlcolor=blue]{hyperref}
\usepackage{xurl}
\allowdisplaybreaks[2]
\usepackage{geometry}
\usepackage{verbatim}
\DeclareMathOperator{\supp}{supp}
\newcommand{\doi}[1]{\href{https://doi.org/#1}{\nolinkurl{doi:#1}}}

\begin{document}
\renewcommand{\a}{\alpha}
\newcommand{\D}{\Delta}
\newcommand{\ddt}{\frac{\mathrm{d}}{\mathrm{d}t}}
\counterwithin{equation}{section}
\newcommand{\e}{\epsilon}
\newcommand{\eps}{\varepsilon}
\newtheorem{theorem}{Theorem}[section]
\newtheorem{proposition}{Proposition}[section]
\newtheorem{lemma}[theorem]{Lemma}
\newtheorem{remark}[theorem]{Remark}
\newtheorem{example}{Example}[section]
\newtheorem{definition}{Definition}[section]
\newtheorem{corollary}[theorem]{Corollary}
\makeatletter
\newcommand{\rmnum}[1]{\romannumeral #1}
\newcommand{\Rmnum}[1]{\expandafter\@slowromancap\romannumeral #1@}
\makeatother

\title{Observability Inequalities and the Logvinenko--Sereda Theorem for the Dunkl Transform}
\author{Xingyu Zhao$^{1}$\footnote{Corresponding author.}, Longben Wei$^{2}$, Zhiwen Duan$^{1}$ \\
			{\small {\it $^{1}$ School of Mathematics and Statistics, Huazhong University of Science }}\\
			{\small {\it and Technology, Wuhan, {\rm 430074,} P.R.China}}  \\
			{\small {\it $^{2}$ School of Mathematical Sciences, Guizhou Normal University, Guiyang 550025, P.R. China}}\\
			{\small {\it Email: zhaoxingyu@hust.edu.cn~(X.Zhao)}}\\
			{\small {\it Email: longbenwei51@gmail.com~(L.Wei)}}\\
			{\small {\it Email: duanzhw@hust.edu.cn~(Z.Duan)}}}

\date{September 17, 2026}
\maketitle

\begin{abstract} 
Let $R$ be a normalized root system in $\mathbb{R}^d$ with reflection group $G$, let $k$ be a $G$-invariant multiplicity function, and let $\mathcal{F}_k$ be the associated Dunkl transform. We write $k_\alpha=k(\alpha)>0$ and $\mathrm{d}\mu_k(x)=w(x)\,\mathrm{d}x$, where $w(x)=\prod_{\alpha\in R_+}|\langle x,\alpha\rangle|^{2k_\alpha}$.
 
We study observability, H\"older-type interpolation, and spectral inequalities for the Dunkl heat equation on $\mathbb{R}^d$. We establish a Bernstein inequality for ordinary derivatives and a Logvinenko--Sereda theorem with a spectral constant of the form $e^{C(1+N)}$ for functions whose Dunkl transforms are supported in $\overline{B(0,N)}$. We characterize observable sets as the measurable sets that are thick with respect to $\mu_k$, and prove the equivalence of the observability, H\"older-type interpolation, and spectral inequalities.
\end{abstract}

\noindent\textit{Key words:} Dunkl transform,  Logvinenko--Sereda theorem, Bernstein inequality, observability \\
\noindent\textit{2020 Mathematics Subject Classification:} 42B10, 43A32, 35K05,  93B07,42B35.

\section{Introduction}
% [R8-LANG-003] Removed redundant introductory wording.
We consider the Cauchy problem for the Dunkl heat equation:
\begin{equation}\label{EHEAT}
	\begin{cases}
		\partial_tu(t,x)-\Delta_ku(t,x)=0,
		& (t,x)\in(0,\infty)\times\mathbb{R}^d,\\
		u(0,x)=u_0(x),
		& u_0\in L_k^2(\mathbb{R}^d),
	\end{cases}
\end{equation}
Our aim is to establish quantitative unique continuation inequalities for the Dunkl heat equation \eqref{EHEAT}, where $L_k^2(\mathbb{R}^d)$ is the weighted space associated with the measure
% [R8-NOTATION-G001] Unified multiplicity/derivative/space/index notation or corrected a typographical inconsistency.
$ \mathrm{d}\mu_k(x)=w(x)\,\mathrm{d}x $, and $w(x)=\prod_{\alpha\in R_+}|\langle x,\alpha\rangle|^{2k_\alpha}$. Here
\[
\Delta_k=\sum_{j=1}^d T_j^2
\]
is the Dunkl Laplacian, with $\{ T_j\}_{j=1}^d$ denoting the Dunkl operators associated with a finite reflection group $G$ and a strictly positive multiplicity function $k$. Dunkl operators, introduced by Dunkl in \cite{CFD1}, are differential--difference operators naturally associated with finite reflection groups.

% [R8-LANG-004] Specified the core and distinguished the operator from its closure; nonnegative replaces positive.
The operator $-\Delta_k$, initially defined on $C_c^\infty(\mathbb{R}^d)$, is symmetric, nonnegative, and essentially self-adjoint on $L_k^2(\mathbb{R}^d)$; see \cite{AB}. We use the same notation for its self-adjoint closure. The resulting heat semigroup $H_t=e^{t\Delta_k}$ is self-adjoint and contractive on $L_k^2(\mathbb{R}^d)$. Its positivity follows from the maximum principle established in \cite{MR2}; precise definitions are recalled in Section~2.

We begin by introducing some notation.

% [R8-NOTATION-005] Standardized Schwartz, test-function, derivative, local norm, and essential-support notation.
\textit{Notation.} The symbols $C(\cdots)$, $C'(\cdots)$, and $C_j(\cdots)$ denote positive constants depending only on the indicated parameters; their values may change from one occurrence to the next. We write $B(x,r)$ for the open Euclidean ball with center $x$ and radius $r>0$, and $\mathbb{S}^{d-1}$ for the unit sphere. We set $\mathbb{N}=\{0,1,2,\dots\}$ and $\mathbb{N}^+=\mathbb{N}\setminus\{0\}$. Unless specified otherwise, $Q=(-1/2,1/2)^d$, and $x+LQ=\{x+Ly:y\in Q\}$ is the cube of side length $L>0$ centered at $x$. For $f\in L^2(\mathbb{R}^d)$, $\widehat f$ denotes its Fourier transform. The notation $\supp f$ refers to the essential closed support of a measurable function. We use $|x|=(\sum_{j=1}^d x_j^2)^{1/2}$ and $\langle x\rangle=(1+|x|^2)^{1/2}$, and write $\partial_j=\partial/\partial x_j$ and $\partial^\beta=\partial_1^{\beta_1}\cdots\partial_d^{\beta_d}$ for $\beta\in\mathbb{N}^d$. The Schwartz space is denoted by $\mathcal{S}(\mathbb{R}^d)$, and $C_c^\infty(\Omega)$ denotes the space of smooth compactly supported functions on an open set $\Omega\subset\mathbb{R}^d$.

For a measurable set $D\subset\mathbb{R}^d$, we write $|D|$ for its Lebesgue measure, $D^c$ for its complement, and $\mu_k(D)=\int_D\mathrm{d}\mu_k(x)$. The weighted spaces $L_k^p(D)=L^p(D,\mathrm{d}\mu_k)$ have norms
\begin{align*}
\|f\|_{L_k^p(D)}&=\left(\int_D|f(x)|^p\,\mathrm{d}\mu_k(x)\right)^{1/p},\qquad 1\le p<\infty,\\
\|f\|_{L_k^\infty(D)}&=\operatorname{ess\,sup}_{x\in D}|f(x)|.
\end{align*}
When $D=\mathbb{R}^d$, we may omit the domain. Since $w>0$ almost everywhere, the weighted and Lebesgue essential suprema agree; we therefore also use $L^\infty(D)$. For any measurable set $A$, $\chi_A$ denotes its characteristic function.

We first define relatively dense sets with respect to the weighted measure.
\begin{definition}\label{D1}
A measurable subset $E\subset\mathbb{R}^d$ is said to be relatively dense (or thick) with respect to $\mu_k$ if there exist $\gamma\in(0,1]$ and $L>0$ such that, for every $x\in\mathbb{R}^d$,
\begin{equation}
\mu_k(E\cap(x+LQ))\ge\gamma\,\mu_k(x+LQ),
\qquad Q=(-1/2,1/2)^d.
\end{equation}
Here $L$ is the side length of the cube.

In this case, $E$ is called a $(\gamma,L)$-relatively dense subset with respect to $\mu_k$.
\end{definition}
% [R8-LANG-006] Removed repetition and stated the parameter dependence proved in Section 3.
Section~3 shows that thickness with respect to Lebesgue measure is equivalent to thickness with respect to $\mu_k$, with the side length unchanged and the density parameter adjusted.
\begin{theorem}\label{TEV}
% [R8-MATH-007] Corrected the quantifiers in Theorem 1.1; its proof preserves the side length.
Let $E\subset\mathbb{R}^d$ be measurable. Then $E$ is thick with respect to $\mu_k$ if and only if it is thick with respect to Lebesgue measure. More precisely, in either direction, $(\gamma,L)$-thickness implies $(\widetilde\gamma,L)$-thickness for the other measure, where $\widetilde\gamma\in(0,1]$ depends only on $\gamma,d,R_+$, and $k$.
\end{theorem}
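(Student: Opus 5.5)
The plan is to show that, on every cube $x+LQ$, the weight $w$ is comparable to a polynomial in a sense that is uniform in $x$. Each factor $|\langle y,\alpha\rangle|^{2k_\alpha}$ is the absolute value of a linear form raised to a positive power, so $w=|P|^{\theta}$ fails to be literally a power of one polynomial only because the exponents $k_\alpha$ differ. Rather than rely on that form, I will use a uniform reverse-Hölder ($A_\infty$-type) estimate for $w$ restricted to cubes.

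\textbf{Key estimate.} I aim to find constants $C,\delta>0$, depending only on $d,R_+,k$, such that for every cube $P=x+LQ$ and every measurable $A\subset P$
\begin{equation*}
\frac{\mu_k(A)}{\mu_k(P)}\le C\Big(\frac{|A|}{|P|}\Big)^{\delta}.
\end{equation*}
This follows from the one-factor case together with Hölder's inequality. For a single linear form, Remez-type (or Carbery--Wright) inequalities give the sublevel bound
\[
|\{y\in P:|\langle y,\alpha\rangle|\le \lambda \sup_P|\langle\cdot,\alpha\rangle|\}|\le C\lambda|P|.
\]
Consequently $|\langle y,\alpha\rangle|^{-s}$ is integrable on $P$ for $s<1$, with normalized average bounded uniformly in $P$. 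The same computation, applied with positive powers, shows that $|\langle\cdot,\alpha\rangle|^{2k_\alpha}$ satisfies a reverse Hölder inequality with exponent $q>1$ uniformly over cubes. It is elementary to check directly that
\[
\tfrac1{|P|}\int_P|\ell|^{p}\sim \big(\sup_P|\ell|\big)^{p},
\]
with constants independent of $P$; this also follows from the one-dimensional scaling of $|t|^p$ on intervals.

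\textbf{Product step.} Let $m=|R_+|$. By Hölder with $m$ factors and the uniform comparability of every $L^p$ average of $|\ell|$ to $\sup_P|\ell|$, I obtain, for some $q>1$,
\[
\Big(\tfrac1{|P|}\int_P w^{q}\Big)^{1/q}\le C\,\tfrac1{|P|}\int_P w .
\]
Indeed, both sides are comparable to $\prod_\alpha(\sup_P|\langle\cdot,\alpha\rangle|)^{2k_\alpha}$; for the lower bound I use $\int_P w\ge c|P|\prod_\alpha(\sup_P|\ell_\alpha|)^{2k_\alpha}$, which I expect to be the main obstacle. To prove it, I will apply the sublevel estimate to each of the $m$ factors with $\lambda=1/(2Cm)$. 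This leaves a subset of $P$ of measure at least $|P|/2$ on which every $|\ell_\alpha|\ge\lambda\sup_P|\ell_\alpha|$. Hölder then gives $\mu_k(A)\le \|w\|_{L^q(P)}|A|^{1-1/q}$, and this implies the key estimate with $\delta=1-1/q$.

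\textbf{Both directions.} Suppose first that $E$ is $(\gamma,L)$-thick for Lebesgue measure. I apply the key estimate to $A=P\setminus E$, which satisfies $|A|\le(1-\gamma)|P|$. This gives $\mu_k(P\setminus E)\le C(1-\gamma)^\delta\mu_k(P)$, which is useless if $C(1-\gamma)^\delta\ge1$. Instead I use the lower bound directly: with $\lambda$ chosen small depending on $\gamma$, the good set $G\subset P$ where all $|\ell_\alpha|\ge\lambda\sup_P|\ell_\alpha|$ has $|P\setminus G|\le\gamma|P|/2$. Hence $|E\cap G|\ge\gamma|P|/2$, and
\[
\mu_k(E\cap P)\ge \prod_\alpha(\lambda\sup_P|\ell_\alpha|)^{2k_\alpha}\,\gamma|P|/2\ge c(\gamma)\mu_k(P),
\]
using $\mu_k(P)\le|P|\prod_\alpha(\sup_P|\ell_\alpha|)^{2k_\alpha}$.

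Conversely, suppose $E$ is $(\gamma,L)$-thick for $\mu_k$, and set $A=P\cap E$. The key estimate gives $\gamma\le C(|A|/|P|)^\delta$, so $|A|\ge(\gamma/C)^{1/\delta}|P|$. The side length $L$ is unchanged throughout, and all constants depend only on $\gamma,d,R_+,k$.
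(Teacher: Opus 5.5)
Your proof is correct. Its core mechanism matches the paper's: a Carbery--Wright sublevel bound for each linear form $\langle\cdot,\alpha\rangle$, removal of the part of the cube close to the root hyperplanes, and comparison of $w$ on the remaining good set with its supremum over the cube.

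What you do differently is measure the threshold relative to $\sup_P|\langle\cdot,\alpha\rangle|$. The paper works on unit cubes with an absolute threshold $\delta$. That forces it to use Lemma~\ref{lem:lower-bound} for a uniform lower bound on $\sup_Q|\langle\cdot,\alpha\rangle|$, and then to reach side length $L$ through the homogeneity $\mu_k(tA)=t^{d_k}\mu_k(A)$ in Step~2. Your relative threshold is scale-invariant: the $|K|=1$ case of Lemma~\ref{lem:carbery-wright} passes to an arbitrary cube $P$ by an affine change of variables, under which the linear form stays a polynomial of degree one. So your estimates hold on all cubes simultaneously, and neither that lemma nor the rescaling step is needed.

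In the reverse direction, the reverse H\"older detour is superfluous. You prove the lower bound $\int_P w\ge\frac12|P|\prod_\alpha(\lambda\sup_P|\langle\cdot,\alpha\rangle|)^{2k_\alpha}$, and on $P$ you have the pointwise bound $w\le\prod_\alpha(\sup_P|\langle\cdot,\alpha\rangle|)^{2k_\alpha}$. Together these give $\sup_P w\le C\mu_k(P)/|P|$, hence $\mu_k(A)\le C\frac{|A|}{|P|}\mu_k(P)$. This is your key estimate with exponent $1$ instead of $1-1/q$, and it yields $\widetilde\gamma=\gamma/C$. It is the same mechanism as the paper's reverse step $|E\cap Q|\ge\mu_k(E\cap Q)/\sup_Q w$. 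Your remarks on integrability of $|\langle\cdot,\alpha\rangle|^{-s}$ and the $m$-fold H\"older inequality can simply be dropped.
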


Next, we introduce an observability inequality, an interpolation inequality, and a spectral inequality for the Dunkl heat equation \eqref{EHEAT}.

\emph{The observability inequality.} A measurable set $E \subset \mathbb{R}^d$ is said to satisfy the observability inequality for equation \eqref{EHEAT} if, for every $T > 0$, there exists a positive constant $C_{\mathrm{obs}} = C_{\mathrm{obs}}(d,R,k,T,E)$ such that whenever $u$ solves \eqref{EHEAT},
\begin{equation}\label{EOBS}
\int_{\mathbb{R}^d} |u(T, x)|^2 \mathrm{d}\mu_k(x) \le C_{\mathrm{obs}} \int_{0}^{T} \int_{E} |u(t, x)|^2 \mathrm{d}\mu_k(x) \mathrm{d}t.
\end{equation}

\emph{The H\"older-type interpolation inequality.} A measurable set $E \subset \mathbb{R}^d$ is said to satisfy the H\"older-type interpolation inequality for the heat equation \eqref{EHEAT} if, for every $\theta \in (0,1)$, there is a constant $C_{\mathrm{Hold}} = C_{\mathrm{Hold}}(d,R,k,E,\theta)$ such that for every $T > 0$ and every solution $u$ of equation \eqref{EHEAT},
\begin{equation}\label{EHOLDER}
\int_{\mathbb{R}^d} |u(T, x)|^2 \mathrm{d}\mu_k(x)
\le e^{C_{\mathrm{Hold}}\left(1+\frac{1}{T}\right)}
\left( \int_{E} |u(T, x)|^2 \mathrm{d}\mu_k(x) \right)^{\theta}
\left( \int_{\mathbb{R}^d} |u(0, x)|^2 \mathrm{d}\mu_k(x) \right)^{1-\theta}.
\end{equation}

\emph{The spectral inequality.} A measurable set $E \subset \mathbb{R}^d$ is said to satisfy the spectral inequality if there is a positive constant $C_{\mathrm{spec}} = C_{\mathrm{spec}}(d,R,k,E)$ such that for every $N > 0$,
\begin{equation}\label{ESPEC}
\int_{\mathbb{R}^d} |f(x)|^2 \mathrm{d}\mu_k(x)
\le e^{C_{\mathrm{spec}}(1+N)}
\int_{E} |f(x)|^2 \mathrm{d}\mu_k(x)
\end{equation}
for every $f\in L_k^2(\mathbb{R}^d)$ with $\supp\mathcal{F}_k f\subset \overline{B(0,N)}$.

When a measurable set $E\subset \mathbb{R}^d$ satisfies \eqref{EOBS}, it is called an observable set for \eqref{EHEAT}.
% [R8-LANG-008] Replaced an informal recovery interpretation by the precise norm-estimate interpretation.
The inequality bounds the terminal-state norm by the observations on $E$ over $(0,T)$ and thus expresses stable determination of the terminal state from these observations.
% [R8-LANG-009] Combined repetitive sentences.
The interpolation inequality \eqref{EHOLDER} gives quantitative unique continuation in the form of H\"older-type propagation of smallness. In fact, if
\[
\int_{E} |u(T,x)|^{2}\mathrm{d}\mu_k(x)=\delta,
\]
then \eqref{EHOLDER} shows that
\[
\int_{\mathbb{R}^{d}} |u(T,x)|^{2}\mathrm{d}\mu_k(x)
\]
is bounded by $C\delta^{\theta}$, where $C$ depends on $T$, $\theta$, $E$, the structural parameters, and the prescribed initial norm. Consequently, $u(T,\cdot)=0$ on $\mathbb{R}^{d}$ whenever it vanishes on $E$.

In the classical case $k\equiv0$, Wang, Wang, Zhang, and Zhang established the equivalence of the three inequalities in \cite{GWW}. We also refer to that work for a detailed discussion from a control-theoretic perspective.

The spectral inequality is closely related to the Logvinenko--Sereda theorem, whose classical form was first established in \cite{LVN}.  More precisely,
 $E$ is $\gamma$-thick at scale $L$ for some $\gamma>0$ and $L>0$ if and only if $E$ satisfies the following inequality: for each $N>0$, there is a positive constant $C(d,E,N)$ such that
\begin{equation}
\int_{\mathbb{R}^{d}} |f(x)|^{2}\mathrm{d}x
\leq C(d,E,N) \int_{E} |f(x)|^{2}\mathrm{d}x
\quad \text{for each } f\in L^{2}(\mathbb{R}^{d}) \text{ with } \supp\widehat{f}\subset \overline{B(0,N)},
\end{equation}
where $\widehat{f}$ denotes the Fourier transform of the function $f$.

Kovrijkine \cite{OK} obtained quantitative bounds in terms of the thickness parameters and the bandwidth. Wang, Wang, Zhang, and Zhang \cite{GWW} used a spectral inequality of the form $e^{C(1+N)}$ in their characterization of observable sets for the classical heat equation. Ghobber and Jaming \cite{SGP} proved a Logvinenko--Sereda theorem for the Fourier--Bessel transform. Xu, Zhao, and Wei \cite{HX} subsequently treated the multidimensional Fourier--Bessel transform, with measure $\mathrm{d}\mu_\nu(x)=\prod_{i=1}^d x_i^{2\nu_i+1}\,\mathrm{d}x$ on $\mathbb{R}_+^d$. % [R8-MATH-P2-02] Made the thickness hypothesis explicit in the introductory spectral estimate.
In the Dunkl setting, we obtain the following spectral inequality whenever $E$ is thick with respect to $\mu_k$:
\begin{equation}
\int_{\mathbb{R}^{d}} |f(x)|^{2}\mathrm{d}\mu_k(x)
\leq e^{C(1+N)} \int_{E} |f(x)|^{2}\mathrm{d}\mu_k(x)
\end{equation}
for every $f\in L_k^2(\mathbb{R}^d)$ with $\supp(\mathcal{F}_k f)\subset\overline{B(0,N)}$,
where $C$ depends only on $d$, $E$, $k$, and $R_+$.
% [R8-LANG-P2-03] Polished the exposition without changing the argument.
The main difficulties are the loss of ordinary translation invariance and the nonconstant weight. A key step in our argument is a Bernstein inequality for ordinary derivatives of Dunkl-band-limited functions. For Fourier transform, referring to \cite{SNB},\cite{REA} or \cite{GEB}, when \(|\beta|=n\), we have 
\[
\|\partial^\beta f\|_{L^2(\mathbb{R}^d)}\leq C N^n\|f\|_{L^2(\mathbb{R}^d)}.
\]
However, due to the influence of singularity variations in the weighted space, we separate the region near the hyperplane from the rest for functions with finite Dunkl spectral bandwidth, and obtain Bernstein inequalities of the following form.

\begin{theorem}\label{TBNST}
Let $N>0$ and $f\in L_k^2(\mathbb{R}^d)$ satisfy $\supp(\mathcal{F}_k f)\subset\overline{B(0,N)}$. For every multi-index $\beta\in\mathbb{N}^d$ with $|\beta|=n$, we have
\begin{equation}
\|\partial^\beta f\|_{L_k^2(\mathbb{R}^d)}\leq C'(d,R_+,k)(C(d,R_+,k)N)^n\|f\|_{L_k^2(\mathbb{R}^d)}.
\end{equation}
\end{theorem}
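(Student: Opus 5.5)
The plan is to reduce to $N=1$ by homogeneity, then prove a local complex-analytic estimate on unit cubes and sum over cubes.

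\emph{Scaling.} The weight $w$ is homogeneous of degree $2\gamma_k=2\sum_{\alpha\in R_+}k_\alpha$. The Dunkl transform commutes with dilations up to a power of the scaling factor: if $f_N(x)=f(x/N)$, then $\supp\mathcal F_k f_N\subset\overline{B(0,N)}$ exactly when $\supp\mathcal F_k f\subset\overline{B(0,1)}$. Both sides of the inequality then scale by the same factor $N^{-(d+2\gamma_k)/2}$, and $\partial^\beta$ contributes the factor $N^{n}$. So it suffices to prove
\[
\|\partial^\beta f\|_{L_k^2}\le C'C^n\|f\|_{L_k^2}
\qquad\text{whenever }\supp\mathcal F_kf\subset\overline{B(0,1)}.
\]
By the Dunkl Paley--Wiener theorem (de Jeu), such $f$ extends to an entire function on $\mathbb C^d$. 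Using Rösler's bound $|E_k(ix,\xi)|\le \max_{g\in G}e^{|\langle g\,\mathrm{Im}\,x,\xi\rangle|}$ and the inversion formula, it satisfies the pointwise growth estimate $|f(x+iy)|\le C e^{|y|}\|\mathcal F_kf\|_{L^1_k(B(0,1))}\le Ce^{|y|}\|f\|_{L_k^2}$.

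\emph{Local Cauchy estimates with a large radius.} For $x\in\mathbb R^d$, apply the Cauchy inequality on the polydisc of radius $\rho=n$ (not radius $1$) centred at $x$:
\[
|\partial^\beta f(x)|\le \frac{\beta!}{n^{n}}\sup_{|z_j-x_j|\le n}|f(z)|.
\]
Since $\beta!/n^n\le 1$, and the imaginary parts contribute at most $e^{\sqrt d\,n}$, this is $C^n$ times a supremum over a complex neighbourhood. What remains is to control that supremum by a \emph{local} weighted $L^2$ norm, namely
\[
\sup_{|z_j-x_j|\le n}|f(z)|^2\le C^n\int_{x+c\,nQ}|f|^2\,\mathrm d\mu_k .
\]
Given this, we square, integrate over $x$ against $\mu_k$, and use Fubini. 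The enlarged cubes $x+cnQ$ overlap with multiplicity $O(n^d)$, and $\mu_k$ is doubling, so the dilation costs at most $(cn)^{d+2\gamma_k}$. Both factors are polynomial in $n$ and are absorbed into $C^n$, which gives the claim.

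\emph{Main obstacle: the local weighted estimate near the hyperplanes.} First, subharmonicity of $|f|^2$ on $\mathbb C^d$ combined with the exponential type in the imaginary directions should give $|f(z)|^2\le C^n\int_{x+cnQ}|f|^2\,\mathrm dx$, i.e.\ a local Plancherel--Pólya inequality in Lebesgue measure. I would prove this by applying the one-variable Phragmén--Lindelöf principle on strips, slice by slice. Away from the hyperplanes $\langle x,\alpha\rangle=0$, at distance $\gtrsim n$, the weight $w$ is comparable on $x+cnQ$ to a constant up to $C^n$, and we conclude directly; this is the ``regular region''. Near the hyperplanes, Lebesgue $L^2$ is not controlled by weighted $L^2$ because $w$ vanishes there. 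Here I would use that $w$ is a product of powers of linear forms and hence an $A_\infty$ weight with uniform constants on every cube. Combined with a Remez/Nazarov-type inequality for restrictions of entire functions of exponential type to a cube, this shows that $\sup_{x+cnQ}|f|$ is controlled by $|f|$ on the subset where $w\ge\delta\,\mu_k(x+cnQ)/|x+cnQ|$. That subset has Lebesgue measure at least half the cube for suitable $\delta$, by $A_\infty$. The Remez constant is of the form $e^{C\cdot(\text{type})\cdot(\text{side})}=e^{Cn}$, which is again of the admissible form $C^n$. Thus the singular region costs only a geometric factor in $n$. Verifying this uniformly in the position of the cube relative to the root hyperplanes is the step I expect to require the most care.
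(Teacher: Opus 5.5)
\textbf{Genuine gap: the local weighted estimate is false.} The step your whole scheme rests on is
\[
\sup_{|z_j-x_j|\le n}|f(z)|^2\le C^n\int_{x+cnQ}|f|^2\,\mathrm{d}\mu_k .
\]
This estimate is false, and not only near the root hyperplanes. It fails for every fixed $n\ge1$, $c>0$ and $C$, and for every $k\ge0$, including Lebesgue measure.

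The reason is that functions with Dunkl bandwidth $1$ approximate arbitrary polynomials locally uniformly on $\mathbb{C}^d$.
\begin{itemize}
\item Take $0\le\chi\in C_c^\infty(B(0,1))$ with $\chi\not\equiv0$, put $\Phi=\widehat{\chi}$, and for a polynomial $P$ set $f_\varepsilon(z)=\Phi(\varepsilon z)P(z)$ with $0<\varepsilon\le1$.
\item By Lemma~\ref{LPYC}, $f_\varepsilon\in\mathcal{S}(\mathbb{R}^d)$ satisfies the growth bounds of Lemma~\ref{PWSDK} with $N=\varepsilon$. Hence $\supp\mathcal{F}_kf_\varepsilon\subset\overline{B(0,1)}$.
\item At the same time $f_\varepsilon\to\Phi(0)P$ locally uniformly on $\mathbb{C}^d$, and $\Phi(0)\neq0$.
\item Choose $P(z)=T_D\bigl(2(z_1-x_1)/(cn)\bigr)$, where $T_D$ is the Chebyshev polynomial. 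Then $|P|\le1$ on $x+cnQ$.
\item But at the point $x+ine_1$ of the polydisc, $|P(x+ine_1)|=|T_D(2i/c)|\ge\frac12(\rho^D-\rho^{-D})$, with $\rho=2/c+\sqrt{1+4/c^2}>1$.
\item Letting $\varepsilon\to0$ and then $D\to\infty$ contradicts the estimate. This holds even in the averaged form with the factor $\mu_k(x+cnQ)^{-1}$.
\end{itemize}

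\textbf{The other local tools fail for the same reason.}
\begin{itemize}
\item No Remez/Nazarov-type inequality on a cube with an $f$-independent constant $e^{Cn}$ can hold for this class, whenever the exceptional region has interior. The Remez ratios of $f_\varepsilon$ converge to those of $P$, which are unbounded in $\deg P$.
\item Kovrijkine's lemma (Lemma~\ref{L1}) needs the growth ratio $M$ as input. In Section~5 that ratio is supplied precisely by the Bernstein inequality, so using it here would be circular.
\item Phragm\'en--Lindel\"of on a strip needs bounds on the entire boundary lines. That is global information, so it cannot give a bound in terms of a norm on a bounded cube.
\end{itemize}

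\textbf{Smaller issues.}
\begin{itemize}
\item Even granting your local inequality, integrating it against $\mathrm{d}\mu_k(x)$ produces the factor $\mu_k(y+cnQ)$, which grows like $|y|^{2\gamma_k}$. You would need the averaged form.
\item The correct dilation is $f(Nx)$, not $f(x/N)$. This slip is harmless.
\end{itemize}

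\textbf{What is missing, and how the paper supplies it.} The proof needs global input: control of $f$ by its global $L_k^2$ norm, either through a decaying kernel or through the spectral side. The paper takes the spectral route and uses no local complex analysis.
\begin{itemize}
\item It writes $\partial_j=T_j-\mathcal{R}_j$ and expands $\partial^\beta$ into at most $2^n$ words in the $T_i$ and $\mathcal{R}_i$.
\item Each $\mathcal{R}_i$ preserves the band limit (Lemma~\ref{LRSS}).
\item The multiplier identity gives $\|T_ig\|_{L_k^2}\le N\|g\|_{L_k^2}$.
\item With $\delta=c_*/N$ it shows
\[
\|\mathcal{R}_ig\|_{L_k^2}\le\tfrac{10}{9}\|\mathcal{R}_ig\|_{L_k^2(\mathbb{R}^d\setminus H_\delta)}\le CN\|g\|_{L_k^2}.
\]
The first inequality is the hyperplane-mass estimate (Proposition~\ref{prop:hyperplane-mass}). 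It is proved by a Schur test with the decay bounds for Dunkl translates, and this is exactly where the global information enters. The second inequality uses $|\langle\alpha,x\rangle|>\delta$ off $H_\delta$ and the $G$-invariance of $\mu_k$.
\end{itemize}

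If you want to keep a pointwise approach, you would have to start from a reproducing formula $f=\eta*_kf$. You would then need derivative bounds for $x\mapsto\tau_x\eta(y)$ with geometric constants in $n$ and decay in $y$. That is a substantially different ingredient, and nothing in your proposal supplies it.
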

 
For suitable examples, the constant $C(d,R_+,k)$ may be close to $1$, with a corresponding increase in $C'(d,R_+,k)$.

When all $k$ are restricted to positive integers, the Logvinenko--Sereda theorem for the Dunkl transform can be obtained by applying the Logvinenko--Sereda theorem and Paley--Wiener theorem of the Fourier transform to  $g = f \prod_{\alpha\in R_+}|\langle x,\alpha \rangle|^{k(\alpha)}$, where all $k$ are positive integers preserving analyticity. This paper establishes that the Logvinenko--Sereda theorem for the Dunkl transform still holds for general $k>0$.

This estimate leads to the following quantitative Logvinenko--Sereda theorem.
\begin{theorem}
\label{thm:main}
Let $E \subset \mathbb{R}^d$ be a measurable set. The following are equivalent:
\begin{enumerate}
\item[(i)] $E$ is relatively dense with respect to $\mu_k$.
\item[(ii)] There exists a constant $C = C(E,d,R_+,k)>0$ such that for every $N>0$ and every $f \in L_k^2(\mathbb{R}^d)$ with $\supp(\mathcal{F}_k f) \subset \overline{B(0,N)}$,
\[
\| f \|_{L_k^2(\mathbb{R}^d)}^2 \leq e^{C(1+N)} \| f \|_{L_k^2(E)}^2.
\]
\end{enumerate}
\end{theorem}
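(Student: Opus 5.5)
For (i)$\Rightarrow$(ii) I will follow Kovrijkine's scheme, with Theorem~\ref{TBNST} replacing the Euclidean Bernstein inequality and Theorem~\ref{TEV} converting $\mu_k$-thickness into Lebesgue thickness.

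\textbf{Step 1 (reduction to Lebesgue thickness).} By Theorem~\ref{TEV}, $E$ is $(\widetilde\gamma,L)$-thick with respect to Lebesgue measure. Tile $\mathbb{R}^d$ by the cubes $Q_j=j L+LQ$, $j\in\mathbb{Z}^d$.

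\textbf{Step 2 (good and bad cubes).} Fix a constant $A>1$ to be chosen. Call $Q_j$ \emph{good} if
\[
\sum_{|\beta|=n}\|\partial^\beta f\|^2_{L_k^2(Q_j)}\le A^{2n}(CN)^{2n}\|f\|^2_{L_k^2(Q_j)}\qquad\text{for all } n\ge1 .
\]
Summing Theorem~\ref{TBNST} over $\beta$ and $j$, the bad cubes carry at most $\sum_n C_d^n A^{-2n}$ times $\|f\|^2_{L_k^2}$. Choosing $A$ large makes this at most $\tfrac12\|f\|_{L_k^2}^2$. Hence it suffices to prove
\[
\|f\|_{L_k^2(Q_j)}^2\le e^{C(1+N)}\|f\|^2_{L_k^2(E\cap Q_j)}\qquad\text{for each good cube } Q_j .
\]

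\textbf{Step 3 (local estimate on a good cube).} This is the main obstacle, because $w$ degenerates on the hyperplanes $\langle x,\alpha\rangle=0$. My first attempt is as follows.

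\emph{Finding a point with controlled derivatives.} On a good cube, the weighted Bernstein bounds give a point $x_0\in Q_j$ with $|\partial^\beta f(x_0)|\le (C'AN)^n\,\|f\|_{L_k^2(Q_j)}/\mu_k(Q_j)^{1/2}$. To get this, I first pass from the weighted $L^2$ control to pointwise control, e.g.\ by a Sobolev embedding on the cube in which the weight is compared with its average.

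\emph{Analytic continuation and Remez.} With these bounds, $f$ extends analytically to a complex polydisc of radius comparable to $L$ around $x_0$, with
\[
\sup|f|\le e^{C(1+N)}\frac{\|f\|_{L_k^2(Q_j)}}{\mu_k(Q_j)^{1/2}} .
\]
Kovrijkine's analytic Remez lemma then yields
\[
\sup_{Q_j}|f|\le\Big(\frac{C}{|E\cap Q_j|}\Big)^{C(1+N)}\sup_{E'}|f|
\]
for any $E'\subset E\cap Q_j$ with $|E'|\ge|E\cap Q_j|/2$.

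\emph{Returning to weighted $L^2$.} Since $w$ is a product of powers of linear forms, $\mu_k$ is doubling and $w\in A_\infty$. I remove from $E\cap Q_j$ the set where $w<\eta\,\mu_k(Q_j)/|Q_j|$. By the $A_\infty$ property this set has small Lebesgue measure, so a subset $E'$ of Lebesgue measure at least $\tfrac12\widetilde\gamma L^d$ remains, on which $w$ is comparable to its average. Then $\|f\|^2_{L_k^2(E')}$ controls $\frac{\mu_k(Q_j)}{|Q_j|}\int_{E'}|f|^2$, and a Chebyshev-type version of Remez (the $L^2$ variant in Kovrijkine) closes the chain of inequalities with all losses of order $e^{C(1+N)}$.

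\emph{Fallback for the pointwise step.} The delicate part is the weighted-to-pointwise passage near the hyperplanes. If the Sobolev embedding is problematic there, I would instead apply Remez on a subcube avoiding a neighborhood of the hyperplanes. Such a subcube carries a fixed proportion of the $\mu_k$-mass, and I would then propagate to the whole cube by analyticity.

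\textbf{Step 4 ((ii)$\Rightarrow$(i)).} I argue by contradiction. If $E$ is not $\mu_k$-thick, then by Theorem~\ref{TEV} it is not Lebesgue thick, so for every $L$ there are cubes $x_L+LQ$ with $|E\cap(x_L+LQ)|\to0$. Take a fixed band-limited $f$ (for instance, the Dunkl transform of a smooth bump supported in $B(0,1)$) translated toward $x_L$ via the Dunkl generalized translation, or use a Dunkl heat-kernel-type bump concentrated near $x_L$. Such an $f$ has most of its $L_k^2$ mass in $x_L+LQ$ for large $L$, while $\|f\|_{L_k^2(E)}$ is small. This contradicts (ii) with $N=1$.

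Generalized translations are not positivity-preserving or explicit, so a cleaner route is to use rescaled functions $f_\lambda(x)=f(\lambda x)$, which keep $\mu_k$-homogeneity, combined with the decay of $f$. Rescaling only moves mass toward or away from the origin, so I would combine it with ordinary Euclidean translations far from the origin, where $w$ is locally almost constant and Dunkl band-limitation can be approximated with a controlled spectral tail.
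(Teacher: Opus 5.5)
\textbf{Steps 1--3 (sufficiency).} These give a workable route to (i)$\Rightarrow$(ii) that differs from the paper's. The paper removes the layer $H_\delta$ with $\delta=c_*/N$, builds convex sets $S_j\subset C_\delta$ and their $G$-orbits, and applies a Sobolev embedding with $\inf_{S_j}w$. You instead use Theorem~\ref{TBNST} globally. Since ordinary derivatives are local, Kovrijkine's good/bad classification then runs on plain cubes, with no $G$-orbits and no hyperplane layer. This also avoids the uniform Sobolev constant on the varying sets $S_j$ and the density transfer to $E\cap S_j^1$.

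Two adjustments are needed.
\begin{itemize}
\item[(a)] The weighted Sobolev embedding is genuinely delicate near the hyperplanes: when $2k_\alpha\ge1$, $w^{-1}\notin L^1_{\mathrm{loc}}$. You do not need it, so your fallback is superfluous. Set $\lambda_n=2^{n+2}(ACN)^{2n}\|f\|^2_{L_k^2(Q_j)}/\mu_k(Q_j)$ and integrate $\sum_\beta|\partial^\beta f|^2/\lambda_{|\beta|}$ against $\mu_k$ over a good cube. This is Kovrijkine's Chebyshev argument, which works for any measure, and it produces $x_0$ directly.
\item[(b)] The uniform $A_\infty$ property of $w$ must be proved. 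For example, apply Carbery--Wright to each linear form $\langle x,\alpha\rangle$ to bound $\frac1{|Q|}\int_Q\log\bigl(\sup_Q|\langle\cdot,\alpha\rangle|/|\langle x,\alpha\rangle|\bigr)\,\mathrm{d}x$ uniformly over all cubes.
\end{itemize}
With these, your return to $L_k^2(E)$ closes as described.

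\textbf{Step 4 (necessity) has a genuine gap.} As written, nothing shows that $\|f\|_{L_k^2(E)}$ is small. A unit-scale bump at $x_L$ only sees $E$ within $O(1)$ of $x_L$ and of its orbit $Gx_L$, and it has height of order $\mu_k(B(x_L,1))^{-1}$. To conclude you would need all of the following, and none is supplied:
\begin{itemize}
\item a choice of sparse cube with $\mu_k(E\cap(x_L+LQ))\ll\mu_k(B(x_L,1))$;
\item control of the mass outside $x_L+LQ$, including near the reflected points $\sigma x_L$, uniformly in $x_L$;
\item crucially, a uniform lower bound $\|\tau_{x_L}\phi\|_{L_k^2}^2\gtrsim\mu_k(B(x_L,1))^{-1}$.
\end{itemize}
The natural proof of the lower bound uses exactly the property you discard. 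Dunkl translates of nonnegative radial functions are nonnegative (R\"osler's representation in Section~\ref{subsec:translation}) and conserve mass.

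Your alternatives fail as well. The Dunkl heat kernel is not band-limited, so (ii) cannot be applied to it. Euclidean translation destroys Dunkl band-limitation, and you give no estimate for the resulting spectral tail. Moreover, $w$ is not locally almost constant far from the origin if the sparse cubes sit near a root hyperplane.

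\textbf{How the paper closes this.} The paper argues directly, without contradiction (Lemma~\ref{thm:necessity}).
\begin{itemize}
\item Take a real radial $\psi\in C_c^\infty(B(0,N/2))$, set $\phi=\mathcal{F}_k^{-1}\psi$ and $\phi_1=c_\phi\phi^2$. Then $\phi_1\ge0$ is radial with unit $\mu_k$-mass and $\supp\mathcal{F}_k\phi_1\subset\overline{B(0,N)}$.
\item Put $f=\chi_{Q_1}*_k\phi_1$ with $Q_0=x_0+LQ\subset Q_1=x_0+(L+M)Q$. This $f$ is band-limited, $0\le f\le1$, and $\|f\|_{L_k^2}^2\le\mu_k(Q_1)$.
\item The uniform tail bound $\sup_x\int_{|x-y|\ge a}\tau_x\phi_1(y)\,\mathrm{d}\mu_k(y)\le C/(1+a)$, from Lemma~\ref{lem:translation-estimate} and \eqref{EWBOUND}, gives $f\ge1-\varepsilon$ on $Q_0$.
\item Applying (ii) with this single $N$ yields $\mu_k(E\cap Q_0)\ge(1-\varepsilon)^2\mu_k(Q_0)-(1-c)\mu_k(Q_1)$, and Lemma~\ref{L4001} finishes.
\end{itemize}
The test function is approximately $1$ on the whole large cube, which is what lets the density of $E$ at scale $L$ enter. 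The needed lower bound comes for free from positivity and mass conservation.
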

% [R8-LANG-P2-05] Polished the exposition without changing the argument.
Together with semigroup arguments, the spectral inequality yields the following characterization, as an application to control theory and characterization of unique continuation.

\begin{theorem}\label{TOHS}
Let $E \subset \mathbb{R}^d$ be a measurable set. Then the following statements are equivalent:
\begin{enumerate}
\item[(i)] The set $E$ is $\gamma$-thick at scale $L$ for some $\gamma > 0$ and $L > 0$ with respect to the Dunkl measure $\mu_k$.
\item[(ii)] The set $E$ satisfies the spectral inequality \eqref{ESPEC}.
\item[(iii)] The set $E$ satisfies the H\"older-type interpolation inequality \eqref{EHOLDER}.
\item[(iv)] The set $E$ satisfies the observability inequality \eqref{EOBS}.
\end{enumerate}
\end{theorem}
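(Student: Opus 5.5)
We prove the cycle (i)$\Rightarrow$(ii)$\Rightarrow$(iii)$\Rightarrow$(iv)$\Rightarrow$(i), following the scheme of \cite{GWW} for the classical heat equation.

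\textbf{(i)$\Rightarrow$(ii).} This is exactly Theorem~\ref{thm:main}, (i)$\Rightarrow$(ii). That result is in turn built on the Bernstein inequality of Theorem~\ref{TBNST} and Kovrijkine-type analytic estimates on good cubes, with Theorem~\ref{TEV} used to transfer thickness between $\mu_k$ and Lebesgue measure.

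\textbf{(ii)$\Rightarrow$(iii).} We use the Dunkl spectral decomposition. By Plancherel for $\mathcal{F}_k$, $\mathcal{F}_k(H_t u_0)(\xi)=e^{-t|\xi|^2}\mathcal{F}_k u_0(\xi)$. For $N>0$, let $P_N$ be the spectral projection onto $\supp\mathcal{F}_k\subset\overline{B(0,N)}$, and write $u(T)=P_Nu(T)+(I-P_N)u(T)$. Then $\|(I-P_N)u(T)\|\le e^{-TN^2}\|u_0\|$. Applying \eqref{ESPEC} to $P_Nu(T)$ gives
\[
\|u(T)\|\le e^{C(1+N)}\|u(T)\|_{L_k^2(E)}+\bigl(1+e^{C(1+N)}\bigr)e^{-TN^2}\|u_0\|.
\]
For $N$ large relative to $1/T$, the factor $e^{C N-TN^2}$ decays. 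Since $CN\le \epsilon TN^2+C^2/(4\epsilon T)$, we obtain, for every $\epsilon>0$,
\[
\|u(T)\|\le e^{C(1+N)}\|u(T)\|_{L^2_k(E)}+e^{C'(1+1/T)}e^{-(1-\epsilon)TN^2}\|u_0\|.
\]
Optimizing over $N$, i.e.\ choosing $TN^2\sim\log(\|u_0\|/\|u(T)\|_E)$, yields an estimate of the form $\|u(T)\|\le e^{C(1+1/T)}\,\delta^{\theta}\|u_0\|^{1-\theta}$ for a fixed $\theta$. To get every $\theta\in(0,1)$, I would use the standard trick of \cite{GWW}: the $e^{CN}$ growth is subquadratic in $N$, which makes the exponent $\theta$ arbitrarily close to $1$ at the cost of a larger constant. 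This bookkeeping is routine, but I expect it to be the fiddliest part of the argument.

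\textbf{(iii)$\Rightarrow$(iv).} We use the telescoping argument of Phung--Wang type. Squaring \eqref{EHOLDER} with $\theta=1/2$ and applying Young's inequality gives, for $0<s<t$,
\[
\|u(t)\|^2\le \epsilon\|u(s)\|^2+\epsilon^{-1}e^{C(1+1/(t-s))}\int_E|u(t)|^2\,\mathrm{d}\mu_k .
\]
We average in $t$ over intervals $(l_{m+1},l_m)$, where $l_m\downarrow0$ is a geometric sequence, and use the monotonicity of $\|u(t)\|$, which holds because the semigroup is contractive. With $\epsilon$ chosen as $e^{-c/(l_m-l_{m+1})}$, the terms telescope and we obtain \eqref{EOBS}.

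\textbf{(iv)$\Rightarrow$(i).} This is the main obstacle, and I would argue by contradiction. Suppose $E$ is not thick with respect to $\mu_k$. By Theorem~\ref{TEV}, $E$ is then not Lebesgue-thick, so for each $n$ there is a cube $x_n+nQ$ with $|E\cap(x_n+nQ)|<n^{-1}|nQ|$.

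The test data would be $u_0=\tau_{x_n}$ applied to a Dunkl heat kernel. Instead of Dunkl translation, which is awkward to work with, I would choose an initial datum $u_0$ concentrated near $x_n$ whose evolution stays mostly inside $x_n+nQ$ up to time $T$. Two facts make this work. First, Gaussian upper bounds for the Dunkl heat kernel of the form $h_t(x,y)\lesssim \mu_k(B(x,\sqrt t))^{-1}e^{-c\,d_G(x,y)^2/t}$, where $d_G$ is the orbit distance, control the leakage away from the cube. Second, $\mu_k$ is doubling, which compares masses on balls and cubes. Together these make $\int_0^T\int_E|u|^2$ small relative to $\|u(T)\|^2$.

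A subtlety is that the kernel spreads to the $G$-orbit of $x_n$. I would handle this by choosing $x_n$ far from all reflecting hyperplanes, or by applying the same construction to the reflected cubes $g(x_n+nQ)$, which has the same effect. The remaining step is to show that $E$ has small $\mu_k$-measure in those cubes as well. When a reflected cube fails this, I would replace $E$ by its $G$-symmetrization and use the $G$-invariance of $\mu_k$.
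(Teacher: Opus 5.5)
Your (i)$\Rightarrow$(ii), (ii)$\Rightarrow$(iii) and (iii)$\Rightarrow$(iv) match the paper (Lemmas~\ref{thm:suff}, \ref{L61}, \ref{lem:obs}): you split at frequency $N$ and balance the subquadratic spectral cost against $e^{-TN^2}$, then telescope over a geometric time sequence.

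The gap is in (iv)$\Rightarrow$(i). The only kernel bound you use, $h_t(x,y)\lesssim \mu_k(B(x,\sqrt t))^{-1}e^{-c\,d_G(x,y)^2/t}$, is unchanged under $y\mapsto\sigma y$. It therefore allows a fixed fraction of the heat released near $x_n$ to sit near $\sigma(x_n)$, so it cannot show that the solution stays mostly in $x_n+nQ$. Neither of your patches repairs this.
\begin{itemize}
\item You cannot choose $x_n$, since the bad cubes are dictated by the failure of thickness. Moreover, distance from the hyperplanes helps only if the kernel bound sees $|x-\sigma x|$, which the $d_G$-Gaussian does not.
\item With $G$-invariant data on $\bigcup_g g(x_n+nQ)$ the $d_G$ bound does suffice. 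But then you need $E$ to be sparse in every reflected cube, which non-thickness does not give.
\item Symmetrizing goes the wrong way. Observability of $E$ passes to $\bigcup_g gE$, but thickness of $\bigcup_g gE$ says nothing about $E$.
\end{itemize}
The closed fundamental Weyl chamber shows the strategy fails. It is not thick, every cube inside another chamber has a reflected copy inside it, and its symmetrization is $\mathbb{R}^d$. So your argument produces no contradiction, although the theorem says this set is not observable.

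The missing ingredient is the refined Dziuba\'nski--Hejna bound used in Lemma~\ref{LFIN}:
\[
h_t(x,y)\le \frac{C}{\mu_k(B(y,\sqrt t))}\Bigl(1+\frac{|x-y|}{\sqrt t}\Bigr)^{-2}e^{-c\,d_G(x,y)^2/t}.
\]
It comes from the factor $\Lambda$ in Lemma~\ref{LBOUND} and says that only a polynomially small amount of heat crosses into reflected regions. Combined with $w(y)\le Ct^{-d/2}\mu_k(B(y,\sqrt t))$ from \eqref{EWBOUND}, it gives the Euclidean tail estimate
\[
\sup_x\sup_{0<t\le1}\int_{|x-y|\ge a}h_t(x,y)\,\mathrm{d}\mu_k(y)\le C(1+a)^{-2},
\]
which also controls leakage into reflected cubes.

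With this estimate the paper needs neither contradiction nor orbit bookkeeping. It takes $u_0=\chi_{x_0+(L+M)Q}$ for an arbitrary center $x_0$, and shows that $u(1)\ge1-q$ on $x_0+LQ$ and that $\int_{(x_0+(L+2M)Q)^c}u(t)^2\,\mathrm{d}\mu_k\le q\,\mu_k(x_0+(L+M)Q)$ for $t\le1$. It inserts these into \eqref{EOBS} with $T=1$. It then applies Lemma~\ref{L4001}, which is stronger than doubling because concentric-cube ratios tend to $1$ uniformly, to get $\mu_k\bigl(E\cap(x_0+(L+2M)Q)\bigr)\ge\mu_k(x_0+(L+2M)Q)/(4C_{\mathrm{obs}})$.

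Your contradiction scheme would also work once this tail bound replaces the $d_G$-Gaussian, with doubling sufficing there. Finally, passing through Theorem~\ref{TEV} to Lebesgue non-thickness is counterproductive, since the observation term is weighted; negate $\mu_k$-thickness directly.
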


The main contributions of this paper are threefold. First, the Bernstein estimate for ordinary derivatives under the restriction of the Dunkl spectral support. Second, a local propagation small quantity that avoids weight degeneracy while preserving the linear bandwidth exponent. Third, the characterization of observable sets obtained from spectral inequalities and heat kernel localization.

% [R8-LANG-011] Condensed the paper outline.
Section~2 collects the required preliminaries. Section~3 proves the equivalence of the two notions of thickness. Sections~4 and~5 establish, respectively, the necessity and sufficiency in the Logvinenko--Sereda theorem. Section~6 completes the proof of Theorem~\ref{TOHS}.

\section{Notation and Preliminaries}

% [R8-LANG-P2-06] Polished the exposition without changing the argument.
We recall the definitions and properties of Dunkl analysis used below; see \cite{MFE1,MR1,ST1,JDA1} for further background.

\subsection{Dunkl theory}

Let $R \subset \mathbb{R}^d \setminus \{0\}$ be a normalized root system, and let $R_+$ be a fixed positive subsystem. For $\alpha \in R$, the reflection $\sigma_\alpha$ in the hyperplane orthogonal to $\alpha$ is given by
\[
\sigma_\alpha(x) = x - 2\frac{\langle \alpha,x\rangle}{|\alpha|^2}\alpha.
\]
The finite group generated by these reflections is denoted by $G$. Throughout this paper, the multiplicity function $k:R\to(0,\infty)$ is $G$-invariant. We set $k_\alpha = k(\alpha)$ for $\alpha \in R$.

The weight function (or Dunkl weight) is
\[
w(x) := \prod_{\alpha \in R_+} |\langle \alpha,x\rangle|^{2k_\alpha}, \qquad x \in \mathbb{R}^d.
\]
The associated measure is $\mathrm{d}\mu_k(x) = w(x)\,\mathrm{d}x$. We set $\gamma_k:=\sum_{\alpha\in R_+}k_\alpha$ and denote the homogeneous dimension by $d_k:=d+2\gamma_k$. Then
\[
\mu_k(B(tx, tr)) = t^{d_k} \mu_k(B(x, r)) \quad \text{for all } x\in\mathbb{R}^d,\ t,r>0,
\]

By \cite{JDA2} or \cite{JDA3}, there is a constant $C>0$ such that for all $x\in\mathbb{R}^d$ and $r>0$,
\begin{equation}\label{EWBOUND}
C^{-1} \mu_k(B(x,r))
% [R8-NOTATION-G002] Unified multiplicity/derivative/space/index notation or corrected a typographical inconsistency.
\le r^{d} \prod_{\alpha\in R_+} \big(|\langle x,\alpha\rangle| + r\big)^{2k_\alpha}
\le C \mu_k(B(x,r)),
\end{equation}

For fixed $y\in\mathbb{R}^d$, the Dunkl kernel $E_k(x,y)$ is the unique analytic solution to the system
\[
T_j f = y_j f,\quad j=1,\dots,d,\qquad f(0)=1,
\]
where $T_j$ are the Dunkl operators (see \cite{CFD1}),
\begin{equation}\label{ETJ}
T_jf(x)=\partial_j f(x)+\sum_{\alpha \in R}\frac{k_\alpha}{2}\langle \alpha,e_j\rangle \frac{f(x)-f(\sigma_\alpha(x))}{\langle \alpha,x\rangle}
\end{equation}
We denote the reflection correction in the $j$th Dunkl operator by $\mathcal{R}_j$:
\begin{equation}\label{ERH2}
\mathcal{R}_jf(x)=T_jf(x)-\partial_j f(x)=\sum_{\alpha \in R}\frac{k_\alpha}{2}\langle \alpha,e_j\rangle \frac{f(x)-f(\sigma_\alpha(x))}{\langle \alpha,x\rangle}.
\end{equation}
 The kernel $E_k(x,y)$ extends holomorphically to $\mathbb{C}^d \times \mathbb{C}^d$ and satisfies the following fundamental properties:
\begin{enumerate}
\item[(i)] $E_k(x,y) = E_k(y,x)$ for all $x,y \in \mathbb{R}^d$.
\item[(ii)] $E_k(tx,y) = E_k(x,ty)$ for all $t \in \mathbb{C}$.
\item[(iii)] $|E_k(ix,y)| \leq 1$ for all $x,y \in \mathbb{R}^d$.

\end{enumerate}

The Dunkl transform of $f \in L_k^1(\mathbb{R}^d)$ is defined by
\[
\mathcal{F}_k f(\xi) := c_k \int_{\mathbb{R}^d} f(x) E_k(-i\xi, x) \, \mathrm{d}\mu_k(x),
\]
where
\[
c_k=\left(\int_{\mathbb{R}^d}e^{-|x|^2/2}\,\mathrm{d}\mu_k(x)\right)^{-1}.
\]
With this convention, $\mathcal{F}_k$ is an isometry on $L_k^2(\mathbb{R}^d)$. The inverse transform is given by
\[
\mathcal{F}_k^{-1} f(x) = c_k \int_{\mathbb{R}^d} f(\xi) E_k(i x, \xi) \, \mathrm{d}\mu_k(\xi) = \mathcal{F}_k f(-x).
\]
% [R8-REF-012] Cited the precise source for the spherical Paley--Wiener theorem.
We recall the Plancherel identity from \cite{MFE1} and the spherical Paley--Wiener theorem from \cite[Theorem~4.10]{MFE2}.

 \emph{Plancherel identity.} $\| \mathcal{F}_k f \|_{L_k^2} = \| f \|_{L_k^2}$ for all $f \in L_k^2(\mathbb{R}^d)$.
\begin{lemma}\label{PWSDK}
% [R8-MATH-013] Corrected the misquoted Paley--Wiener statement: Schwartz f and decay in z, not N.
(Paley--Wiener) Let $N>0$ and $f\in\mathcal{S}(\mathbb{R}^d)$. Then $\supp(\mathcal{F}_k f)\subset\overline{B(0,N)}$ if and only if $f$ extends to an entire function on $\mathbb{C}^d$ such that, for every $m\in\mathbb{N}$, there is a constant $C_m>0$ satisfying
\[
|f(z)|\le C_m(1+|z|)^{-m}e^{N|\operatorname{Im}z|},\qquad z\in\mathbb{C}^d.
\]
\end{lemma}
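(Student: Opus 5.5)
The plan is to prove both implications by reducing to the Euclidean Paley--Wiener theorem, with the forward direction handled directly. Throughout, set $h:=\mathcal{F}_k f$. Since $\mathcal{F}_k$ is a topological isomorphism of $\mathcal{S}(\mathbb{R}^d)$, $h$ is again a Schwartz function, and the inversion formula gives
\[
f(x)=c_k\int_{\mathbb{R}^d}h(\xi)E_k(ix,\xi)\,\mathrm{d}\mu_k(\xi).
\]

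\emph{Necessity.} Assume $\supp h\subset\overline{B(0,N)}$. Then the integral above runs over a compact set, and it extends to all $z\in\mathbb{C}^d$ because $E_k$ is entire.
\begin{enumerate}
\item[(a)] I will use R\"osler's estimate $|E_k(iz,\xi)|\le e^{|\operatorname{Im}z|\,|\xi|}$ for $z\in\mathbb{C}^d$ and $\xi\in\mathbb{R}^d$. Together with differentiation under the integral sign, this shows that $f$ is entire and $|f(z)|\le C\,e^{N|\operatorname{Im}z|}$.
\item[(b)] To obtain the polynomial decay, use the eigenvalue equation $T_j^{\xi}E_k(iz,\xi)=iz_jE_k(iz,\xi)$. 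Since $h\in C_c^\infty$ (or at least Schwartz with compact support), I will move the Dunkl operators onto $h$ using the antisymmetry $\int (T_jh)g\,\mathrm{d}\mu_k=-\int h\,T_jg\,\mathrm{d}\mu_k$. This gives
\[
z^\gamma f(z)=c_k\int \bigl((iT)^\gamma h\bigr)(\xi)\,E_k(iz,\xi)\,\mathrm{d}\mu_k(\xi),
\]
up to signs.
\item[(c)] The operators $T_j$ do not enlarge $G$-invariant supports, so $(iT)^\gamma h$ is still supported in $\overline{B(0,N)}$. Applying the bound from (a) to it and summing over $|\gamma|\le m$ yields $(1+|z|)^m|f(z)|\le C_m e^{N|\operatorname{Im}z|}$.
\end{enumerate}

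\emph{Sufficiency.} Here I will transfer the problem to the Euclidean Fourier transform through the intertwining operator $V_k$ and its dual ${}^tV_k$.
\begin{enumerate}
\item[(a)] Write $E_k(ix,\xi)=V_k\bigl(e^{i\langle x,\cdot\rangle}\bigr)(\xi)$ and use duality. This gives
\[
f(x)=c\int_{\mathbb{R}^d}g(y)\,e^{i\langle x,y\rangle}\,\mathrm{d}y,\qquad g:={}^tV_k(h\,w),
\]
so $f$ is, up to a constant, the Euclidean inverse Fourier transform of $g$.
\item[(b)] By Trimèche's results, ${}^tV_k$ is a topological automorphism of the appropriate Schwartz-type space, so $g$ is a Schwartz-class function or at least a tempered distribution.
\item[(c)] The hypothesis says that $f$ is entire of exponential type $N$ with rapid decay on $\mathbb{R}^d$. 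The classical Paley--Wiener--Schwartz theorem then gives $\supp g\subset\overline{B(0,N)}$.
\item[(d)] To conclude, I need Trimèche's support theorem: for every $r>0$, $\supp({}^tV_k\varphi)\subset\overline{B(0,r)}$ if and only if $\supp\varphi\subset\overline{B(0,r)}$. Applied with $\varphi=hw$, it gives $\supp h\subset\overline{B(0,N)}$, since $w>0$ almost everywhere.
\end{enumerate}

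\emph{Main obstacle.} The delicate step is the ``only if'' half of the support theorem in (d), namely that ${}^tV_k$ cannot shrink supports. The easy half follows from R\"osler's positive integral representation of $V_k$, whose representing measures are supported in the convex hull of the $G$-orbit, which lies inside the ball. For the hard half, I would first try to invert ${}^tV_k$ on $G$-invariant balls through the Euclidean and Dunkl Fourier transforms. Alternatively, I would use a real Paley--Wiener characterization via $\limsup_{m}\|\Delta_k^m f\|_{L^2_k}^{1/(2m)}$. If this proves too technical, I will simply invoke \cite[Theorem~4.10]{MFE2}, which is the source of the statement.
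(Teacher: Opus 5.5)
\textbf{Necessity.} Your proof of this half is correct and self-contained. R\"osler's bound $|E_k(iz,\xi)|\le e^{|\xi|\,|\operatorname{Im}z|}$, the transfer of $T_j$ onto $h$ by antisymmetry, and the fact that $T_j$ preserves smooth functions supported in the $G$-invariant ball $\overline{B(0,N)}$ give exactly the stated bound. The paper proves neither direction; it simply quotes the lemma from \cite[Theorem~4.10]{MFE2}.

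\textbf{Sufficiency.} Here there is a genuine gap, and it is the whole content of the lemma. Since $f\in\mathcal S(\mathbb R^d)$, the identity $f(x)=c\int g(y)e^{i\langle x,y\rangle}\,\mathrm dy$ forces $g$ to be a constant multiple of $\widehat f$. So step (b) is unnecessary, and (a)--(c) merely restate the Euclidean Paley--Wiener theorem, $\supp\widehat f\subset\overline{B(0,N)}$. What remains is to show that, for Schwartz $f$, $\supp\widehat f\subset\overline{B(0,N)}$ implies $\supp\mathcal F_kf\subset\overline{B(0,N)}$. The ``only if'' half of the support theorem in (d), applied to $\varphi=hw$ with $h=\mathcal F_kf$ known only to be Schwartz (not compactly supported), is exactly this implication. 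On the functions at hand it is equivalent to the lemma, so invoking it amounts to invoking the lemma in another form, and (a)--(d) reduce nothing.

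Neither of your proposed routes closes the gap. Inverting ${}^tV_k$ through the Euclidean and Dunkl transforms requires knowing that $\mathcal F_k u$ is supported in $\overline{B(0,r)}$ whenever $\widehat u$ is, which is again the Dunkl Paley--Wiener theorem. The real Paley--Wiener identity $\lim_{m\to\infty}\|\Delta_k^mf\|_{L_k^2}^{1/(2m)}=\sup\{|\xi|:\xi\in\supp\mathcal F_kf\}$ follows at once from Plancherel. However, it only moves the difficulty to proving $\limsup_{m}\|\Delta_k^mf\|_{L_k^2}^{1/(2m)}\le N$ for Euclidean band-limited $f$. That needs control of the quotients $(f(x)-f(\sigma_\alpha x))/\langle\alpha,x\rangle$ with no loss in the exponential type. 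The obvious estimate writes $f(x)-f(\sigma_\alpha x)$ as an integral of a directional derivative along the segment from $\sigma_\alpha x$ to $x$ and then applies Bernstein's inequality. This loses a factor larger than $1$ at each application of $T_j$, so at best it gives $\supp\mathcal F_kf\subset\overline{B(0,C'N)}$ for some $C'=C'(d,R,k)>1$. The exact radius matters: the paper uses the lemma in Lemma~\ref{LRSS}, which is iterated in Lemma~\ref{LBS2}. As written, your sufficiency half is therefore a citation rather than a proof. If you fall back on \cite[Theorem~4.10]{MFE2}, you are doing exactly what the paper does, and steps (a)--(d) can simply be dropped.
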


% [R8-REF-014] Avoided attributing an unspecified distributional Paley--Wiener theorem to the convolution-equations reference.
Further Paley--Wiener results are given in \cite{MFE2}, and distributional aspects of Dunkl analysis are treated in \cite{KT1}. For the classical Fourier-transform statement below, see \cite{LH1}.
\begin{lemma}\label{LPYC}
Let $K\subset\mathbb{R}^d$ be compact and convex, with support function $H(\eta)=\sup_{x\in K}\langle x,\eta\rangle$. If $u\in C_c^\infty(\mathbb{R}^d)$ and $\supp u\subset K$, then for every $m\in\mathbb{N}$ there exists $C_m>0$ such that
\[
|\widehat{u}(z)|\leq C_m(1+|z|)^{-m}e^{H(\operatorname{Im} z)}, \qquad z\in \mathbb{C}^d.
\]
\end{lemma}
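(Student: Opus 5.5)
This is Lemma~\ref{LPYC}, the classical Paley--Wiener--Schwartz estimate for smooth compactly supported functions, and I would prove it directly from the definition of the Fourier transform. Fix $u\in C_c^\infty(\mathbb{R}^d)$ with $\supp u\subset K$. For $z=\xi+i\eta\in\mathbb{C}^d$ set
\[
\widehat u(z)=\int_K u(x)e^{-i\langle x,z\rangle}\,\mathrm{d}x.
\]
The integrand is entire in $z$ and the domain of integration is compact, so differentiation under the integral sign shows that $\widehat u$ is entire. Moreover,
\[
|e^{-i\langle x,z\rangle}|=e^{\langle x,\eta\rangle}\le e^{H(\eta)}, \qquad x\in K,
\]
which immediately gives the case $m=0$: $|\widehat u(z)|\le \|u\|_{L^1}e^{H(\operatorname{Im}z)}$.

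For polynomial decay I would integrate by parts. For a multi-index $\beta$ we have
\[
z^\beta\widehat u(z)=\int_K (-i)^{-|\beta|}\,\partial^\beta u(x)\,e^{-i\langle x,z\rangle}\,\mathrm{d}x.
\]
There are no boundary terms, because $u$ is compactly supported inside $K$; all derivatives of $u$ vanish near the boundary of its support. Consequently
\[
|z^\beta\widehat u(z)|\le \|\partial^\beta u\|_{L^1}\,e^{H(\operatorname{Im}z)}.
\]

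To combine these bounds I would use the elementary inequality
\[
(1+|z|)^m\le C(d,m)\sum_{|\beta|\le m}|z^\beta|,
\]
where $|z|^2=\sum_j|z_j|^2$. It follows from $(1+|z|)^m\le 2^m\max(1,|z|^m)$ together with
\[
|z|^m\le d^{m/2}\max_j|z_j|^m=d^{m/2}\max_j|z_j^{m}|,
\]
and the monomial $z_j^m$ is of the form $z^\beta$ with $|\beta|=m$. Summing the bounds over $|\beta|\le m$ then gives the lemma with
\[
C_m=C(d,m)\sum_{|\beta|\le m}\|\partial^\beta u\|_{L^1(\mathbb{R}^d)}.
\]

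No step is genuinely difficult. The only point needing care is the bound $e^{\langle x,\eta\rangle}\le e^{H(\eta)}$. This bound uses only that $\supp u\subset K$ and the definition of $H$; convexity of $K$ plays no role in this direction of the theorem.
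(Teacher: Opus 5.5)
Your argument is correct and is the standard integration-by-parts proof of this direction of the Paley--Wiener--Schwartz theorem from H\"ormander's book; the paper cites that source and gives no proof of its own. There is one harmless slip: since $\widehat{\partial_j u}(z)=iz_j\widehat u(z)$, the prefactor should be $(-i)^{|\beta|}$ rather than $(-i)^{-|\beta|}$, and the absence of boundary terms is best justified by integrating over all of $\mathbb{R}^d$, where $u$ has compact support; neither point affects the resulting modulus bound.
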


\subsection{Dunkl translation}\label{subsec:translation}

The generalized translation operator $\tau_y$ is defined on $L_k^2(\mathbb{R}^d)$ by
\[
\mathcal{F}_k(\tau_y f)(x) = E_k(y, -i x) \mathcal{F}_k f(x), \qquad x \in \mathbb{R}^d.
\]
Equivalently, for $f \in L_k^1(\mathbb{R}^d) \cap L_k^2(\mathbb{R}^d)$ with $\mathcal{F}_k f \in L_k^1(\mathbb{R}^d)$,
\[
\tau_y f(x) = c_k \int_{\mathbb{R}^d} E_k(i x, \xi) E_k(-i y, \xi) \mathcal{F}_k f(\xi) \, \mathrm{d}\mu_k(\xi).
\]
For a radial function $f(x)=\widetilde f(|x|)$, R\"osler's positive representation \cite{MR3}, expressed in the present negative-translation convention, gives
\[
\tau_y f(x)=\int_{\operatorname{conv}(Gx)}
\widetilde f\!\left(\sqrt{|x|^2+|y|^2-2\langle\eta,y\rangle}\right)\,\mathrm{d}\nu_x(\eta),
\]
where $\nu_x$ is a probability measure supported in $\operatorname{conv}(Gx)$. In particular, radial nonnegative functions have nonnegative translations. Notice that at $k=0$ our convention is $\tau_y f(x)=f(x-y)$; some references use the opposite sign.

The support property needed below is given by \cite[Theorem~1.7]{JDA2}:

If $f \in L_k^2(\mathbb{R}^d)$ and $\supp f \subset \overline{B(0,r)}$, then for every $x \in \mathbb{R}^d$,
\[
\supp \tau_x f \subset \bigcup_{\sigma \in G} \overline{B(\sigma(x), r)}.
\]

% [R8-LANG-P2-07] Polished the exposition without changing the argument.
The next estimate is the radial Schwartz specialization of \cite[Theorem~4.1 and Remark~4.2]{JDA3}, written in our translation convention.
\begin{lemma}\label{LPY}
Assume that $k_\alpha>0$ for every root. Let $g\in\mathcal{S}(\mathbb{R}^d)$ be radial and set $g_t(x)=t^{-d_k}g(x/t)$. For every $M>0$ there exists $C_{g,M}>0$ such that
\[
|\tau_xg_t(y)|\le\frac{C_{g,M}}{\mu_k(B(x,t))}
\left(1+\frac{|x-y|}{t}\right)^{-1}
\left(1+\frac{d_G(x,y)}t\right)^{-M},\qquad t>0,
\]
% [R8-NOTATION-INLINE-01] Standardized inline mathematics to dollar delimiters.
where $d_G(x,y)$ denotes the $G$-invariant distance,
\begin{equation}\label{EDD}
d_G(x,y)=\min_{\sigma\in G}|x-\sigma y|.
\end{equation}
The constants may depend on the relevant Schwartz seminorms of $g$.
\end{lemma}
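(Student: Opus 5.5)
The plan is to reduce to $t=1$ by scaling, then use positivity of Dunkl translations of radial functions to dominate $g$ by a superposition of Gaussians. The translates of those Gaussians are Dunkl heat kernels, and these satisfy the sharp bound of Dziubański–Hejna \cite{JDA2,JDA3}. The resulting estimate would be essentially a re-derivation of \cite[Theorem~4.1]{JDA3}, so I would take that heat kernel bound as the input.

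\emph{Scaling.} Since $E_k(tx,y)=E_k(x,ty)$ and $\mathcal{F}_k(g_t)(\xi)=\mathcal{F}_k g(t\xi)$, I expect $\tau_x g_t(y)=t^{-d_k}(\tau_{x/t}g)(y/t)$. I would confirm this either through the definition via $\mathcal{F}_k$ or directly from Rösler's formula. Together with $\mu_k(B(x,t))=t^{d_k}\mu_k(B(x/t,1))$, $|x-y|/t=|x/t-y/t|$ and $d_G(x,y)/t=d_G(x/t,y/t)$, this reduces the claim to $t=1$.

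\emph{Domination by Gaussians.} Write $g(x)=\widetilde g(|x|)$. Rösler's representation gives $|\tau_x g(y)|\le \int|\widetilde g|\,\mathrm{d}\nu\le \tau_x G(y)$ for any radial $G$ with $|\widetilde g|\le \widetilde G$. Fix $K$ large and note $|\widetilde g(r)|\le C_K(1+r)^{-K}$. The elementary dyadic bound
\[
(1+r)^{-K}\le C\sum_{j\ge0}2^{-jK}e^{-r^2/4^{j}}
\]
then yields $|\tau_xg(y)|\le C\sum_j 2^{-jK}\tau_x\big(e^{-|\cdot|^2/4^j}\big)(y)$.

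\emph{Heat kernel input and summation.} Up to a constant, $e^{-|z|^2/4^{j}}$ equals $s^{d_k/2}h_s(z)$ with $s=4^{j-1}$, where $h_s$ is the Dunkl heat kernel at the origin. Its translate is $h_s(x,y)$, and Dziubański–Hejna give
\[
h_s(x,y)\le \frac{C}{\mu_k(B(x,\sqrt s))}\Big(1+\tfrac{|x-y|}{\sqrt s}\Big)^{-2}e^{-c\,d_G(x,y)^2/s}.
\]
With $\sqrt s\sim 2^j\ge1$ I would use three bounds:
\begin{itemize}
\item $s^{d_k/2}/\mu_k(B(x,\sqrt s))\lesssim 2^{jd_k}/\mu_k(B(x,1))$, using $\mu_k(B(x,r))\ge \mu_k(B(x,1))$ for $r\ge1$;
\item $(1+|x-y|/2^j)^{-2}\le 4^{j}(1+|x-y|)^{-2}\le 4^j(1+|x-y|)^{-1}$;
\item $e^{-c d_G^2/4^j}\le C_M2^{jM}(1+d_G)^{-M}$.
\end{itemize}
The $j$-th term is then at most $C\,2^{-j(K-d_k-2-M)}$ times the desired right-hand side. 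Choosing $K>d_k+M+2$ makes the series converge, and the constant depends only on finitely many Schwartz seminorms of $g$.

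The main obstacle is the heat kernel upper bound itself, in particular the simultaneous factors $1/\mu_k(B(x,\sqrt s))$ (rather than a symmetrized volume) and $(1+|x-y|/\sqrt s)^{-2}$. Its proof in \cite{JDA2,JDA3} uses the strict positivity $k_\alpha>0$ and delicate analysis of the measures $\nu_x$, so I would cite it rather than reprove it. The remaining point to verify is the sign convention: under our negative-translation convention, the heat-kernel translate must be $h_s(x,y)$ with the roles of $x$ and $y$ as stated. Symmetry of $h_s$ makes this harmless.
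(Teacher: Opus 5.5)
Your argument is correct, but it follows a different route from the paper, which gives no proof of this lemma: it imports the estimate as the radial Schwartz case of \cite[Theorem~4.1 and Remark~4.2]{JDA3}.

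You instead derive it from the Dunkl heat kernel upper bound. The steps are R\"osler's positive radial formula, the dyadic Gaussian majorant of $(1+r)^{-K}$, and scaling. The heat kernel input is already available in the paper. Lemma~\ref{LBOUND} gives it once you note two facts about $\Lambda$: every nonempty admissible $\rho_{\boldsymbol\alpha}$ contains the factor $(1+|x-y|/\sqrt t)^{-2}$, and the empty sequence occurs only when $d_G(x,y)=|x-y|$, where the Gaussian absorbs that factor. This is the form the paper itself uses in the proof of Lemma~\ref{LFIN}.

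Your route therefore makes Lemma~\ref{LPY} a corollary of the heat kernel bounds rather than a separate black box. It also yields more than is stated: you get the factor $(1+|x-y|/t)^{-2}$ instead of $(1+|x-y|/t)^{-1}$. The constant depends only on $\sup_x(1+|x|)^K|g(x)|$ for a single $K>d_k+M+2$, with no derivatives of $g$. The citation is simply shorter.

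Two small repairs are needed.
\begin{enumerate}
\item For $j=0$ you have $\sqrt s=1/2$, not $\sqrt s\ge 1$. Bound $\mu_k(B(x,1/2))^{-1}$ using the doubling property implied by \eqref{EWBOUND}, not monotonicity.
\item The identification $\tau_x\bigl(h_s(\cdot,0)\bigr)(y)=h_s(x,y)$ should be checked through the transform. Since $\mathcal F_k(h_s(\cdot,0))=c_k e^{-s|\xi|^2}$, the paper's convention gives $\tau_x(h_s(\cdot,0))(y)=c_k^2\int e^{-s|\xi|^2}E_k(iy,\xi)E_k(-ix,\xi)\,\mathrm{d}\mu_k(\xi)=h_s(y,x)$. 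This is the kernel of $H_s$. Symmetry of $h_s$ alone only lets you swap $x$ and $y$. It does not exclude a translate of the form $h_s(x,-y)$, and $d_G(x,-y)\neq d_G(x,y)$ in general when $-\mathrm{id}\notin G$.
\end{enumerate}
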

Taking $t=1$ gives the following estimate:
\begin{lemma}\label{lem:translation-estimate}
Under the same multiplicity assumption, for every radial $\phi\in\mathcal{S}(\mathbb{R}^d)$ and $M>0$,
\[
|\tau_x\phi(y)|\le\frac{C_{\phi,M}}{\mu_k(B(x,1))}
(1+|x-y|)^{-1}(1+d_G(x,y))^{-M}.
\]
\end{lemma}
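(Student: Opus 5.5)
The statement to prove is Lemma~\ref{lem:translation-estimate}. I would obtain it as the special case $t=1$ of Lemma~\ref{LPY}. The only point to check is the normalization: with $g=\phi$ and $t=1$ we have $g_1(x)=1^{-d_k}\phi(x/1)=\phi(x)$, so $\tau_x g_1=\tau_x\phi$ identically on $\mathbb{R}^d$.

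The plan has three steps. First, I verify the hypotheses of Lemma~\ref{LPY}: the multiplicity assumption is the same, and $\phi$ is radial and belongs to $\mathcal{S}(\mathbb{R}^d)$. Second, I fix $M>0$ and let $C_{g,M}$ be the constant that Lemma~\ref{LPY} provides for $g=\phi$. Third, I substitute $t=1$ into the inequality of Lemma~\ref{LPY}. The factors $(1+|x-y|/t)^{-1}$ and $(1+d_G(x,y)/t)^{-M}$ become $(1+|x-y|)^{-1}$ and $(1+d_G(x,y))^{-M}$, and $\mu_k(B(x,t))$ becomes $\mu_k(B(x,1))$. Setting $C_{\phi,M}:=C_{g,M}$ then gives the claimed bound. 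This constant depends only on $M$ and on finitely many Schwartz seminorms of $\phi$, as stated in Lemma~\ref{LPY}.

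No step here is a genuine obstacle. The one thing to watch is the translation convention: it must be the same negative-translation convention used in Lemma~\ref{LPY}. This is automatic, because both lemmas are written in the convention fixed in Section~\ref{subsec:translation}. For the same reason, the two-sided pointwise bound transfers verbatim, with no symmetrization in $x,y$ needed.
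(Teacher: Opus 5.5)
Your proposal is correct and is exactly the paper's argument: the paper obtains this lemma by setting $t=1$ in Lemma~\ref{LPY}, where $g_1=\phi$ and the bound specializes verbatim to the stated one. Your check of the normalization is the only verification needed.
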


For radial functions, Dunkl translation is contractive on $L_k^p$ for $1\le p\le\infty$; see \cite{ST1}:
\begin{lemma}
\label{lem:radial-translation}
If $f \in L_k^p(\mathbb{R}^d)$ is radial, then $\tau_y f \in L_k^p(\mathbb{R}^d)$ for every $y \in \mathbb{R}^d$, and
\[
\| \tau_y f \|_{L_k^p} \leq \| f \|_{L_k^p}.
\]
\end{lemma}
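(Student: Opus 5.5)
The last statement is Lemma~\ref{lem:radial-translation}: for radial $f\in L_k^p$, $\|\tau_y f\|_{L_k^p}\le\|f\|_{L_k^p}$ for $1\le p\le\infty$. I would prove it by writing $\tau_y f$ as an integral of $f$ against a positive kernel of total mass one, using R\"osler's positive representation for radial functions, and then applying duality and Jensen.

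\textbf{Kernel representation.} Write $f(x)=\widetilde f(|x|)$. R\"osler's representation gives
\[
\tau_y f(x)=\int \widetilde f\bigl(\sqrt{|x|^2+|y|^2-2\langle\eta,y\rangle}\bigr)\,\mathrm{d}\nu_x(\eta),
\]
with $\nu_x$ a probability measure. I would first work with $f\in\mathcal S(\mathbb{R}^d)$ radial and nonnegative, and then extend to general $f$ by density and by splitting $f$ into real and imaginary, positive and negative parts.

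\textbf{Step 1: the case $p=\infty$.} This is immediate from the formula. The argument of $\widetilde f$ is a Euclidean length, so $|\tau_y f(x)|\le\|\widetilde f\|_\infty\,\nu_x(\mathrm{conv}(Gx))=\|f\|_\infty$.

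\textbf{Step 2: the case $p=1$.} Use the symmetry $\int \tau_y f\,g\,\mathrm{d}\mu_k=\int f\,\tau_{-y}g\,\mathrm{d}\mu_k$, which holds on Schwartz functions by Plancherel and the fact that $E_k(y,-ix)$ is conjugated by $y\mapsto -y$. Take $g\equiv 1$, approximated by radial Gaussians $e^{-\varepsilon|x|^2}$. Their translates are nonnegative and bounded by $1$ by Step 1, and they tend to $1$ pointwise. Since $\tau_y f\ge 0$ for $f\ge0$ radial, we get $\|\tau_yf\|_{L^1_k}=\int \tau_y f\,\mathrm{d}\mu_k=\int f\,\mathrm{d}\mu_k$. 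The pointwise limit $\tau_{-y}e^{-\varepsilon|\cdot|^2}\to1$ follows from the representation by dominated convergence.

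\textbf{Step 3: intermediate $p$.} Riesz--Thorin interpolation is not directly available, because the radial functions are not an $L^p$-scale domain for a single operator that is positive on all functions. I would instead use Jensen pointwise, which the probability representation allows: $|\tau_y f(x)|^p\le \tau_y(|f|^p)(x)$. Since $|f|^p$ is radial and in $L^1_k$, integrating and using Step 2 gives $\|\tau_yf\|_p^p\le\||f|^p\|_1=\|f\|_p^p$.

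\textbf{Extension and main obstacle.} Finally, extend from Schwartz functions to general radial $L^p_k$ functions by monotone and density arguments, which defines $\tau_y f$ consistently with the $L^2$ definition when $p=2$. I expect the main obstacle to be justifying the $L^1$ identity, namely the duality and the pointwise limit of the Gaussian translates. This relies on the positivity of the representation, and on the fact that the convention with $-y$ still yields a probability measure.
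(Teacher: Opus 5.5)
Your argument is correct. The paper does not prove this lemma; it only quotes it from \cite{ST1}. What you give is the standard self-contained proof behind that citation. Positivity and unit mass of R\"osler's representation give the $L^\infty$ bound and the pointwise Jensen inequality $|\tau_y f|^p\le\tau_y(|f|^p)$. Conservation of mass then closes the argument; you obtain it from the duality $\int\tau_y f\,g\,\mathrm{d}\mu_k=\int f\,\tau_{-y}g\,\mathrm{d}\mu_k$, which is valid in the paper's convention, with Gaussians $g_\varepsilon\uparrow 1$, using monotone convergence on the left and dominated convergence on the right.

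A few points should be tightened.

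\emph{Mass conservation for non-Schwartz functions.} Step~2 is proved only for nonnegative radial Schwartz functions, but Step~3 applies it to $|f|^p$, which is not Schwartz. You should first extend mass conservation to nonnegative Borel radial functions, with $\tau_y$ defined by R\"osler's formula. One way is a monotone-class argument starting from nonnegative $C_c^\infty$ radial functions vanishing near the origin, with monotone convergence on both sides. Another is to mollify $|f|^p$ by a radial mollifier, which yields radial Schwartz functions converging uniformly with a common rapidly decaying majorant, and then apply Fatou. That already gives the needed inequality $\int\tau_y(|f|^p)\,\mathrm{d}\mu_k\le\int|f|^p\,\mathrm{d}\mu_k$.

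\emph{The case $p=\infty$.} Density is not available here. Apply the extended identity to $\chi_N(|x|)$ with $N\subset[0,\infty)$ null. This shows that R\"osler's formula does not depend on the representative of $\widetilde f$ up to $\mu_k$-null sets, which is what makes $\tau_y f$ well defined for general $f\in L_k^\infty$.

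\emph{Complex $f$.} Splitting a complex $f$ into real and imaginary parts only gives the constant $\sqrt2$ in $L^1$. Use $|\tau_y f|\le\tau_y|f|$ instead, which is just your Step~3 with $p=1$.

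\emph{Riesz--Thorin.} Your remark that Riesz--Thorin is unavailable is inaccurate. Radial $L_k^p$ is isometric to $L^p\bigl((0,\infty),c\,r^{d_k-1}\,\mathrm{d}r\bigr)$, so the $p=1$ and $p=\infty$ bounds interpolate directly. Jensen is simply more elementary and also gives pointwise domination.
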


\subsection{Dunkl convolution}

The Dunkl convolution of two functions $f,g \in L_k^2(\mathbb{R}^d)$ is defined by
\[
f *_k g(x) := \int_{\mathbb{R}^d} f(y) \tau_x g^\vee(y) \, \mathrm{d}\mu_k(y), \qquad g^\vee(y) = g(-y).
\]
Equivalently, using the Dunkl transform,
\[
\mathcal{F}_k(f *_k g) = c_k^{-1}(\mathcal{F}_k f)(\mathcal{F}_k g).
\]
The convolution is commutative: $f *_k g = g *_k f$.

% [R8-LANG-P2-08] Polished the exposition without changing the argument.
When at least one factor is radial, the following Young inequality holds; see \cite{ST1}.
\begin{lemma}
\label{lem:young}
Let $1 \leq p,q,r \leq \infty$ satisfy $1/p + 1/q = 1 + 1/r$. If $f \in L_k^p(\mathbb{R}^d)$ and $g \in L_k^q(\mathbb{R}^d)$, and one of them is radial, then $f *_k g \in L_k^r(\mathbb{R}^d)$ and
\[
\| f *_k g \|_{L_k^r} \leq \| f \|_{L_k^p} \| g \|_{L_k^q}.
\]
In particular, taking $p = \infty, q = 1, r = \infty$ gives
\[
\| f *_k g \|_{L^\infty} \leq \| f \|_{L^\infty} \| g \|_{L_k^1}.
\]

\end{lemma}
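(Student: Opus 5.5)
The plan is to reduce to the case where $g$ is radial, prove the two endpoint estimates directly from Lemma~\ref{lem:radial-translation}, and fill in the intermediate exponents by Riesz--Thorin interpolation. By commutativity, $f*_k g=g*_k f$, so we may assume $g$ is the radial factor. Then $g^\vee=g$, and
\[
f*_k g(x)=\int_{\mathbb{R}^d} f(y)\,\tau_x g(y)\,\mathrm{d}\mu_k(y).
\]
We work first with $f,g\in\mathcal{S}(\mathbb{R}^d)$, $g$ radial, where every integral is absolutely convergent. At the end we extend by density to $1\le p,q<\infty$, and treat $p=\infty$ or $q=\infty$ directly by the pointwise formula.

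\textbf{Symmetry of radial translates.} The key identity is
\[
\tau_x g(y)=\tau_y g(x),\qquad x,y\in\mathbb{R}^d,\ g \text{ radial}.
\]
For Schwartz $g$ we use the integral formula for $\tau_x g$. Since $\mathcal{F}_k g$ is radial, it is even, and $\mu_k$ is invariant under $\xi\mapsto-\xi$. By property (ii) of $E_k$ we have $E_k(iy,-\xi)=E_k(-iy,\xi)$ and $E_k(-ix,-\xi)=E_k(ix,\xi)$. Substituting $\xi\mapsto-\xi$ in
\[
\tau_x g(y)=c_k\int E_k(iy,\xi)E_k(-ix,\xi)\mathcal{F}_kg(\xi)\,\mathrm{d}\mu_k(\xi)
\]
turns it into the formula for $\tau_y g(x)$. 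For general radial $g\in L^q_k$, the identity follows by density together with Lemma~\ref{lem:radial-translation}, or directly from Rösler's representation.

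\textbf{Endpoint $p=1$, $r=q$.} By Minkowski's integral inequality and the symmetry identity,
\[
\|f*_k g\|_{L^q_k}\le\int|f(y)|\,\|\tau_{\cdot}g(y)\|_{L^q_k}\,\mathrm{d}\mu_k(y)
=\int|f(y)|\,\|\tau_y g\|_{L^q_k}\,\mathrm{d}\mu_k(y)\le\|f\|_{L^1_k}\|g\|_{L^q_k},
\]
where the last step is Lemma~\ref{lem:radial-translation}.

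\textbf{Endpoint $p=q'$, $r=\infty$.} Hölder's inequality gives, for each $x$,
\[
|f*_k g(x)|\le\|f\|_{L^{q'}_k}\,\|\tau_x g\|_{L^q_k}\le\|f\|_{L^{q'}_k}\|g\|_{L^q_k},
\]
again by Lemma~\ref{lem:radial-translation}. With $q=1$ this is exactly the displayed special case $p=\infty$, $q=1$, $r=\infty$.

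\textbf{Interpolation.} Fix $g$ radial and consider the linear operator $f\mapsto f*_k g$. It is bounded $L^1_k\to L^q_k$ and $L^{q'}_k\to L^\infty_k$, with norm at most $\|g\|_{L^q_k}$ in both cases. The Riesz--Thorin theorem on the $\sigma$-finite space $(\mathbb{R}^d,\mu_k)$ then gives boundedness $L^p_k\to L^r_k$ with the same norm bound. Here $1/p=(1-\theta)+\theta/q'$ and $1/r=(1-\theta)/q$, which is exactly the relation $1/p+1/q=1+1/r$.

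The only delicate point is the symmetry identity, and in particular justifying it beyond Schwartz $g$. I would handle this by approximating $g$ in $L^q_k$ by radial Schwartz functions, using the $L^q_k$-contractivity of $\tau_y$ on radial functions to pass to the limit.
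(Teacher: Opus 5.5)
The paper contains no proof of this lemma. It simply quotes \cite{ST1}, the same source as the contractivity statement of Lemma~\ref{lem:radial-translation} on which your argument rests. Your derivation from that lemma is correct and complete in outline. The identity $\tau_x g(y)=\tau_y g(x)$ for radial Schwartz $g$ follows exactly as you compute (only evenness of $g$ is used), the two endpoint bounds are right, and Riesz--Thorin reaches precisely the admissible triples with constant $1$. Compared with the bare citation, your route shows that the lemma is just the generalized Young inequality for the integral operator with kernel $K(x,y)=\tau_x g(y)$. Its $L^q_k$ norms in $y$ (by contractivity) and in $x$ (by contractivity plus symmetry) are both bounded by $\|g\|_{L^q_k}$. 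The citation instead leaves to \cite{ST1} the measure-theoretic set-up of $\tau_x$ on radial $L^q_k$ functions through R\"osler's positive representation.

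Two details should be tightened. First, when the radial factor is $f$, ``by commutativity'' is really a convention. The defining formula would then translate a non-radial $L^q_k$ function, for which $L^q_k$-boundedness of $\tau_x$ is not available in general when $q\ne2$, and the commutativity recorded in Section~2 concerns only $L^2_k$ functions. So $f*_kg$ has to be read as the integral in which the radial factor is translated. Second, extending by density requires more than a Cauchy sequence in $L^r_k$. You must identify the limit with $\int f(y)\tau_x g(y)\,\mathrm{d}\mu_k(y)$ and show that this integral converges absolutely for a.e.\ $x$. It is simpler to avoid density and prove, for $r<\infty$, arbitrary $f$, and radial $g$,
\[
\int |f|\,|\tau_x g|\,\mathrm{d}\mu_k\le\Bigl(\int |f|^p\,|\tau_x g|^q\,\mathrm{d}\mu_k\Bigr)^{1/r}\|f\|_{L^p_k}^{1-p/r}\,\|g\|_{L^q_k}^{1-q/r},
\]
and then integrate the $r$-th power in $x$ using Tonelli. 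This needs symmetry and joint measurability of $(x,y)\mapsto\tau_x g(y)$ for general radial $g$, which is exactly the point you flagged. Your approximation argument supplies both, provided $q<\infty$. Contractivity and Tonelli make $(x,y)\mapsto\tau_x g_n(y)$ Cauchy in $L^q(B\times\mathbb{R}^d,\mu_k\otimes\mu_k)$ for every bounded $B$, and by Schwartz symmetry also in $L^q(\mathbb{R}^d\times B,\mu_k\otimes\mu_k)$. The limit kernel therefore has $x$-slices $\tau_x g$ and $y$-slices $\tau_y g$ almost everywhere. The case $q=\infty$ forces $p=1$ and $r=\infty$, where your pointwise bound needs no symmetry.
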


\subsection{Dunkl heat kernel}

The Dunkl Laplacian associated with $R$ and $k$ is the differential--difference operator
$\Delta_k=\sum_{j=1}^d T_j^2$, where $T_j$ is defined by \eqref{ETJ}.

% [R8-LANG-P2-09] Polished the exposition without changing the argument.
The heat semigroup $H_t=e^{t\Delta_k}$ introduced in Section~1 has the kernel representation
\[
H_t f(x)=\int_{\mathbb{R}^d} h_t(x,y) f(y)\,w(y)\mathrm{d}y,
\]
where the heat kernel
\begin{equation}\label{eq:2.25}
h_t(x,y)=c_k(2t)^{-d_k/2} E_k\!\left(\frac{x}{\sqrt{2t}},\frac{y}{\sqrt{2t}}\right) e^{-(|x|^2+|y|^2)/(4t)}
\end{equation}
is a $C^\infty$ function of all variables $x,y\in\mathbb{R}^d$, $t>0$, and satisfies
\begin{equation}\label{eq:2.26}
0<h_t(x,y)=h_t(y,x).
\end{equation}

% [R8-REF-047] Recorded the two standard semigroup identities explicitly used later.
We also use the mass-conservation and multiplier identities (see \cite{MR2}):
\[
\int_{\mathbb{R}^d}h_t(x,y)\,\mathrm{d}\mu_k(y)=1,\qquad
\mathcal{F}_k(H_tf)(\xi)=e^{-t|\xi|^2}\mathcal{F}_k f(\xi).
\]

The closures of connected components of
\[
\{x\in\mathbb{R}^d:\langle x,\alpha\rangle\neq 0 \text{ for all } \alpha\in R\}
\]
are called (closed) \textit{Weyl chambers}. % [R8-LANG-P2-10] Polished the exposition without changing the argument.
The orbit distance $d_G(x,y)$ is defined in \eqref{EDD}.

% [R8-NOTATION-G003] Unified multiplicity/derivative/space/index notation or corrected a typographical inconsistency.
The upper and lower bounds for the Dunkl heat kernel established in \cite{JDA1} are recalled in Lemma~\ref{LBOUND} below.

For a finite sequence $\boldsymbol{\alpha}=(\alpha_1,\alpha_2,\dots,\alpha_m)$ of elements of $R_+$, $x,y\in\mathbb{R}^d$ and $t>0$, let
\begin{equation}\label{eq:1.3}
\ell(\boldsymbol{\alpha}):=m
\end{equation}
be the length of $\boldsymbol{\alpha}$,
\begin{equation}\label{eq:1.4}
\sigma_{\boldsymbol{\alpha}}:=\sigma_{\alpha_m}\circ\sigma_{\alpha_{m-1}}\circ\dots\circ\sigma_{\alpha_1},
\end{equation}

For $m\ge1$, define
\[
\rho_{\boldsymbol{\alpha}}(x,y,t):=\prod_{j=0}^{m-1}\left(1+\frac{|x-\sigma_{\alpha_j}\circ\cdots\circ\sigma_{\alpha_1}(y)|}{\sqrt{t}}\right)^{-2},
\]
where the prefix with $j=0$ is the identity map.

For $x,y\in\mathbb{R}^d$, let $n(x,y)=0$ if $d_G(x,y)=|x-y|$ and
\[
n(x,y)=\min\bigl\{m\in\mathbb{N}^+: d_G(x,y)=\bigl|x-\sigma_{\alpha_m}\circ\cdots\circ\sigma_{\alpha_2}\circ\sigma_{\alpha_1}(y)\bigr|,\ \alpha_j\in R\bigr\}.
\]

We say that a finite sequence $\boldsymbol{\alpha}=(\alpha_1,\alpha_2,\dots,\alpha_m)$ of positive roots is \textit{admissible for the pair} $(x,y)\in\mathbb{R}^d\times\mathbb{R}^d$ if $n(x,\sigma_{\boldsymbol{\alpha}}(y))=0$. In other words, the composition
\[
\sigma_{\boldsymbol{\alpha}}=\sigma_{\alpha_m}\circ\sigma_{\alpha_{m-1}}\circ\cdots\circ\sigma_{\alpha_1}
\]
of the reflections $\sigma_{\alpha_j}$ maps $y$ to a Weyl chamber that also contains $x$.

The set of all admissible sequences $\boldsymbol{\alpha}$ for the pair $(x,y)$ will be denoted by $\mathcal{A}(x,y)$. Note that if $n(x,y)=0$, then $\boldsymbol{\alpha}=\emptyset\in\mathcal{A}(x,y)$.

Define
\begin{equation}\label{eq:1.7}
\Lambda(x,y,t):=\sum_{\boldsymbol{\alpha}\in\mathcal{A}(x,y),\,\ell(\boldsymbol{\alpha})\le 2|G|}\rho_{\boldsymbol{\alpha}}(x,y,t).
\end{equation}

For the empty sequence, set $\rho_{\emptyset}(x,y,t)=1$. The following upper and lower bounds were proved in \cite{JDA1} for strictly positive multiplicities.
\begin{lemma}\label{LBOUND}
Assume that $k_\alpha>0$ for all roots, $0<c_u<1/4$, and $c_l>1/4$. Then there are constants $C_u, C_l > 0$ such that for all $x, y \in \mathbb{R}^d$ and $t > 0$ we have
\begin{equation}\label{E2.1}
C_l \mu_k\big(B(x, \sqrt{t})\big)^{-1} e^{-c_l \frac{d_G(x,y)^2}{t}} \Lambda(x, y, t) \leq h_t(x, y),
\end{equation}
\begin{equation}\label{E2.2}
h_t(x, y) \leq C_u \mu_k\big(B(x, \sqrt{t})\big)^{-1} e^{-c_u \frac{d_G(x,y)^2}{t}} \Lambda(x, y, t).
\end{equation}
\end{lemma}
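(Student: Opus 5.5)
This bound is quoted from \cite{JDA1}. If I had to prove it myself, I would start from the explicit formula \eqref{eq:2.25} and the Dunkl Laplacian written in its standard form for $f\in C^2$:
\[
\Delta_k f(x)=\Delta f(x)+\sum_{\alpha\in R_+}2k_\alpha\frac{\langle\nabla f(x),\alpha\rangle}{\langle\alpha,x\rangle}-\sum_{\alpha\in R_+}k_\alpha|\alpha|^2\frac{f(x)-f(\sigma_\alpha x)}{\langle\alpha,x\rangle^2}.
\]
I would split this as $\Delta_k=L-\mathcal{D}$. Here $L$ is the $G$-invariant, purely local part: the Laplacian, the gradient drift, and the multiplication term $-\sum_\alpha k_\alpha|\alpha|^2\langle\alpha,x\rangle^{-2}$. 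The remaining term is the positive reflection operator
\[
\mathcal{D}f(x)=-\sum_\alpha k_\alpha|\alpha|^2\langle\alpha,x\rangle^{-2}f(\sigma_\alpha x).
\]
This $\mathcal D$ has a negative sign, so it feeds positive mass into $e^{t\Delta_k}$.

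\textbf{Step 1: first-order bounds.} The kernel $h_t$ is the Dunkl translate of the radial Gaussian. R\"osler's positive representation then gives
\[
h_t(x,y)\asymp\mu_k(B(x,\sqrt t))^{-1}\int e^{-c\,A(x,y,\eta)^2/t}\,d\nu_x(\eta),
\]
where $A\ge d_G(x,y)$. This yields the crude upper bound $h_t\lesssim\mu_k(B(x,\sqrt t))^{-1}e^{-c\,d_G(x,y)^2/t}$, using \eqref{EWBOUND} and the doubling property to swap $x$ and $y$ in the volume factor. It also yields the lower bound in the regime $|x-y|\lesssim\sqrt t$, because there the Gaussian mass near $\eta=x$ is comparable to one. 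Lemma~\ref{LPY} supplies the extra factor $(1+|x-y|/\sqrt t)^{-1}$. The upgrade to the exponent $-2$, and to the full product over a chain of reflections, is the real content.

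\textbf{Step 2: Duhamel iteration.} I would write
\[
h_t(x,y)=p_t(x,y)+\int_0^t\!\!\int p_{t-s}(x,z)\sum_\alpha\frac{k_\alpha|\alpha|^2}{\langle\alpha,z\rangle^2}\,h_s(\sigma_\alpha z,y)\,d\mu_k(z)\,ds,
\]
where $p_t$ is the kernel of $e^{tL}$. I would first show that $p_t$ has Gaussian bounds in the ordinary distance $|x-y|$, with the extra decay $(1+|x-y|/\sqrt t)^{-2}$ coming from the singular potential near the walls. Then I would iterate the identity. Each iteration replaces $y$ by $\sigma_\alpha y$ and multiplies by one factor $\bigl(1+|x-\sigma_{\alpha_j}\cdots\sigma_{\alpha_1}y|/\sqrt t\bigr)^{-2}$. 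For this I would integrate $\langle\alpha,z\rangle^{-2}$ against two Gaussians, using \eqref{EWBOUND} to see that $\int_{B(z_0,\sqrt t)}\langle\alpha,z\rangle^{-2}\,d\mu_k$ is comparable to $\mu_k(B(z_0,\sqrt t))/t$ away from the wall. A path of length at most $2|G|$ reaches the chamber of $x$, so truncating the iteration there gives the sum $\Lambda$ in \eqref{eq:1.7}. All terms are positive, so for the lower bound I would keep only the term along an admissible chain. Since any $c_l>1/4$ is allowed, the small losses in the exponent can be absorbed.

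\textbf{Main obstacle.} I expect the hardest step to be the upper bound: after finitely many iterations, the tail of the Duhamel series must be controlled without losing the exact chain structure. My first attempt would be to bound the remainder using Step~1 together with an induction on the Weyl chamber distance $n(x,\sigma_{\boldsymbol\alpha}y)$. This works because once $\sigma_{\boldsymbol\alpha}y$ lies in the chamber of $x$, we have $d_G(x,y)=|x-\sigma_{\boldsymbol\alpha}y|$, and the crude bound already matches the $\rho$-weighted one. The constants $c_u<1/4$ appear from absorbing the polynomial factors into the Gaussian exponent.
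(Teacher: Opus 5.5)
As you note, the paper does not prove this lemma; it quotes it from \cite{JDA1}. So your outline has to stand on its own, and it has a genuine gap at exactly the step you flag. The Duhamel step carries no small factor. The local part kills at rate $V(z)=\sum_{\alpha\in R_+}k_\alpha|\alpha|^2\langle\alpha,z\rangle^{-2}$, so $\int_0^t\int p_{t-s}(x,z)V(z)\,d\mu_k(z)\,ds=1-\int p_t(x,z)\,d\mu_k(z)$, and this is close to $1$ near the walls. Hence a crude bound placed in the remainder is never improved. That bound is also $G$-invariant in $y$, so it forgets which chamber $y$ lies in.

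Here is a concrete failure in rank at least two. Take $x$ at distance comparable to $\sqrt t$ from $\{\langle\beta,\cdot\rangle=0\}$ and at distance at least $D\gg\sqrt t$ from the other root hyperplanes, and let $y=\sigma_\alpha x$ with $\alpha\neq\pm\beta$. Every admissible sequence is nonempty, so $\Lambda(x,y,t)\lesssim t/D^2$, and the asserted bound reads $h_t(x,y)\lesssim (t/D^2)\,\mu_k(B(x,\sqrt t))^{-1}$. However, the remainder term with one $\beta$-reflection near $x$, followed by the crude bound (which only sees $d_G(\sigma_\beta z,y)=d_G(z,x)$), is $\gtrsim\mu_k(B(x,\sqrt t))^{-1}$. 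The same happens at every truncation order, because chains of $\beta$-reflections never become admissible for $(x,y)$.

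Your induction on $n(x,\sigma_{\boldsymbol\alpha}y)$ does not repair this. The last factor is $h_s(\sigma_\beta z,y)$ at a new pair whose chamber distance need not drop, so you would be assuming the estimate to prove it, with no contraction to close a bootstrap. Worse, when some $k_\alpha<1/2$ the $G$-invariant part of the process reaches the walls, where $\langle\alpha,z\rangle^{-2}$ is not integrable along paths. Reflections then accumulate, and the remainder of the iteration need not tend to zero at all.

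The lower bound has the mirror-image problem. The kernel $p_t$ of the killed local part vanishes unless $x,y$ lie in the same chamber, where $|x-y|=d_G(x,y)$; so the extra decay you attribute to it is vacuous. It also vanishes on the walls. Consequently each fixed term of the iteration tends to $0$ as $x$ approaches a wall: in rank one, starting at distance $\epsilon\sqrt t$, the number of reflections on $[0,t]$ tends to infinity in probability as $\epsilon\to0$. The asserted lower bound, by contrast, does not degenerate. Keeping a single admissible chain works only for points away from the walls, so you must first move there using the semigroup property and a near-diagonal lower bound.

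Step~1 as written does not supply that near-diagonal bound. R\"osler's formula gives exactly $h_t(x,y)=c_k(2t)^{-d_k/2}\int e^{-A(x,y,\eta)^2/(4t)}\,d\nu_x(\eta)$. For $|x-y|\lesssim\sqrt t$ this Gaussian average is of order $t^{d_k/2}/\mu_k(B(x,\sqrt t))$, not one. So the factor $\mu_k(B(x,\sqrt t))^{-1}$ requires a separate on-diagonal estimate.

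A mechanism that yields one factor $(1+|x-\sigma y|/\sqrt t)^{-2}$ per reflection, with no series to sum, is the exact identity
\[
\frac{|x-y|^2}{4t}\,h_t(x,y)=t\,\partial_t h_t(x,y)+\frac{d}{2}\,h_t(x,y)+\sum_{\alpha\in R_+}k_\alpha\,h_t(x,\sigma_\alpha y).
\]
It follows from $T_{j,x}h_t(x,y)=\frac{y_j-x_j}{2t}h_t(x,y)$ and $\partial_t h_t=\Delta_{k,x}h_t$. Combined with crude Gaussian bounds for $t^m\partial_t^m h_t$, it gives the power $-2$ at once. Iterating it, together with its $t$-derivatives, on the reflected terms is the natural route to the products $\rho_{\boldsymbol\alpha}$.
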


\subsection{Auxiliary lemmas}

% [R8-LANG-P2-11] Polished the exposition without changing the argument.
We use the polynomial sublevel-set estimate of \cite{ACJ}.

\begin{lemma}[Carbery--Wright, Theorem 2]
\label{lem:carbery-wright}
Let $X$ be a Banach space and let $p:\mathbb{R}^d \to X$ be a polynomial of degree at most $m$, where $m\ge1$. Let $K \subset \mathbb{R}^d$ be a convex body with $|K|=1$. Define $p^\sharp(x) := \|p(x)\|^{1/m}$. Then for any $0 \le q \le \infty$ and any $\delta > 0$, there exists an absolute constant $C>0$, independent of $p,m,K,d,q,X$, such that
\[
\|p^\sharp\|_{L^q(K)} \, \delta^{-1} \, \bigl|\{ x \in K : p^\sharp(x) \le \delta \}\bigr|
\le C\, d\, \bigl(d B(d,q+1)\bigr)^{1/q},
\]
where $B$ denotes the classical Beta function, with the usual interpretations for $q=0$ and $q=\infty$. In particular, for $q=\infty$,
\[
\|p^\sharp\|_{L^\infty(K)} \, \delta^{-1} \, \bigl|\{ x \in K : p^\sharp(x) \le \delta \}\bigr| \le C\, d.
\]
\end{lemma}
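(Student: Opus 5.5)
The plan is to prove the case $q=\infty$ in full, since that is the case used later, and to indicate how the general case follows from the Lov\'asz--Simonovits/Kannan localization lemma, as in Carbery--Wright.

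\emph{Reduction to scalar polynomials ($q=\infty$).} Since $K$ is compact and $x\mapsto\|p(x)\|$ is continuous, there is $x_0\in K$ with $\|p(x_0)\|=\|p\|_{L^\infty(K)}$. By Hahn--Banach choose $\ell\in X^*$ with $\|\ell\|=1$ and $\ell(p(x_0))=\|p(x_0)\|$; if $X$ is complex, replace $\ell$ by its real part. Then $P:=\ell\circ p$ is a real polynomial of degree at most $m$. It satisfies $|P|\le\|p\|$ on $K$ and $|P(x_0)|=\sup_K\|p\|$, so $\sup_K|P|=\sup_K\|p\|$. Moreover $\{p^\sharp\le\delta\}\subset\{|P|\le\delta^m\}$. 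Setting $\varepsilon=\delta^m/\sup_K|P|$, it therefore suffices to show
\[
\bigl|\{x\in K:|P(x)|\le\varepsilon|P(x_0)|\}\bigr|\le C\,d\,\varepsilon^{1/m}|K| .
\]

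\emph{Polar decomposition about $x_0$ and the one-dimensional Remez inequality.} Since $K$ is convex and contains $x_0$, write $K=\{x_0+r\theta:\theta\in\mathbb S^{d-1},\,0\le r\le\rho(\theta)\}$. Then
\[
|A|=\int_{\mathbb S^{d-1}}\int_0^{\rho(\theta)}\chi_A(x_0+r\theta)\,r^{d-1}\,\mathrm dr\,\mathrm d\theta
\]
for any measurable $A\subset K$, and $|K|=\frac1d\int_{\mathbb S^{d-1}}\rho(\theta)^d\,\mathrm d\theta$. On each segment, $q_\theta(r)=P(x_0+r\theta)$ is a one-variable polynomial of degree at most $m$ with $\sup_{[0,\rho]}|q_\theta|\ge|q_\theta(0)|=|P(x_0)|$. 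The classical Remez inequality (Chebyshev extremality) gives
\[
\bigl|\{r\in[0,\rho]:|q_\theta(r)|\le\varepsilon\sup|q_\theta|\}\bigr|\le4\rho\,\varepsilon^{1/m}.
\]
Since $r^{d-1}\le\rho^{d-1}$, the inner weighted integral is at most $4\rho^d\varepsilon^{1/m}$. Integrating over $\theta$ yields $|\{|P|\le\varepsilon|P(x_0)|\}|\le4d\,\varepsilon^{1/m}|K|$. With $|K|=1$ this is exactly the $q=\infty$ claim, with absolute constant $C=4$.

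\emph{General $q$ and the main obstacle.} For $0\le q<\infty$ the supremum must be replaced by an $L^q$ average, and there is no single extremal point to project from. This is the hard step. I would follow Carbery--Wright and argue by contradiction via the localization lemma. If the inequality failed, the two integral conditions $\int_K(\|p\|^{q/m}-a)>0$ and $\int_K(\chi_{\{p^\sharp\le\delta\}}-b)>0$ would be inherited by some needle: a segment carrying a log-concave weight of the form $(\text{affine})^{d-1}$. On a needle, the vector-valued polynomial restricted to the line is handled by an explicit one-dimensional computation comparing $L^q$ norms and sublevel measures of $|t|^{1/m}$-type extremals against the weight $t^{d-1}$. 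This produces the Beta-function factor $(dB(d,q+1))^{1/q}$.

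For $X$-valued $p$, note that $\|p(\cdot)\|^{1/m}$ restricted to a line can still be compared with a scalar polynomial. Choose a norming functional at the point where the needle's weighted maximum is attained; this reduces the needle problem to the scalar case. Verifying the one-dimensional extremal inequality with the exact constant is the step I expect to require the most care. The $q=\infty$ case needed later is fully settled by the elementary argument above.
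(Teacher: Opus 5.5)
The paper gives no proof of this lemma. It is quoted from Carbery--Wright, and the only instance the paper uses is $q=\infty$, scalar-valued, $m=1$, $p(x)=\langle x,\alpha\rangle$ on a closed cube (Remark~\ref{R29}, \eqref{E6}, Proposition~\ref{prop:hyperplane-mass}). Your $q=\infty$ argument is correct and self-contained:
\begin{itemize}
\item Norming $p(x_0)$ at a maximizer gives a real polynomial $P$ with $\sup_K|P|=P(x_0)=\sup_K\|p\|$ and $\{p^\sharp\le\delta\}\subset\{|P|\le\delta^m\}$.
\item Polar coordinates about $x_0$ need only that $K$ is star-shaped about $x_0$.
\item The Remez inequality on each ray, combined with $r^{d-1}\le\rho(\theta)^{d-1}$ and $|K|=\frac1d\int\rho^d$, gives the bound with $C=4$. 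The cases $p(x_0)=0$ and $\varepsilon\ge1$ are trivial.
\end{itemize}
This is more elementary than a localization argument and covers everything the paper needs.

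For $0\le q<\infty$, however, the proposal does not prove the statement. The one-dimensional needle inequality with density proportional to $\ell(t)^{d-1}$ is where $(dB(d,q+1))^{1/q}$ has to come from, and you only assert it.

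Your reduction of the $X$-valued needle problem to the scalar one also does not go through as written. A functional normed at a single point satisfies $|\ell\circ p|\le\|p\|$, with equality only at that point. It therefore controls the sublevel set. But it bounds $\|p^\sharp\|_{L^q(\nu)}$ from below, not from above, by $\bigl\|\,|\ell\circ p|^{1/m}\bigr\|_{L^q(\nu)}$. The only upper bound it supplies is $\|p^\sharp\|_{L^q(\nu)}\le\sup|\ell\circ p|^{1/m}$. That puts you back in the $q=\infty$ estimate $Cd$ and loses exactly the factor $(dB(d,q+1))^{1/q}$, which already equals $1/(d+1)$ at $q=1$.

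Better constants cannot repair this loss. Take $K$ a cone of volume $1$ with apex $x_0$, and $p$ affine with $p(x_0)=1$ and $p=0$ on the base. As $\delta\to0$, the $L^\infty$ quantity tends to $d$, while the $L^q$ quantity tends to $d\,(dB(d,q+1))^{1/q}$. So any route through $\|p^\sharp\|_{L^\infty}$ falls short.

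To close the gap you would need two things:
\begin{itemize}
\item a genuine $L^q$ comparison $\|p^\sharp\|_{L^q(\nu)}\le A\bigl\|\,|\ell\circ p|^{1/m}\bigr\|_{L^q(\nu)}$ with $A$ absolute, or another way to handle vector-valued $p$;
\item the explicit weighted extremal computation on the needle.
\end{itemize}
Otherwise, state the lemma only for $q=\infty$, which is all the paper uses.
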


\begin{remark}\label{R29}
In the scalar-valued setting $X=\mathbb{C}$, taking $m=1$ and $p(x)=\langle x,\alpha\rangle$ for $\alpha\in R_+$, we have $p^\sharp(x)=|p(x)|$. The $q=\infty$ case then reads
\[
\sup_{x\in K} |\langle x,\alpha\rangle| \, \delta^{-1} \, \bigl|\{ x \in K : |\langle x,\alpha\rangle| \le \delta \}\bigr| \le C\, d.
\]
This is the estimate employed in the proof of Theorem~\ref{TEV} to control the measure of the set where the linear forms are small.
\end{remark}

% [R8-LANG-P2-12] Polished the exposition without changing the argument.
We also recall the Remez inequality for polynomials; see \cite{EJR}.

Let $f$ be a polynomial of degree $m$, $I\subset\mathbb {R}$ an interval, and $E\subset I$ measurable. Then
\begin{equation}
\|f\|_I \le \left(\frac{4|I|}{|E|}\right)^m\|f\|_E,
\end{equation}
where $\|f\|_I=\sup_{x\in I} |f(x)|$.

 For analytic functions, we use the following result from \cite[Lemma~1]{OK} in the proof of the sufficiency part of the Logvinenko--Sereda theorem.

\begin{lemma}\label{L1}
Let $\Phi$ be an analytic function on $D_5(0)$, the open disk in $\mathbb{C}$ centered at the origin with radius $5$. Let $I$ be an interval of length $1$ satisfying $0 \in I$, and let $\hat{E} \subset I$ be a subset of positive Lebesgue measure. If $|\Phi(0)| \geq 1$ and $M = \max_{|z|\leq 4} |\Phi(z)|$, then there exists an absolute constant $C > 0$ such that
\begin{equation}
\sup_{x\in I} |\Phi(x)| \leq \left(\frac{C}{|\hat{E}|}\right)^{\frac{\ln M}{\ln 2}} \sup_{x\in \hat{E}} |\Phi(x)|.
\end{equation}
\end{lemma}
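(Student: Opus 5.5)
The lemma just stated is Kovrijkine's analytic-function estimate, quoted from \cite[Lemma~1]{OK}. The paper uses it as a black box, but I would prove it as follows. First normalize: set $\Psi=\Phi/\Phi(0)$. Both sides of the inequality scale the same way, and $\max_{|z|\le4}|\Psi|\le M$ because $|\Phi(0)|\ge1$. So we may assume $\Phi(0)=1$. We may also assume $M\ge2$: otherwise $M<2$, the maximum principle on $|z|\le 4$ would force $\ln M$ small, and this degenerate case only costs the value of the absolute constant.

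\textbf{Step 1: factor out the zeros.} Let $a_1,\dots,a_n$ be the zeros of $\Phi$ in the disk $|z|\le 2$. Jensen's formula on the disk of radius $4$, using $\Phi(0)=1$, gives
\[
n\ln 2\le\int_0^4\frac{n(r)}{r}\,dr\le \ln M,
\]
so $n\le \ln M/\ln2$. Write $\Phi=P\,g$ with $P(z)=\prod_{j}(z-a_j)$. Then $g$ is analytic and zero-free on $|z|<2$, hence on $|z|<2$ we can write $g=e^{h}$ with $h$ analytic there.

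\textbf{Step 2: control the oscillation of $g$ on $I$.} The goal is to bound $\sup_I|g|/\inf_I|g|$ by $e^{C\ln M}$. Using the maximum principle on $|z|=3$, where $|z-a_j|\ge1$, we get $|g|\le M$ on $|z|\le 3$, since $|P|\ge1$ there. We also have $|g(0)|=1/|P(0)|\ge 2^{-n}$. Apply the Borel--Carath\'eodory inequality to $\operatorname{Re}h=\ln|g|$, whose upper bound is $\ln M$ on the disk of radius $3$ and whose value at $0$ is at least $-n\ln2\ge-\ln M$. This bounds $|h(z)-h(0)|\le C\ln M$ on $|z|\le 3/2$, a disk containing $I$ because $|I|=1$ and $0\in I$. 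It follows that $\sup_I|g|\le e^{C\ln M}\inf_I|g|$. This is the step I expect to be the main obstacle: it needs careful choices of radii so that $I$ stays inside the region where $h$ is controlled.

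\textbf{Step 3: the polynomial factor.} $P$ is a polynomial of degree $n\le\ln M/\ln2$, so the Remez inequality gives
\[
\sup_I|P|\le (4/|\hat E|)^n\sup_{\hat E}|P|.
\]
Combining with Step 2,
\[
\sup_I|\Phi|\le \sup_I|P|\,\sup_I|g|\le (4/|\hat E|)^n e^{C\ln M}\inf_I|g|\,\sup_{\hat E}|P|\le (4/|\hat E|)^n e^{C\ln M}\sup_{\hat E}|\Phi|.
\]
Since $|\hat E|\le1$, we have $e^{C\ln M}\le (e^{C'}/|\hat E|)^{\ln M/\ln2}$. Together with $n\le\ln M/\ln2$, this yields the stated form with an absolute constant $C$.
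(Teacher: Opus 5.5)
The paper does not prove this lemma; it quotes it from Kovrijkine (Lemma~1 there). Your outline is essentially Kovrijkine's own argument: count the zeros in $|z|\le2$ by Jensen, split $\Phi=Pg$, apply Remez to $P$, and control the oscillation of the zero-free factor $g$ on $I$. So the strategy is right. Three places need repair, the first being the step you flagged yourself.

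\textbf{The Borel--Carath\'eodory step.} You cannot apply the inequality on the disk of radius $3$. The zeros of $\Phi$ in $2<|z|<3$ were not removed, so $g$ may vanish there, and $h=\log g$ exists only on some disk $|z|<2+\epsilon$. Your bound $|g|\le M$ on $|z|\le 3$ still gives $\operatorname{Re}h\le\ln M$ on $|z|\le 2$. So apply the inequality with $R=2$, $r=3/2$:
\[
\max_{|z|\le 3/2}|h(z)-h(0)|\le\frac{2r}{R-r}\bigl(\ln M-\ln|g(0)|\bigr)\le 6(\ln M+n\ln 2)\le 12\ln M .
\]
Alternatively, use Harnack's inequality for the nonnegative harmonic function $\ln(M/|g|)$ on $|z|<2$. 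It gives $\ln(M/|g|)\le 3\cdot 2\ln M$ on $|z|\le 1\supset I$, hence $\sup_I|g|\le M^{6}\inf_I|g|$.

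\textbf{The reduction to $M\ge 2$.} This is not justified as stated. As $M\downarrow 1$ the exponent $\ln M/\ln 2$ tends to $0$, so a fixed loss cannot be absorbed by enlarging the constant. The reduction is also unnecessary: every step holds for all $M\ge 1$. For $M<2$, Jensen gives $n=0$ and only the oscillation bound for $g=\Phi$ is used.

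\textbf{The final bookkeeping.} Do not put a factor $1/|\hat E|$ into $e^{C\ln M}$. Combined with $(4/|\hat E|)^n$, that produces $|\hat E|^{-2\ln M/\ln 2}$, which is not the stated form. Instead write $e^{C\ln M}=(2^{C})^{\ln M/\ln 2}$. Then $4/|\hat E|\ge 1$ and $n\le \ln M/\ln 2$ give the base $2^{C+2}/|\hat E|$. With the Harnack constant, the bound is
\[
\sup_I|\Phi|\le\Bigl(\frac{256}{|\hat E|}\Bigr)^{\ln M/\ln 2}\sup_{\hat E}|\Phi| .
\]

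\textbf{A remark on Remez.} Since $P$ has complex roots, note why Remez applies to it. Apply the real Remez inequality to the real polynomial $|P|^2$ of degree $2n$ on $\mathbb{R}$. This yields the same bound $(4/|\hat E|)^n$ for $|P|$.
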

 Related quantitative estimates for quasianalytic functions are given in \cite{FNM}.
\section{Equivalence of Thick Sets}
\label{sec:thick-equivalence}

In this section, we prove the equivalence between thick sets with respect to the Dunkl measure $\mu_k$ and thick sets with respect to the Lebesgue measure. % [R8-LANG-P2-13] Polished the exposition without changing the argument.
This allows us to use Euclidean geometric estimates in the weighted setting.

% [R8-LANG-039] Improved precision, flow, or concision in the proof exposition.
We first show that every unit cube contains a point where the weight is uniformly bounded below. For each function $\ell_\alpha(x)=|\langle \alpha,x\rangle|$, the supremum of $\ell_\alpha$ over any unit cube, namely $\sup_{x\in K} |\langle x,\alpha\rangle|$, admits a uniform positive lower bound.

\begin{lemma}[Uniform Nonvanishing in Unit Cubes]
\label{lem:lower-bound}
% [R8-MATH-015] Made the simultaneous nonvanishing conclusion explicit, as already proved in Lemma 3.1.
There exist a constant $C_0=C_0(d,R_+,k)>0$ and a unit vector $u\in\mathbb{S}^{d-1}$ such that, for every $x\in\mathbb{R}^d$, one can find $t\in[-1/2,1/2]$ with
\[
|\langle\alpha,x+tu\rangle|\ge C_0\qquad\text{for every }\alpha\in R_+.
\]
\end{lemma}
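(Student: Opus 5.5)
Fix a unit vector $u\in\mathbb{S}^{d-1}$ that is not orthogonal to any root: $\langle\alpha,u\rangle\neq0$ for every $\alpha\in R_+$. Such a $u$ exists because the finitely many hyperplanes $\alpha^\perp$ cannot cover the sphere. Set
\[
a:=\min_{\alpha\in R_+}|\langle\alpha,u\rangle|>0 .
\]
For fixed $x\in\mathbb{R}^d$ and $\alpha\in R_+$, the map $t\mapsto\langle\alpha,x+tu\rangle=\langle\alpha,x\rangle+t\langle\alpha,u\rangle$ is affine in $t$ with slope of modulus at least $a$. So the set of bad parameters
\[
B_\alpha(x):=\{t\in[-1/2,1/2]:\ |\langle\alpha,x+tu\rangle|<C_0\}
\]
is an interval of length at most $2C_0/|\langle\alpha,u\rangle|\le 2C_0/a$. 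This is the one-dimensional case of Remark~\ref{R29}, and here it is also elementary.

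The union $\bigcup_{\alpha\in R_+}B_\alpha(x)$ therefore has length at most $2|R_+|C_0/a$. Choose
\[
C_0:=\frac{a}{4|R_+|},
\]
so that this total length is at most $1/2<1=|[-1/2,1/2]|$. Then some $t\in[-1/2,1/2]$ lies outside every $B_\alpha(x)$, and for that $t$ we have $|\langle\alpha,x+tu\rangle|\ge C_0$ for all $\alpha\in R_+$ simultaneously. The constants $C_0$ and $u$ depend only on $R_+$ (and trivially on $d,k$) and not on $x$, which is exactly the uniformity required.

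The only point needing care is that the choice of $u$ and $C_0$ is made once, before $x$ is given. The rest is the measure-counting argument above, so I expect no real obstacle.

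For later use in Theorem~\ref{TEV}, I note two consequences. The point $x+tu$ lies in the closed unit cube around $x$. Moreover, the same counting shows that the good set of $t$ has measure at least $1/2$, and this quantitative form may be what is needed when bounding $w$ from below on a positive-measure portion of each cube.
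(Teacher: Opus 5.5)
Your proposal is correct and follows essentially the same argument as the paper. The paper also fixes a direction $u$ not orthogonal to any root and sets $C_0=\min_\alpha|\langle\alpha,u\rangle|/(4|R_+|)$. It then bounds each bad $t$-interval by $1/(2|R_+|)$, so the union has length at most $1/2$ and a good $t\in[-1/2,1/2]$ exists.
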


\begin{proof}
We first prove that there exists a unit vector $u \in \mathbb{S}^{d-1}$ such that $\langle \alpha, u \rangle \neq 0$ for all $\alpha \in R_+$. If no such vector existed, the unit sphere would be entirely covered by the finite union of hyperplanes $\{\langle \alpha, x \rangle = 0\}_{\alpha \in R_+}$, which is impossible.

Fix such a unit vector $u$ and define
\[
\gamma := \min_{\alpha \in R_+} |\langle \alpha, u \rangle| > 0, \qquad C_0 := \frac{\gamma}{4|R_+|}.
\]
For fixed $x \in \mathbb{R}^d$ and each root $\alpha \in R_+$, define the scalar function
\[
f_\alpha(t) = |\langle \alpha, x + tu \rangle| = \bigl|\langle \alpha, x \rangle + t\langle \alpha, u \rangle\bigr|.
\]
The set $\{ t \in [-1/2,1/2] : f_\alpha(t) < C_0 \}$ is an interval of length at most
\[
\frac{2C_0}{|\langle \alpha, u \rangle|} \leq \frac{1}{2|R_+|}.
\]
Taking the union of these intervals over all $\alpha \in R_+$, the total Lebesgue measure is at most $1/2$. Consequently, there exists $t \in [-1/2, 1/2]$ such that
\[
|\langle \alpha, x + tu \rangle| \geq C_0 \quad \text{for all } \alpha \in R_+.
\]
This completes the proof.
\end{proof}

\begin{remark}
In particular, every ball of radius $1/2$ contains a point where the weight is bounded below by a uniform positive constant; this is not a lower bound at every point of the ball. Precisely,
\[
w(x + tu) = \prod_{\alpha \in R_+} \bigl|\langle \alpha, x + tu \rangle\bigr|^{2k_\alpha} \geq C_0^{2\sum_{\alpha\in R_+} k_\alpha} =: C_0' > 0,
\]
% [R8-NOTATION-G004] Unified multiplicity/derivative/space/index notation or corrected a typographical inconsistency.
where $C_0'=C_0'(d,R_+,k)$. Lemma~\ref{lem:lower-bound} implies that the supremum of $w(x)$ over any unit cube is uniformly bounded below by a positive constant. However, this result does not yield a lower bound for the infimum of $w(x)$, since $w(x)$ vanishes on the reflection hyperplanes $\langle \alpha, x \rangle = 0$.
\end{remark}

We now prove the equivalence of relative density stated in Theorem~\ref{TEV}.

\begin{proof}
% [R8-LANG-016] Condensed the proof roadmap.
We first prove the result for unit cubes and then use homogeneity to treat an arbitrary side length.

\medskip
\noindent\textbf{Step 1: The case $L=1$.}

Suppose first that $E$ is $(\gamma,1)$-relatively dense with respect to Lebesgue measure, i.e., for every unit cube $Q$,
\[
|E \cap Q| \ge \gamma |Q| = \gamma.
\]
We claim that there exists a constant $\gamma_1>0$, depending only on $\gamma,d,R_+,k$, such that
\[
\mu_k(E \cap Q) \ge \gamma_1 \mu_k(Q)
\]
holds for all unit cubes $Q$.

% [R8-LANG-017] Simplified the definition and clarified use of the convex-body lemma on the closed cube.
Fix a unit cube $Q$ and set $\ell_\alpha(x)=|\langle x,\alpha\rangle|$ for each $\alpha\in R_+$. Applying the $q=\infty$ case of Lemma~\ref{lem:carbery-wright} to $p(x)=\langle x,\alpha\rangle$ on $\overline Q$ (whose boundary has measure zero), we obtain
\begin{equation}\label{E6}
\sup_{x\in Q} \ell_\alpha(x) \cdot \delta^{-1} \cdot \bigl|\{ x\in Q : \ell_\alpha(x) \le \delta \}\bigr| \le C d,
\end{equation}
where $C$ is an absolute constant.
% [R8-NOTATION-G005] Unified multiplicity/derivative/space/index notation or corrected a typographical inconsistency.
By Lemma~\ref{lem:lower-bound}, there exists $C_0>0$ independent of $Q$ and $\alpha$ satisfying $\sup_{x\in Q} \ell_\alpha(x) \ge C_0$. Rearranging the inequality yields
\[
\bigl|\{ x\in Q : \ell_\alpha(x) \le \delta \}\bigr| \le \frac{C d}{C_0} \delta.
\]
Choose $\delta>0$ sufficiently small that
\[
\frac{C d|R_+|}{C_0} \delta \le \frac{\gamma}{10}.
\]
Remove thin neighborhoods of the hyperplanes by defining
\[
Q_2 := Q \setminus \bigcup_{\alpha \in R_+} \bigl\{ x\in Q : \bigl|\langle x,\alpha\rangle\bigr| \le \delta \bigr\}.
\]
By construction,
\[
|Q_2| \ge 1 - \frac{\gamma}{10} = \left(1 - \frac{\gamma}{10}\right)|Q|,
\]
and every $x\in Q_2$ satisfies $\bigl|\langle x,\alpha\rangle\bigr| \ge \delta$ for all $\alpha\in R_+$. On $Q_2$, the weight $w(x)$ has bounded oscillation; note that the choice of $\delta$ depends only on the constants $C$, $d$, $|R_+|$, $\gamma$, and $C_0$.
\begin{equation}\label{E1}
\frac{\sup_{x\in Q} w(x)}{\inf_{x\in Q_2} w(x)}
\le \prod_{\alpha\in R_+} \left(1 + \frac{|\alpha|\sqrt{d}}{\delta}\right)^{2k_\alpha}
=: C_1,
\end{equation}
where $C_1$ depends only on $R_+, k, \gamma, d$. Since $|E\cap Q| \ge \gamma$, we obtain the following lower bound for the Lebesgue mass contained in $Q_2$:
\[
|E\cap Q_2| \ge |E\cap Q| - |Q\setminus Q_2| \ge \gamma - \frac{\gamma}{10} = \frac{9}{10}\gamma.
\]
We now bound the weighted measure of $E\cap Q$ from below:
\[
\mu_k(E\cap Q) \ge \mu_k(E\cap Q_2)
= \int_{E\cap Q_2} w(x)\,\mathrm{d}x
\ge \inf_{x\in Q_2} w(x) \cdot |E\cap Q_2|
\ge \frac{9}{10}\gamma \inf_{x\in Q_2} w(x).
\]
On the other hand,
\[
\mu_k(Q) = \int_Q w(x)\,\mathrm{d}x \le \sup_{x\in Q} w(x) \le C_1 \inf_{x\in Q_2} w(x).
\]
Combining the two inequalities yields
\[
\mu_k(E\cap Q) \ge \frac{9}{10C_1}\gamma \mu_k(Q).
\]
Setting $\gamma_1 := \frac{9}{10C_1}\gamma$ completes the forward implication for unit cubes with $L=1$.

Conversely, assume $E$ is $(\gamma,1)$-relatively dense with respect to $\mu_k$, i.e.,
\[
\mu_k(E\cap Q) \ge \gamma \mu_k(Q)
\]
holds for every unit cube $Q$. We derive a lower bound for the Lebesgue measure:
\[
|E\cap Q| \ge \frac{\mu_k(E\cap Q)}{\sup_{x\in E\cap Q} w(x)}
\ge \frac{\gamma \mu_k(Q)}{\sup_{x\in E\cap Q} w(x)}
\ge \frac{\gamma \mu_k(Q)}{\sup_{x\in   Q} w(x)}.
\]

% [R8-LANG-018] Clarified that the reverse implication repeats the construction rather than reusing a previous hypothesis.
Construct $Q_2$ with this density parameter $\gamma$ as above. Then
\begin{align*}
|E\cap Q| &\ge \frac{\gamma}{\sup_{x\in Q} w(x)} \mu_k(Q) \\
&\ge \frac{\gamma}{\sup_{x\in Q} w(x)} \inf_{x\in Q_2} w(x) \, |Q_2| \\
&\ge \gamma \cdot \prod_{\alpha\in R_+} \left( \frac{\delta}{\delta+|\alpha|\sqrt{d}} \right)^{2k_\alpha} \cdot \left(1-\frac{\gamma}{10}\right) \cdot |Q|.
\end{align*}

Define $\gamma_1' := \gamma \cdot \prod_{\alpha\in R_+} \left( \frac{\delta}{\delta+|\alpha|\sqrt{d}} \right)^{2k_\alpha} \cdot \left(1-\frac{\gamma}{10}\right)$. Then $|E\cap Q|\ge \gamma_1'$ for all unit cubes $Q$, which proves the reverse direction for $L=1$.

\medskip
\noindent\textbf{Step 2: Extension to arbitrary $L$ via scaling.}

Let $L>0$ be arbitrary. Introduce the scaled set
\[
E_L := \{ y \in \mathbb{R}^d : L y \in E \}.
\]
For any cube $Q$ of side length $L$, write $Q$ as $Q = L Q_1$, where $Q_1$ is a unit cube.

For the forward implication, suppose that $E$ is $(\gamma,L)$-relatively dense with respect to Lebesgue measure, so $|E\cap Q| \ge \gamma |Q| = \gamma L^d$. A linear change of variables gives
\[
|E_L \cap Q_1| = \frac{1}{L^d} |E \cap Q| \ge \gamma,
\]
so $E_L$ is $(\gamma,1)$-relatively dense with respect to Lebesgue measure. By Step 1, there exists $\gamma_1>0$ (depending only on $\gamma,d,R_+,k$) such that
\[
\mu_k(E_L \cap Q_1) \ge \gamma_1 \mu_k(Q_1).
\]
Recall the homogeneity property of $\mu_k$: for any measurable set $A$ and $\lambda>0$, $\mu_k(\lambda A) = \lambda^{d_k} \mu_k(A)$, with $\gamma_k = \sum_{\alpha\in R_+} k_\alpha$; see \cite{MR1}. Applying this identity with $\lambda=1/L$,
\[
\mu_k(E_L \cap Q_1) = \mu_k\left( \frac{1}{L} (E\cap Q) \right)
% [R8-NOTATION-G006] Unified multiplicity/derivative/space/index notation or corrected a typographical inconsistency.
= L^{-d_k} \mu_k(E\cap Q),
\]
and
\[
% [R8-NOTATION-G007] Unified multiplicity/derivative/space/index notation or corrected a typographical inconsistency.
\mu_k(Q_1) = L^{-d_k} \mu_k(Q).
\]
Cancelling the common scaling factor yields
\[
\mu_k(E\cap Q) \ge \gamma_1 \mu_k(Q).
\]
Hence $E$ is $(\gamma_1, L)$-relatively dense with respect to $\mu_k$.

Conversely, suppose $E$ is $(\gamma,L)$-relatively dense with respect to $\mu_k$, i.e., $\mu_k(E\cap Q) \ge \gamma \mu_k(Q)$. By the same homogeneity relation,
\[
\mu_k(E_L \cap Q_1) \ge \gamma \mu_k(Q_1),
\]
so $E_L$ satisfies the $(\gamma,1)$-density condition for $\mu_k$. From Step 1, there exists $\gamma_1'>0$ such that
\[
|E_L \cap Q_1| \ge \gamma_1'.
\]
Scaling back to the original cube $Q=L Q_1$,
\[
|E\cap Q| = L^d |E_L \cap Q_1| \ge \gamma_1' L^d = \gamma_1' |Q|.
\]
Thus $E$ is $(\gamma_1', L)$-relatively dense with respect to Lebesgue measure.

This completes the proof.
\end{proof}
\begin{remark} \label{RR3}
% [R8-LANG-019] Distinguished invariance of the notion from invariance of the thickness parameters.
The orientation of the cubes does not affect the qualitative notion of thickness, although the parameters may change. More specifically, if a measurable subset $E\subset \mathbb{R}^d$ is a $(\gamma, L)$-relatively dense subset, and $Q'$ denotes an arbitrary rotation of $Q$ about its center, since $x+LQ\subset x+L\sqrt{d}Q'$, we have
\[
|E\cap (x+L\sqrt{d} Q')|\geq \gamma' |x+L\sqrt{d}Q'|
\]
 where $\gamma'$ only depends on $d,\gamma$.

\end{remark}
\section{Necessity of the Logvinenko--Sereda Theorem}
\label{sec:necessity}
 
% [R8-LANG-020] Removed the overly broad claim that explicit cube integration is impossible for Dunkl weights.
Our argument is motivated by the Fourier--Bessel setting in \cite{HX}. There, the product weight $\mathrm{d}\mu_\nu(x)=\prod_{i=1}^d x_i^{2\nu_i+1}\,\mathrm{d}x$ allows explicit integration over coordinate cubes. For a general Dunkl weight, we instead use a uniform comparison of the measures of concentric cubes.

\begin{lemma}\label{L4001}
Let
\[
Q=\biggl(-\frac12,\frac12\biggr)^d,\quad Q_0=x_0+LQ,\quad Q_1=x_0+(L+M)Q,
\]
where $M\ge 0$ is fixed. Then there exists a constant $C>0$ depending only on $d,R,k$ such that for all $x_0\in\mathbb{R}^d$ and $L>0$,
\begin{equation}\label{E4001}
0\le 1-\frac{\mu_k(Q_0)}{\mu_k(Q_1)}\le C\,\frac{M}{L+M}.
\end{equation}

Therefore,
\begin{equation}\label{E4002}
\lim_{L\to\infty}\sup_{x_0\in\mathbb{R}^d}\left|\frac{\mu_k(x_0+LQ)}{\mu_k(x_0+(L+M)Q)}-1\right|=0.
\end{equation}

In particular, for any $\delta>0$, there exists $K>0$ such that for all $L>K$, uniformly in $x_0\in\mathbb{R}^d$,
\[
\frac{\mu_k(Q_0)}{\mu_k(Q_1)}>1-\delta.
\]

\end{lemma}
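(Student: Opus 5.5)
We need to bound $\mu_k(Q_1\setminus Q_0)/\mu_k(Q_1)\le CM/(L+M)$. The lower bound $0\le$ is immediate from $Q_0\subset Q_1$. The plan is to decompose the shell $Q_1\setminus Q_0$ into $2d$ slabs, compare each slab's weighted measure with that of $Q_1$ by a one-dimensional argument along lines, and then sum.

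\textbf{Step 1: the slabs.} Set $s=L+M$. The shell $Q_1\setminus Q_0$ is contained in the union of $2d$ slabs of the form
\[
S_{j,\pm}=\Bigl\{x\in Q_1:\ \pm(x_j-x_{0,j})\in\bigl[\tfrac L2,\tfrac s2\bigr)\Bigr\},
\]
each of thickness $M/2$ in the $e_j$ direction.

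\textbf{Step 2: the one-dimensional comparison.} Fix $j$ and the sign, and write $x=(x',x_j)$. For each fixed $x'$, the function $t\mapsto w(x',t)$ is a product $\prod_{\alpha}|a_\alpha+b_\alpha t|^{2k_\alpha}$. Each factor is a power of the absolute value of an affine function of $t$, hence a polynomial-type function of bounded "degree" $2\gamma_k$. The key claim is
\[
\int_{J}w(x',t)\,\mathrm{d}t\le C\,\frac{|J|}{|I|}\int_I w(x',t)\,\mathrm{d}t
\]
for any subinterval $J\subset I$, where $I$ has length $s$ and $C$ depends only on $k$ and $R_+$.

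\textbf{Step 3: proving the comparison.} I would deduce this from a doubling/Remez-type estimate for generalized polynomials $\prod|t-c_i|^{\beta_i}$. Each factor $|a_\alpha+b_\alpha t|^{2k_\alpha}$ satisfies
\[
\sup_I|a+bt|\le C\,\frac{1}{|I|}\int_I|a+bt|\,\mathrm{d}t .
\]
More precisely, I would use the Remez inequality from Section 2, or Lemma~\ref{lem:carbery-wright} with $m=1$, factor by factor to get
\[
\sup_I w(x',\cdot)\le C\,\frac{1}{|I|}\int_I w(x',t)\,\mathrm{d}t .
\]
Since the factors are several, I would handle the product as follows. Remove from $I$ the set of $t$ where some $|a_\alpha+b_\alpha t|\le \eta\sup_I|a_\alpha+b_\alpha t|$. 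Its measure is at most $|R_+|\cdot 2\eta|I|$, which is at most $|I|/2$ for $\eta$ small. On the remaining set, $w\ge \eta^{2\gamma_k}\sup_I w$. This gives $\int_I w\ge \tfrac12|I|\eta^{2\gamma_k}\sup_I w$, and then $\int_J w\le |J|\sup_I w$ yields the comparison.

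\textbf{Step 4: integrating and summing.} Integrate the comparison over $x'$ in the $(d-1)$-dimensional face, with $J$ the slab interval of length $M/2$ and $I$ the full interval of length $s$. This gives $\mu_k(S_{j,\pm})\le C\frac{M}{s}\mu_k(Q_1)$. Summing over the $2d$ slabs yields \eqref{E4001} with $C$ depending only on $d,R,k$. This argument is also uniform in $x_0$ and $L$, so the homogeneity used in Section~3 is not even needed.

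\textbf{Consequences.} Statement \eqref{E4002} follows because $M/(L+M)\to0$ as $L\to\infty$ with $M$ fixed. Given $\delta$, one then takes $K=M(C/\delta-1)$.

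\textbf{Main obstacle.} The main obstacle is Step 3, the uniform comparison for products of powers of affine functions. The uniformity must hold in $x'$, with the constant independent of the location of the hyperplanes relative to $I$. The sublevel-set removal argument above is what I would try first. Its only input is the elementary fact that $\{t\in I:|a+bt|\le\eta\sup_I|a+bt|\}$ has length at most $2\eta|I|$.
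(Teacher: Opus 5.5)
Your proposal is correct and follows essentially the paper's route. You split the shell into coordinate slabs, prove the one-dimensional estimate $\int_J w\le C\frac{|J|}{|I|}\int_I w$ by discarding small neighbourhoods of the zeros of the affine factors to obtain a half-measure set where $w$ is comparable to $\sup_I w$, and then apply Fubini and sum. The only differences are cosmetic: the paper removes fixed-width neighbourhoods of the roots rather than your relative sublevel sets, and it reduces to a unit cube by homogeneity, which, as you note, is unnecessary because the one-dimensional estimate is already scale-invariant.
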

\begin{proof}
% [R8-LANG-041] Improved precision, flow, or concision in the proof exposition.
The cases $M=0$ and $R_+=\emptyset$ are immediate. We therefore assume $M>0$ and $R_+\ne\emptyset$.
For Lebesgue measure, the volume ratio of two concentric cubes tends to $1$ as $L$ grows. To account for the weight, we show that the strip between the cubes carries only a small fraction of the total mass. The proof proceeds in three steps: we estimate the weight, control the strip, and then complete the argument.

\emph{Step 1. A one-dimensional weight estimate.}

 Fix $m\ge1$ and $\kappa\ge 0$. Let
\[
h(t)=A\prod_{j=1}^{r}|a_j t+b_j|^{\beta_j},
\]
where
\[
A\ge 0,\qquad r\le m,\qquad \beta_j\ge 0,\qquad \sum_{j=1}^{r}\beta_j\le \kappa.
\]
There is a constant depending only on $m$ and $\kappa$, which may be chosen as
\[
C_{m,\kappa}=2(1+4m)^{\kappa},
\]
such that for any finite interval $I\subset\mathbb{R}$ and any measurable set $E\subset I$, we have
\[
\int_{E} h(t)\,\mathrm{d}t \le C_{m,\kappa}\frac{|E|}{|I|}\int_{I} h(t)\,\mathrm{d}t.
\]

By an affine transformation, we may reduce the interval $I$ to $[0,1]$.
If a factor vanishes identically with a positive exponent, then $h\equiv0$ and there is nothing to prove. All nonzero constant factors corresponding to $a_j=0$ can be absorbed into $A$;
for $a_j\neq 0$, write
\[
|a_j t + b_j|^{\beta_j}=|a_j|^{\beta_j}|t-\rho_j|^{\beta_j},\qquad \rho_j=-\frac{b_j}{a_j}.
\]
Thus it suffices to consider
\[
h(t)=A_0\prod_{j=1}^{r}|t-\rho_j|^{\beta_j},\qquad t\in[0,1].
\]
Let
\[
\eta:=\frac{1}{4m},
\]
and define
\[
F:=[0,1]\setminus \bigcup_{j=1}^{r}(\rho_j-\eta,\rho_j+\eta).
\]
Since $r\le m$, we have
\begin{equation}
|F|\ge 1-2r\eta\ge 1-\frac12=\frac12.
\end{equation}

For any $t\in F$, we have
\[
|t-\rho_j|\ge \eta.
\]
Therefore, for any $s\in[0,1]$,
\[
\begin{aligned}
|s-\rho_j| &\le |s-t|+|t-\rho_j| \\
&\le 1+|t-\rho_j| \\
&\le (1+\eta^{-1})|t-\rho_j| \\
&=(1+4m)|t-\rho_j|.
\end{aligned}
\]
Hence
\[
h(s)\le (1+4m)^{\sum_j\beta_j}h(t)\le (1+4m)^{\kappa}h(t).
\]
Taking the supremum over $s\in[0,1]$, we obtain
\[
\|h\|_{L^{\infty}(0,1)}\le (1+4m)^{\kappa}h(t),\qquad t\in F.
\]
Integrating over $t\in F$, and using $|F|\ge 1/2$, we get
\[
\begin{aligned}
\int_{0}^{1}h(t)\,\mathrm{d}t &\ge \int_{F}h(t)\,\mathrm{d}t \\
&\ge \frac{|F|}{(1+4m)^{\kappa}}\|h\|_{L^{\infty}(0,1)} \\
&\ge \frac{1}{2(1+4m)^{\kappa}}\|h\|_{L^{\infty}(0,1)}.
\end{aligned}
\]
Therefore
\[
\|h\|_{L^{\infty}(0,1)}\le 2(1+4m)^{\kappa}\int_{0}^{1}h(t)\,\mathrm{d}t.
\]
Thus for any $E\subset [0,1]$,
\[
\int_{E} h(t)\,\mathrm{d}t \le |E|\,\|h\|_{L^{\infty}(0,1)}
\le 2(1+4m)^{\kappa}|E|\int_{0}^{1} h(t)\,\mathrm{d}t.
\]
Scaling back to the general interval $I$ via an affine transformation, we obtain
\[
\int_{E} h(t)\,\mathrm{d}t \le 2(1+4m)^{\kappa}\frac{|E|}{|I|}\int_{I} h(t)\,\mathrm{d}t.
\]
This proves the claim.

\emph{Step 2. A uniform estimate for the boundary strip.}

Fix
\[
a\in\mathbb{R}^d,\qquad \varepsilon>0,
\]
and define
\[
C_{\mathrm{in}}:=a+Q,\qquad C_{\mathrm{out}}:=a+(1+\varepsilon)Q.
\]
Since $Q=\bigl(-1/2,1/2\bigr)^d$, we have
\begin{equation}\label{EPL05}
C_{\mathrm{in}}\subset C_{\mathrm{out}}.
\end{equation}
For the $i$-th coordinate, let
\[
I_i:=\biggl(a_i-\frac{1+\varepsilon}{2},\,a_i+\frac{1+\varepsilon}{2}\biggr),
\]
and
\[
J_i:=\biggl(a_i-\frac12,\,a_i+\frac12\biggr).
\]
Then
\[
|I_i|=1+\varepsilon,\qquad |I_i\setminus J_i|=\varepsilon.
\]
Define the $i$-th boundary strip
\[
S_i:=\{x\in C_{\mathrm{out}}: x_i\in I_i\setminus J_i\}.
\]
Clearly
\begin{equation}
C_{\mathrm{out}}\setminus C_{\mathrm{in}}\subset \bigcup_{i=1}^{d} S_i.
\end{equation}
Fix all coordinates except the $i$-th one:
\[
\widehat{x}_i = (x_1,\dots,x_{i-1},x_{i+1},\dots,x_d).
\]
With the remaining coordinates fixed, write $w$ as a function of $t=x_i$:
\[
\begin{aligned}
h_{\widehat{x}_i}(t) &:= w(x_1,\dots,x_{i-1},t,x_{i+1},\dots,x_d) \\
&= \prod_{\alpha\in R_+} \biggl| \alpha_i t + \sum_{j\neq i}\alpha_j x_j \biggr|^{2k_\alpha}.
\end{aligned}
\]
Note that the number of factors is at most $|R_+|$ and the sum of all exponents is $2\sum_{\alpha\in R_+} k(\alpha)=\kappa$,

Applying the claim in Step 1 with $m=|R_+|$ and $C_* := 2\bigl(1+4|R_+|\bigr)^{\kappa}$, we obtain

\begin{equation}
\int_{I_i\setminus J_i} h_{\widehat{x}_i}(t)\,\mathrm{d}t
\le C_*\frac{\varepsilon}{1+\varepsilon}\int_{I_i} h_{\widehat{x}_i}(t)\,\mathrm{d}t.
\end{equation}
Integrating over the remaining $d-1$ coordinates and applying Fubini's theorem, we obtain
\begin{equation}
\mu_k(S_i)\le C_*\frac{\varepsilon}{1+\varepsilon}\,\mu_k(C_{\mathrm{out}}).
\end{equation}
Summing over $i=1,\dots,d$ and using \eqref{EPL05}, we obtain
\[
\begin{aligned}
\mu_k(C_{\mathrm{out}}\setminus C_{\mathrm{in}})
&\le \sum_{i=1}^{d}\mu_k(S_i) \\
&\le d C_*\frac{\varepsilon}{1+\varepsilon}\,\mu_k(C_{\mathrm{out}}).
\end{aligned}
\]
Therefore
\[
\begin{aligned}
1-\frac{\mu_k(C_{\mathrm{in}})}{\mu_k(C_{\mathrm{out}})}
&=\frac{\mu_k(C_{\mathrm{out}}\setminus C_{\mathrm{in}})}{\mu_k(C_{\mathrm{out}})} \\
&\le d C_*\frac{\varepsilon}{1+\varepsilon}.
\end{aligned}
\]

Hence for all $a\in\mathbb{R}^d$, uniformly,
\begin{equation}\label{EPL11}
\frac{\mu_k(a+Q)}{\mu_k\bigl(a+(1+\varepsilon)Q\bigr)}
\ge 1- d C_*\frac{\varepsilon}{1+\varepsilon}.
\end{equation}

\emph{Step 3. Rescaling by homogeneity.}

\[
w(\lambda x)=\lambda^{\kappa} w(x),\quad \lambda>0.
\]
Thus for any measurable set $A$,
\[
\mu_k(\lambda A)=\lambda^{d_k}\mu_k(A).
\]
Set
\[
a:=\frac{x_0}{L},\qquad \varepsilon:=\frac{M}{L}.
\]
Then
\[
Q_0=x_0+LQ=L(a+Q),
\]
and
\[
Q_1=x_0+(L+M)Q=L\bigl(a+(1+\varepsilon)Q\bigr).
\]
By homogeneity,
\[
\mu_k(Q_0)=L^{d_k}\mu_k(a+Q),
\]
and
\[
\mu_k(Q_1)=L^{d_k}\mu_k\bigl(a+(1+\varepsilon)Q\bigr).
\]
Therefore
\[
\frac{\mu_k(Q_0)}{\mu_k(Q_1)}
=\frac{\mu_k(a+Q)}{\mu_k\bigl(a+(1+\varepsilon)Q\bigr)}.
\]
Substituting \eqref{EPL11}, we obtain
\begin{equation}\label{EPL15}
\begin{aligned}
\frac{\mu_k(Q_0)}{\mu_k(Q_1)}
&\ge 1- d C_*\frac{M/L}{1+M/L} \\
&=1- d C_*\frac{M}{L+M}.
\end{aligned}
\end{equation}
We may therefore take
\[
C_{d,R,k}:= d C_* = 2d\bigl(1+4|R_+|\bigr)^{\kappa},
\]
so that
\[
0\le 1-\frac{\mu_k(Q_0)}{\mu_k(Q_1)}
\le C_{d,R,k}\frac{M}{L+M}.
\]
The constant in this estimate is independent of $x_0$.

Let $\delta>0$ be arbitrary. Set
\[
K:=\frac{C_{d,R,k}M}{\delta}.
\]
Whenever $L>K$,
\[
C_{d,R,k}\frac{M}{L+M}<C_{d,R,k}\frac{M}{L}<\delta.
\]
Thus by \eqref{EPL15}, for every $x_0\in\mathbb{R}^d$, we have
\[
\frac{\mu_k(x_0+LQ)}{\mu_k\bigl(x_0+(L+M)Q\bigr)}>1-\delta.
\]
Since $K$ is independent of $x_0$, the conclusion holds uniformly in $x_0$, giving \eqref{E4002}.

\end{proof}

% [R8-LANG-040] Improved precision, flow, or concision in the proof exposition.
We now establish the necessity of thickness in the Logvinenko--Sereda theorem.

\begin{lemma}[Necessity]
\label{thm:necessity}
Assume $k_\alpha>0$ for every root. Fix $N>0$. Let $E\subset\mathbb{R}^d$ be measurable. Suppose there exists a constant $c\in(0,1)$ such that for every $f \in L_k^2(\mathbb{R}^d)$ with $\supp(\mathcal{F}_k f) \subset \overline{B(0,N)}$,
\begin{equation}
\label{eq:spectral}
c\| f \|_{L_k^2(\mathbb{R}^d)}^2 \le  \| f \|_{L_k^2(E)}^2.
\end{equation}
Then $E$ is relatively dense with respect to $\mu_k$.
\end{lemma}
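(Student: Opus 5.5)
We argue by testing the spectral inequality \eqref{eq:spectral} on Dunkl translates of a fixed band-limited bump and using the localization estimates of Lemma~\ref{lem:translation-estimate} together with the cube comparison of Lemma~\ref{L4001}.

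\emph{Construction of the test functions.} Fix a radial $\psi\in C_c^\infty(B(0,N))$ with $\psi\not\equiv0$, and set $\phi=\mathcal{F}_k^{-1}\psi$. Then $\phi$ is radial and Schwartz, and $\supp\mathcal{F}_k\phi\subset\overline{B(0,N)}$. For $x\in\mathbb{R}^d$ put $f_x=\tau_x\phi$. Since $\mathcal{F}_k f_x(\xi)=E_k(x,-i\xi)\psi(\xi)$, the spectral support is preserved, so \eqref{eq:spectral} applies to every $f_x$.

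\emph{Lower bound on $\|f_x\|$.} By Plancherel,
\[
\|f_x\|_{L_k^2}^2=\int|E_k(x,-i\xi)|^2|\psi(\xi)|^2\,\mathrm{d}\mu_k(\xi).
\]
This quantity is not translation-invariant, so we need a bound from below comparable to $\mu_k(B(x,1))^{-1}$. The natural plan is to use the tail bound of Lemma~\ref{lem:translation-estimate} together with a local lower bound on $f_x$ near $x$. Concretely, one can choose $\phi=\mathcal{F}_k^{-1}(|\psi|^2)$-type functions, or use $\phi*_k\phi$, so that $f_x(x)=\tau_x\phi(x)=c\int|E_k(x,-i\xi)|^2|\psi|^2\,\mathrm{d}\mu_k\ge0$. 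Then $\tau_x\phi(x)=c\|f_x^{(\psi)}\|^2$ ties the pointwise value to a norm.

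A cleaner route, and the one I would try first, is to normalize by $\|f_x\|$ directly. We do not need the exact size; we only need the mass to concentrate near the orbit $Gx$, and that concentration follows from comparing the tail with the whole norm.

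\emph{Tail estimate.} Lemma~\ref{lem:translation-estimate} gives
\[
|f_x(y)|^2\le C\mu_k(B(x,1))^{-2}(1+d_G(x,y))^{-2M}.
\]
Integrating over $\{d_G(x,y)>R\}$, and using the doubling bound \eqref{EWBOUND} to estimate $\mu_k(B(\sigma x,r))\lesssim (1+r)^{d_k}\mu_k(B(x,1))$, we obtain
\[
\int_{d_G(x,y)>R}|f_x|^2\,\mathrm{d}\mu_k\le C R^{-\epsilon}\mu_k(B(x,1))^{-1}.
\]

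\emph{Main obstacle.} We must show $\|f_x\|^2\ge c'\mu_k(B(x,1))^{-1}$ uniformly in $x$. I would obtain this from the pointwise identity
\[
\tau_x\Phi(x)=\|\tau_x\phi\|_{L_k^2}^2\quad\text{for }\Phi=\phi*_k\phi^\vee,
\]
combined with a lower bound for $\tau_x\Phi$ on $B(x,1)$. That lower bound could come from the heat-kernel lower bound of Lemma~\ref{LBOUND}, by taking $\psi^2$ comparable to a truncated Gaussian. This is the delicate step. If it fails, I would replace the normalization by $\mu_k(B(x,1))^{-1}$ with a ratio argument: show that a fixed fraction of $\|f_x\|^2$ lies in $\bigcup_{\sigma}B(\sigma x,R)$, using the same tail bound relative to $\|f_x\|^2$.

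\emph{Conclusion.} Choose $R$ so that the tail is at most $\tfrac c2\|f_x\|^2$. Then \eqref{eq:spectral} yields
\[
\tfrac c2\|f_x\|^2\le\int_{E\cap\bigcup_\sigma B(\sigma x,R)}|f_x|^2\,\mathrm{d}\mu_k\le C\mu_k(B(x,1))^{-2}\sum_{\sigma\in G}\mu_k\bigl(E\cap B(\sigma x,R)\bigr).
\]
This gives a lower bound
\[
\sum_{\sigma\in G}\mu_k\bigl(E\cap B(\sigma x,R)\bigr)\ge c''\mu_k(B(x,1))\ge c'''\mu_k(x+LQ)
\]
for $L\sim R$, again using doubling.

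The sum over $G$ still has to be turned into a single cube. The orbit points $\sigma x$ are spread out, so the lower bound above only says that $E$ has mass near some $\sigma x$, not necessarily near $x$. To localize, apply the argument at every point $x$ and use $G$-structure: pick $x$ deep inside a Weyl chamber, where the reflections $\sigma x$ are far from $x$. There I would instead use test functions $f_x$ localized via the tail factor $(1+|x-y|)^{-1}$ only near $x$. That factor is too weak on its own, so the honest fallback is a covering argument: cubes $x+LQ$ with $L$ larger than $\max_\sigma|x-\sigma x|$ contain the whole orbit when $x$ is near the origin. For $|x|$ large, this orbit issue is exactly the main difficulty, and I expect the paper handles it by a cube-counting argument using Lemma~\ref{L4001} together with the Euclidean reduction of Theorem~\ref{TEV}.
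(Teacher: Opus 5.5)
\textbf{The orbit-sum bound is too weak.} Your proposal stops short of a proof, and the step you leave open cannot be closed the way you suggest. Even granting $\|\tau_x\phi\|_{L_k^2}^2\ge c'\mu_k(B(x,1))^{-1}$, your conclusion only bounds $\sum_{\sigma\in G}\mu_k(E\cap B(\sigma x,R))$ from below. No covering or cube-counting argument turns this into thickness, because the orbit-sum bound is strictly weaker. Take $d=1$, $G=\{x\mapsto\pm x\}$ and $E=[0,\infty)$. Since $E\cup(-E)=\mathbb{R}$ and $\mu_k$ is $G$-invariant, $\sum_\sigma\mu_k(E\cap B(\sigma x,R))\ge\mu_k(B(x,R))$ for all $x,R$. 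Yet $\mu_k(E\cap(x+LQ))=0$ for $x<-L/2$. Your fallback of taking $L>\max_\sigma|x-\sigma x|$ is not admissible, since $L$ must not depend on $x$.

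\textbf{The missing idea.} It is the factor you dismissed: $(1+|x-y|)^{-1}$ in Lemma~\ref{lem:translation-estimate} is not too weak. Square it and combine it with the orbit factor, which you already integrate using \eqref{EWBOUND} and $G$-invariance. For $2M>d+d_k$ this gives
\[
\int_{|y-x|>a}|\tau_x\phi(y)|^2\,\mathrm{d}\mu_k(y)\le\frac{C}{(1+a)^2\,\mu_k(B(x,1))^{2}}\int_{\mathbb{R}^d}(1+d_G(x,y))^{-2M}w(y)\,\mathrm{d}y\le\frac{C'}{(1+a)^2\,\mu_k(B(x,1))}.
\]
This is a tail estimate relative to $x$ itself rather than to the orbit $Gx$. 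Use it in place of your $d_G$-tail. Together with the spectral inequality and $\|\tau_x\phi\|_{L^\infty}\lesssim\mu_k(B(x,1))^{-1}$, it gives $\mu_k(E\cap B(x,a))\gtrsim\mu_k(B(x,1))\gtrsim\mu_k(x+2aQ)$ directly, with no sum over $G$.

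\textbf{The norm lower bound is only sketched.} The other step you call delicate, the uniform lower bound on $\|\tau_x\phi\|_{L_k^2}^2$, is not proved either. It can be completed roughly as you suggest. Take $\psi=e^{-t|\xi|^2}\chi$ with $\chi=1$ on $B(0,N/2)$. Compare with the on-diagonal bound $h_{2t}(x,x)\ge C_l\mu_k(B(x,\sqrt{2t}))^{-1}$ from Lemma~\ref{LBOUND}. Then absorb the frequencies $|\xi|\ge N/2$ using the upper bound and doubling, for $t$ large depending on $N$.

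\textbf{How the paper avoids both steps.} It tests with the smoothed indicator $f=\chi_{Q_1}*_k\phi_1$ of a large cube, where $\phi_1=c_\phi\phi^2$ is nonnegative, radial, of unit mass and band-limited to $\overline{B(0,N)}$.
\begin{enumerate}
\item[(a)] Positivity of radial translations gives $0\le f\le1$, hence $\|f\|_{L_k^2}^2\le\mu_k(Q_1)$ with no lower-bound estimate needed.
\item[(b)] The same $(1+|x-y|)^{-1}$ factor gives the $L^1$ tail bound \eqref{eq:radial-tail-review}, hence $f\ge1-\varepsilon$ on the inner cube $Q_0$.
\item[(c)] The uniform comparison $\mu_k(Q_0)/\mu_k(Q_1)\to1$ of Lemma~\ref{L4001} then yields $\mu_k(E\cap Q_0)\ge\frac c2\mu_k(Q_0)$.
\end{enumerate}
So positivity plus large cubes replaces both of your delicate steps. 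Your single-translate route, once repaired as above, works at a fixed scale $a$, but it additionally needs the heat-kernel lower bound.
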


\begin{proof}
Choose a nonzero real radial $\psi\in C_c^\infty(B(0,N/2))$ and let $\phi=\mathcal{F}_k^{-1}\psi$. Then $\phi$ is real, radial, and Schwartz, but it need not be nonnegative. Set
\[
\phi_1=c_\phi\phi^2,\qquad c_\phi=\left(\int\phi^2\,\mathrm{d}\mu_k\right)^{-1}.
\]
Thus $\phi_1\ge0$, $\int\phi_1\,\mathrm{d}\mu_k=1$, and
\[
\mathcal{F}_k\phi_1=c_\phi c_k(\psi*_k\psi),\qquad
\supp\mathcal{F}_k\phi_1\subset \overline{B(0,N)},
\]
by the translation support property in Section~\ref{subsec:translation} and the normalization convention in Section~2.
Write $q(x,y)=\tau_x\phi_1(y)$. Radiality gives symmetry, positivity, and unit mass in either variable. Lemma~\ref{lem:translation-estimate}, applied with the variables interchanged, and \eqref{EWBOUND} imply, for $s>d$,
\[
q(x,y)w(y)\le C_s(1+|x-y|)^{-1}(1+d_G(x,y))^{-s}.
\]
Since
\[
(1+d_G(x,y))^{-s}\le\sum_{\sigma\in G}(1+|x-\sigma y|)^{-s},
\]
its Lebesgue integral in $y$ is uniformly bounded. Therefore
\begin{equation}\label{eq:radial-tail-review}
\sup_x\int_{|x-y|\ge a}q(x,y)\,\mathrm{d}\mu_k(y)\le\frac{C_s'}{1+a}\quad(a>0).
\end{equation}
Let $Q_0=x_0+LQ$ and $Q_1=x_0+(L+M)Q$, and define $f=\chi_{Q_1}*_k\phi_1$. Then $f$ has Dunkl frequency support in $B(0,N)$ and $0\le f\le1$. Young's inequality gives
\begin{equation}\label{E2}
\|f\|_{L_k^2}^2\le\|f\|_{L_k^1}\le\mu_k(Q_1).
\end{equation}
If $x\in Q_0$ and $y\notin Q_1$, then $|x-y|\ge M/2$. Given $\varepsilon>0$, choose $M$ in \eqref{eq:radial-tail-review}
and use the identity
\[
f(x) = \int_{Q_1} \tau_x \phi_1(y)\,\mathrm{d}\mu_k(y)
= 1 - \int_{\mathbb{R}^d \setminus Q_1} \tau_x \phi_1(y)\,\mathrm{d}\mu_k(y).
\]
to obtain $f(x)\ge1-\varepsilon$ on $Q_0$, uniformly in $x_0,L$. Hence
\begin{equation}\label{E3}
\|f\|_{L_k^2(Q_0)}^2\ge(1-\varepsilon)^2\mu_k(Q_0).
\end{equation}
Using \eqref{eq:spectral},
\begin{align*}
\mu_k(E\cap Q_0) &\ge \|f\|_{L_k^2(E\cap Q_0)}^2 \\
&= \|f\|_{L_k^2(Q_0)}^2 - \|f\|_{L_k^2(E^c\cap Q_0)}^2 \\
&\ge \|f\|_{L_k^2(Q_0)}^2 - \|f\|_{L_k^2(E^c)}^2 \\
&= \|f\|_{L_k^2(Q_0)}^2 - \|f\|_{L_k^2(\mathbb{R}^d)}^2 + \|f\|_{L_k^2(E)}^2 \\
&\ge \|f\|_{L_k^2(Q_0)}^2 - (1-c)\|f\|_{L_k^2(\mathbb{R}^d)}^2\\
&\ge(1-\varepsilon)^2\mu_k(Q_0)-(1-c)\mu_k(Q_1).
\end{align*}

Choose $\varepsilon$ sufficiently small and then $L$ sufficiently large using Lemma~\ref{L4001}. Because the convergence is uniform in $x_0$, these choices ensure
\[
\mu_k(E\cap Q_0)\ge\frac c2\mu_k(Q_0)\qquad\text{for every }x_0\in\mathbb{R}^d.
\]
This proves the assertion.
\end{proof}

\section{Sufficiency of the Logvinenko--Sereda Theorem}
In this section, we prove the converse: if $E$ is relatively dense with respect to $\mu_k$, then the spectral inequality holds for all band-limited functions. Our method is primarily inspired by \cite{GWW} and \cite{HX}. Related approaches appear in \cite{XZ}. % [R8-LANG-P2-15] Polished the exposition without changing the argument.
 Unlike Lebesgue measure in the classical Fourier setting, the Dunkl measure has a density that vanishes on root hyperplanes. This degeneracy requires additional estimates. % [R8-CHECK-P07] HX publication metadata were located, but its full proof and claimed quadratic bandwidth exponent were not independently verified. Supply an exact theorem/equation citation for this comparison.
Another difference from the treatment for the Hankel transform in \cite{HX} is that we aim to establish the spectral inequality \eqref{ESPEC}. Consequently, we require sharper estimates in terms of $N$, yielding $e^{C(1+N)}$ instead of $e^{C(1+N)^2}$ obtained in \cite{HX}.

The overall framework is inspired by \cite{GWW}. However, for the Dunkl transform, the whole space is partitioned by the root hyperplanes $\langle \alpha, x\rangle=0$. Although band-limited functions are entire, the coefficients in the conversion from Dunkl derivatives to ordinary derivatives are singular on these hyperplanes, so the original argument does not apply directly. To address this issue, we first derive global estimates for the mass on and near these hyperplanes. We begin with a lemma showing that sets with small Lebesgue measure in unit cubes carry only a small amount of the $L_k^2$-mass of band-limited functions.

The following lemma is a small-set mass estimate, related to the weak uncertainty estimates discussed in \cite{CMW1}.

\begin{lemma}
\label{lem:small-mass}
Assume $k_\alpha>0$ for every root. Let $S \subset \mathbb{R}^d$ be measurable and satisfy
\begin{equation}\label{E5}
|S \cap Q| \le \delta  |Q|
\end{equation}
for every unit cube $Q$, for some $\delta > 0$. Then for every $f \in L_k^2(\mathbb{R}^d)$ with $\supp(\mathcal{F}_k f) \subset \overline{B(0,1)}$, there exists a constant $C = C(d, R_+, k,\delta )$ such that
\[
\| f \|_{L_k^2(S)} \le C(d, R_+, k,\delta ) \, \| f \|_{L_k^2(\mathbb{R}^d)},
\]
where $\lim_{\delta \to 0} C(d, R_+, k,\delta ) = 0$. % [R8-NOTATION-P2-14] Included the root system among the fixed structural parameters.
In particular, this constant tends to zero as $\delta\to0$ with $d,R_+$, and $k$ fixed.
\end{lemma}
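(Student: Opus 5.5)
The plan is to reproduce $f$ from itself and bound it pointwise on each unit cube by an $L^2_k$ average over a neighbouring region. Then a Hölder-type argument lets us exploit the small Lebesgue density of $S$, with the weight handled through the one-dimensional estimate of Lemma~\ref{L4001} (Step 1).

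\emph{Reproducing formula.} Fix a radial $\psi\in C_c^\infty(B(0,2))$ with $\psi\equiv1$ on $\overline{B(0,1)}$, and put $\phi=c_k\mathcal{F}_k^{-1}\psi$, which is radial and Schwartz. Since $\supp\mathcal{F}_kf\subset\overline{B(0,1)}$, the convolution identity $\mathcal{F}_k(f*_k\phi)=c_k^{-1}\mathcal{F}_kf\,\mathcal{F}_k\phi$ gives
\[
f(x)=\int f(y)\,\tau_x\phi^\vee(y)\,\mathrm{d}\mu_k(y).
\]
Lemma~\ref{lem:translation-estimate} together with \eqref{EWBOUND} then yields the kernel bound
\[
|\tau_x\phi^\vee(y)|\le C_s\,\mu_k(B(x,1))^{-1}(1+d_G(x,y))^{-s}.
\]

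\emph{Pointwise bound.} By Cauchy--Schwarz,
\[
|f(x)|^2\le \Big(\int |\tau_x\phi^\vee(y)|\,\mathrm{d}\mu_k(y)\Big)\int |f(y)|^2|\tau_x\phi^\vee(y)|\,\mathrm{d}\mu_k(y).
\]
The first factor is bounded uniformly in $x$; this follows as in the proof of Lemma~\ref{thm:necessity}, by using $(1+d_G)^{-s}\le\sum_{\sigma}(1+|x-\sigma y|)^{-s}$ and comparing $w(y)$ with $\mu_k(B(x,1))$ via \eqref{EWBOUND}. Integrating over $S$ with Fubini gives
\[
\|f\|_{L^2_k(S)}^2\le C\int |f(y)|^2\Big(\int_S|\tau_x\phi^\vee(y)|\,\mathrm{d}\mu_k(x)\Big)\mathrm{d}\mu_k(y).
\]
It therefore suffices to show
\[
\sup_y\int_S|\tau_x\phi^\vee(y)|\,\mathrm{d}\mu_k(x)\le \eta(\delta)\to0 \quad\text{as }\delta\to0.
\]
Taking $C=\sqrt{C\eta(\delta)}$ then gives the claim.

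\emph{Main estimate.} Bound the $x$-integral cube by cube. Cover $\mathbb{R}^d$ by unit cubes $Q_j$ and let $Q_j^*$ be the concentric cube of side $2$. By the kernel bound the integrand on $Q_j$ is at most
\[
C_s\sup_{x\in Q_j}\mu_k(B(x,1))^{-1}(1+\operatorname{dist}_G(Q_j,y))^{-s}w(x),
\]
and we need $\int_{S\cap Q_j}w\,\mathrm{d}x\le\eta_0(\delta)\,\mu_k(Q_j)$ with $\eta_0(\delta)\to0$. This is the key step and the main obstacle, because a small Lebesgue fraction need not carry small $w$-mass in general. It does here: $w$ is a product of at most $|R_+|$ powers of linear forms, so on $Q_j$ we have $\sup_{Q_j}w\le C\,\mu_k(Q_j)$ with $C=C(d,R_+,k)$. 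This follows from Step 1 of Lemma~\ref{L4001} (the bound $\|h\|_\infty\le 2(1+4m)^\kappa\int h$) applied on each line and iterated over the $d$ coordinates, or alternatively from the doubling estimate \eqref{EWBOUND}. Hence
\[
\int_{S\cap Q_j}w\le |S\cap Q_j|\sup_{Q_j}w\le C\delta\,\mu_k(Q_j).
\]
Also $\mu_k(Q_j)\le C\mu_k(B(x,1))$ for $x\in Q_j$ by \eqref{EWBOUND}.

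\emph{Conclusion.} Summing over $j$ gives
\[
\int_S|\tau_x\phi^\vee(y)|\,\mathrm{d}\mu_k(x)\le C\delta\sum_j(1+d_G(Q_j,y))^{-s}.
\]
For $s>d$ the sum is bounded by $|G|$ times a lattice sum, uniformly in $y$. So $\eta(\delta)=C\delta$ and $C(d,R_+,k,\delta)=C\sqrt{\delta}$. The points requiring care are the uniform sup-versus-average comparison for $w$ on unit cubes, and verifying the kernel decay in the form with $\mu_k(B(x,1))$ in the variable being integrated; the roles of $x$ and $y$ may be swapped using the symmetry $\tau_x\phi^\vee(y)=\tau_{-y}\phi(-x)$ for radial $\phi$.
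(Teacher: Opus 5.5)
Your proposal is correct and is essentially the paper's proof. It uses the same reproducing kernel $c_k\mathcal{F}_k^{-1}\psi$ and Schur's test, which you carry out by hand via Cauchy--Schwarz and Fubini, absorbs $w(x)$ into the factor $\mu_k(B(x,1))^{-1}$ of Lemma~\ref{lem:translation-estimate} via \eqref{EWBOUND} (cube-wise in your version, pointwise in the paper's), and ends with the same orbit-lattice sum and rate $C\sqrt{\delta}$. One small imprecision: the uniform bound on $\int|\tau_x\phi^\vee(y)|\,\mathrm{d}\mu_k(y)$ is most directly Lemma~\ref{lem:radial-translation}, since $w(y)/\mu_k(B(x,1))$ is bounded only after using the $G$-invariance of $w$ and spending some of the decay in $d_G(x,y)$.
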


\begin{proof}
We use the standing assumption $k_\alpha>0$ for all roots. Choose a real radial $\psi\in C_c^\infty(B(0,2))$ with $\psi=1$ on $B(0,1)$, and put $\eta=c_k\mathcal{F}_k^{-1}\psi$. Then $\eta\in\mathcal{S}$ is radial and $\eta*_kf=f$ for the functions in the statement. The kernel of $Tg=\chi_S(\eta*_kg)$ is
\[
K(x,y)=\chi_S(x)\tau_x\eta(y).
\]
The boundedness of radial translations gives
\[
\sup_x\int|K(x,y)|\,\mathrm{d}\mu_k(y)\le\|\eta\|_{L_k^1}.
\]
For the second Schur bound, Lemma~\ref{lem:translation-estimate} and \eqref{EWBOUND} imply
\begin{equation}\label{E4}
|K(x,y)|w(x)\le C_M\chi_S(x)(1+d_G(x,y))^{-M}.
\end{equation}
Partition $\mathbb{R}^d$ into half-open unit cubes $Q_j=j+[0,1)^d$. For $M>d$, comparison within each unit cube and summation over $G$ give
\[
\sup_y\sum_{j\in\mathbb{Z}^d}\sup_{x\in Q_j}(1+d_G(x,y))^{-M}<\infty.
\]
Indeed, the orbit-distance term is at most the sum of $|G|$ translates of $(1+|x|)^{-M}$, whose suprema over the unit cubes have a uniformly finite sum. Therefore \eqref{E5} yields
\[
\sup_y\int|K(x,y)|\,\mathrm{d}\mu_k(x)\le C\delta.
\]
Schur's test yields $\|T\|_{L_k^2\to L_k^2}\le C\sqrt\delta$. Consequently
\[
\|f\|_{L_k^2(S)}\le C\sqrt\delta\,\|f\|_{L_k^2},
\]
which gives an explicit rate for the vanishing constant in the statement.
\end{proof}
\begin{remark}
% [R8-LANG-042] Improved precision, flow, or concision in the proof exposition.
We apply this estimate to a neighborhood $E_\delta=\{x\in\mathbb{R}^d:|\langle\alpha,x\rangle|\le\delta\}$ of a fixed root hyperplane. The next proposition specifies how small $\delta$ must be relative to the bandwidth $N$.
\end{remark}

\begin{proposition}
\label{prop:hyperplane-mass}
Let $\alpha \in R_+$ be a fixed root and $\delta>0$. Define the tubular set
\[
E_\delta := \bigl\{ x \in \mathbb{R}^d : |\langle \alpha, x \rangle| \le \delta \bigr\}.
\]
Then there exists a constant $C = C(d, R_+, k) > 0$ such that for every $f \in L_k^2(\mathbb{R}^d)$ with $\supp(\mathcal{F}_k f) \subset \overline{B(0,N)}$,
\begin{equation}\label{E8}
\| f \|_{L_k^2(E_\delta)} \le \frac{1}{10|R_+|} \, \| f \|_{L_k^2(\mathbb{R}^d)},
\end{equation}
whenever $\delta \le C/N$.
\end{proposition}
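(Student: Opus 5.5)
The plan is to reduce to bandwidth $1$ by dilation and then apply Lemma~\ref{lem:small-mass} to the slab $E_{\delta'}$ with $\delta'=N\delta$.

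\emph{Step 1: rescaling.} Given $f$ with $\supp\mathcal{F}_k f\subset\overline{B(0,N)}$, set $g(x)=f(x/N)$. By property (ii) of the Dunkl kernel, $E_k(-i\xi,x/N)=E_k(-i\xi/N,x)$. Changing variables $x\mapsto Nx$ in the integral defining $\mathcal{F}_k$ and using the homogeneity $w(Nx)=N^{2\gamma_k}w(x)$, we get
\[
\mathcal{F}_k g(\xi)=N^{d_k}\mathcal{F}_k f(N\xi).
\]
This holds first for Schwartz functions and then for all $f\in L_k^2$ by density and Plancherel. Hence $\supp\mathcal{F}_k g\subset\overline{B(0,1)}$.

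Again by homogeneity of $\mu_k$,
\[
\|g\|_{L_k^2(A)}^2=N^{d_k}\|f\|_{L_k^2(N^{-1}A)}^2\qquad\text{for every measurable }A.
\]
Since $N^{-1}E_{N\delta}=E_\delta$, this gives
\[
\frac{\|f\|_{L_k^2(E_\delta)}}{\|f\|_{L_k^2}}=\frac{\|g\|_{L_k^2(E_{N\delta})}}{\|g\|_{L_k^2}}.
\]
So it suffices to prove the proposition for $N=1$, with the slab $E_{\delta'}$ where $\delta'=N\delta$.

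\emph{Step 2: Lebesgue density of the slab.} Let $Q$ be any unit cube. The Lebesgue measure of $\{x\in Q:|\langle\alpha,x\rangle|\le\delta'\}$ is bounded by $C_d\delta'/|\alpha|$. To see this, rotate so that $\alpha/|\alpha|$ is a coordinate direction. The cube $Q$ lies in a box of side $\sqrt d$, and the slab has width $2\delta'/|\alpha|$ in the $\alpha$ direction, so the slab meets the box in measure at most $2\delta'|\alpha|^{-1}d^{(d-1)/2}$. Alternatively, one can use Lemma~\ref{lem:carbery-wright} together with Lemma~\ref{lem:lower-bound}, as in \eqref{E6}. Either way, $S=E_{\delta'}$ satisfies \eqref{E5} with density parameter $\delta_0=C(d,R_+)\,\delta'$.

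\emph{Step 3: conclusion.} Lemma~\ref{lem:small-mass} gives
\[
\|g\|_{L_k^2(E_{\delta'})}\le C(d,R_+,k,\delta_0)\|g\|_{L_k^2}.
\]
The constant tends to $0$ as $\delta_0\to0$; its proof in fact shows it is $\le C_1\sqrt{\delta_0}$. Choose $c_0=c_0(d,R_+,k)$ so that $C_1\sqrt{C(d,R_+)c_0}\le 1/(10|R_+|)$. Then whenever $N\delta\le c_0$, that is $\delta\le c_0/N$, Step 1 yields \eqref{E8} with $C=c_0$.

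\emph{Main obstacle.} I expect Step 1 to be the main obstacle, namely justifying the dilation identity for $\mathcal{F}_k$ on all of $L_k^2$ so that the supports are correctly related. This follows from the scaling property of $E_k$ plus density, and the rest is bookkeeping. One should also check that the constant in Lemma~\ref{lem:small-mass} is uniform in $S$, depending only on $\delta_0$. This is true because the Schur bounds there depend only on the density parameter in \eqref{E5}.
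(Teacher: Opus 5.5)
Your proposal is correct and follows essentially the same route as the paper. You verify the small-density condition \eqref{E5} for the slab, invoke Lemma~\ref{lem:small-mass} at bandwidth $1$ (whose proof gives the $C\sqrt{\delta_0}$ rate), and transfer to bandwidth $N$ via $g(x)=f(x/N)$ and the homogeneity of $\mu_k$. The differences are minor: you rescale first rather than last, you bound the slab's Lebesgue measure by direct geometry (the paper uses Lemma~\ref{lem:carbery-wright} with Lemma~\ref{lem:lower-bound}), and you make explicit the identity $\mathcal{F}_k g(\xi)=N^{d_k}\mathcal{F}_k f(N\xi)$, which the paper uses without writing it out.
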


\begin{proof}
We apply Lemma~\ref{lem:small-mass} with $S = E_\delta$. To this end, we first estimate the Lebesgue measure of $E_\delta \cap Q$ for an arbitrary unit cube $Q$. Using Lemma~\ref{lem:carbery-wright} (or Remark~\ref{R29}) in the same manner as in \eqref{E6}, we obtain
\[
|E_\delta \cap Q|
= \bigl| \bigl\{ x \in Q : |\langle \alpha, x \rangle| \le \delta \bigr\} \bigr|
\le \frac{C_d}{C_0} \, \delta \, |Q|,
\]
where $C_0>0$ is independent of $Q$ and $\alpha$ satisfying $\sup_{x\in Q}|\langle x,\alpha\rangle|\geq C_0$.
Thus, for $\delta $ as in Lemma~\ref{lem:small-mass}, the small-measure condition holds with $\delta _0 := \frac{C_d}{C_0}\delta$.

By Lemma~\ref{lem:small-mass}, there exists a constant $c_0 = c_0(d, R_+, k) > 0$ such that whenever $\delta _0 \le c_0$ (i.e. $\delta \le c_0 C_0/C_d$), the estimate
\begin{equation}\label{E7}
\| f \|_{L_k^2(E_\delta)} \le \frac{1}{10|R_+|} \, \| f \|_{L_k^2(\mathbb{R}^d)}
\end{equation}
holds for every $f \in L_k^2(\mathbb{R}^d)$ with $\supp(\mathcal{F}_k f) \subset \overline{B(0,1)}$.

Now let $f \in L_k^2(\mathbb{R}^d)$ with $\supp(\mathcal{F}_k f) \subset \overline{B(0,N)}$. Define $g(x) := f(x/N)$. Then $\supp(\mathcal{F}_k g) \subset \overline{B(0,1)}$. If $\delta_1 \le c_0 C_0/C_d$, applying \eqref{E7} to $g$ yields
\[
\| g \|_{L_k^2(E_{\delta_1})} \le \frac{1}{10|R_+|} \, \| g \|_{L_k^2(\mathbb{R}^d)}.
\]
Since the Dunkl measure scales as $\mathrm{d}\mu_k(x) = w(x)\,\mathrm{d}x$ with $w(t x) = t^{2\gamma_k} w(x)$ (where $\gamma_k = \sum_{\alpha\in R_+} k_\alpha$), we have
\[
\| f \|_{L_k^2(E_{\delta_1/N})}
% [R8-NOTATION-G008] Unified multiplicity/derivative/space/index notation or corrected a typographical inconsistency.
= N^{-d_k/2} \| g \|_{L_k^2(E_{\delta_1})}
% [R8-NOTATION-G009] Unified multiplicity/derivative/space/index notation or corrected a typographical inconsistency.
\le \frac{1}{10|R_+|} \, N^{-d_k/2} \| g \|_{L_k^2(\mathbb{R}^d)}
= \frac{1}{10|R_+|} \, \| f \|_{L_k^2(\mathbb{R}^d)}.
\]
Taking $\delta := \delta_1/N \le (c_0 C_0/C_d)/N$ gives the desired inequality with $C := c_0 C_0/C_d$.
\end{proof}

\begin{remark}\label{R1}
The same estimate applies to all other hyperplanes. Combining these results yields
\begin{equation}\label{ER1SM}
H_\delta=\bigcup_{\alpha\in R_+}\bigl\{x\,\big|\,|\langle x,\alpha\rangle|\leq \delta\bigr\},\quad
\|f\|_{L_k^2(H_\delta)}\leq \frac{1}{10}\|f\|_{L_k^2(\mathbb{R}^d)}.
\end{equation}
% [R8-LANG-043] Improved precision, flow, or concision in the proof exposition.
Thus the union of the hyperplane neighborhoods carries at most one tenth of the weighted norm when $\delta\le c_*/N$ for a sufficiently small structural constant $c_*>0$.
\end{remark}

Recall the operators $T_j$ and $\mathcal{R}_j$ defined in \eqref{ETJ} and \eqref{ERH2}. We first show that the reflection correction preserves Dunkl bandwidth.

\begin{lemma}\label{LRSS}
Let $N>0$ and $f\in\mathcal{S}(\mathbb{R}^d)$ satisfy $\supp(\mathcal{F}_k f)\subset\overline{B(0,N)}$. Then, for every $j=1,\dots,d$,
\[
\supp(\mathcal{F}_k(\mathcal{R}_j f))\subset\overline{B(0,N)}.
\]
\end{lemma}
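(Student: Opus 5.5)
The plan is to apply the Paley--Wiener characterization of Lemma~\ref{PWSDK} in both directions: first to convert the support hypothesis on $\mathcal{F}_k f$ into growth bounds on an entire extension of $f$, and then to verify the same kind of bounds for $\mathcal{R}_j f$. Since $\mathcal{R}_j f=T_jf-\partial_jf$ and both $T_j$ and $\partial_j$ map $\mathcal{S}(\mathbb{R}^d)$ into itself, $\mathcal{R}_jf\in\mathcal{S}(\mathbb{R}^d)$, so Lemma~\ref{PWSDK} applies to it. By \eqref{ERH2} it suffices to treat a single root. Fix $\alpha\in R$; the normalization gives $|\alpha|^2=2$, hence $\sigma_\alpha z=z-\langle\alpha,z\rangle\alpha$, and we study
\[
D_\alpha f(z)=\frac{f(z)-f(\sigma_\alpha z)}{\langle\alpha,z\rangle},\qquad z\in\mathbb{C}^d,
\]
with $\sigma_\alpha$ and $\langle\cdot,\cdot\rangle$ extended complex-bilinearly. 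By Lemma~\ref{PWSDK}, $f$ extends to an entire function with $|f(z)|\le C_m(1+|z|)^{-m}e^{N|\operatorname{Im}z|}$ for every $m$.

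\emph{Entireness.} I will use the representation
\[
D_\alpha f(z)=\int_0^1\bigl\langle\nabla f\bigl(z-s\langle\alpha,z\rangle\alpha\bigr),\alpha\bigr\rangle\,\mathrm{d}s,
\]
which holds wherever $\langle\alpha,z\rangle\neq0$. Its right-hand side is entire in $z$, so $D_\alpha f$ has only removable singularities on $\{\langle\alpha,z\rangle=0\}$ and is entire.

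\emph{Growth bounds.} Split $\mathbb{C}^d$ according to the size of $|\langle\alpha,z\rangle|$.
\begin{enumerate}
\item[(a)] Where $|\langle\alpha,z\rangle|\ge1$, use the quotient directly. Since $\sigma_\alpha$ is a real orthogonal map, $|\sigma_\alpha z|=|z|$ and $|\operatorname{Im}\sigma_\alpha z|=|\operatorname{Im}z|$. Hence
\[
|D_\alpha f(z)|\le|f(z)|+|f(\sigma_\alpha z)|\le 2C_m(1+|z|)^{-m}e^{N|\operatorname{Im}z|}.
\]
\item[(b)] Where $|\langle\alpha,z\rangle|<1$, use the integral formula. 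First, Cauchy's estimates on polydiscs of radius $1$ give
\[
|\nabla f(\zeta)|\le C\sup_{|\omega-\zeta|\le\sqrt d}|f(\omega)|\le C'_m(1+|\zeta|)^{-m}e^{N|\operatorname{Im}\zeta|+N\sqrt d}.
\]
Next, the points $\zeta=z-s\langle\alpha,z\rangle\alpha$ with $s\in[0,1]$ satisfy $|\zeta-z|\le\sqrt2$. Therefore $|\operatorname{Im}\zeta|\le|\operatorname{Im}z|+\sqrt2$ and $1+|\zeta|\ge(1+|z|)/(1+\sqrt2)$. Combining these,
\[
|D_\alpha f(z)|\le C''_m e^{N(\sqrt d+\sqrt2)}(1+|z|)^{-m}e^{N|\operatorname{Im}z|}.
\]
\end{enumerate}

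The factor $e^{N(\sqrt d+\sqrt2)}$ in (b) depends on $N$ but not on $z$, so it can be absorbed into the constant $C_m$. This absorption is the point to check most carefully: Lemma~\ref{PWSDK} only requires $z$-independent constants, so the exponential type $N$ is preserved.

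\emph{Conclusion.} Summing over $\alpha\in R$ with the coefficients $\frac{k_\alpha}{2}\langle\alpha,e_j\rangle$, $\mathcal{R}_jf$ is a Schwartz function whose entire extension satisfies the bounds of Lemma~\ref{PWSDK} with the same $N$. The converse direction of that lemma then gives $\supp\mathcal{F}_k(\mathcal{R}_jf)\subset\overline{B(0,N)}$.

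The main obstacle is the region near the complexified hyperplane $\{\langle\alpha,z\rangle=0\}$, where the difference quotient is singular in form. The integral representation together with Cauchy estimates is what I expect to handle it; the rest is bookkeeping.
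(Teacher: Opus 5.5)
Your argument is correct, but it takes a different route from the paper.

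\textbf{The paper's route.} The paper never examines the difference quotients. It writes $\mathcal{R}_j=T_j-\partial_j$ and treats the two pieces separately:
\begin{enumerate}
\item[(1)] For $\partial_j f$, it combines Lemma~\ref{PWSDK} with Cauchy's formula on circles of fixed radius. This shows $\partial_jf$ satisfies the Paley--Wiener bounds with the same type $N$. The factor $e^{Nr}$ is absorbed into the constants, exactly as you do.
\item[(2)] For $T_jf$, it works on the transform side, using the multiplier identity $\mathcal{F}_k(T_jf)(\xi)=i\xi_j\mathcal{F}_kf(\xi)$.
\end{enumerate}
Linearity then finishes the proof.

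\textbf{Your route.} You verify the Paley--Wiener bounds for $\mathcal{R}_jf$ directly, one root at a time. You split into two regions:
\begin{enumerate}
\item[(1)] $|\langle\alpha,z\rangle|\ge1$, handled by the reflection invariance of $|z|$ and $|\operatorname{Im}z|$;
\item[(2)] $|\langle\alpha,z\rangle|<1$, handled by the integral representation together with Cauchy estimates.
\end{enumerate}

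\textbf{Comparison.} The paper's argument is shorter. The multiplier identity makes the Dunkl part immediate, so only the easy estimate for $\partial_j$ is needed. The Schwartz hypothesis required for the converse Paley--Wiener step is then the trivial fact $\partial_jf\in\mathcal{S}(\mathbb{R}^d)$. Your argument is more hands-on but proves a finer statement: each individual operator $f\mapsto (f-f\circ\sigma_\alpha)/\langle\alpha,\cdot\rangle$ preserves Dunkl bandwidth, with no appeal to the multiplier identity for $T_j$.

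\textbf{A simplification.} You do not actually need to quote that $T_j$ maps $\mathcal{S}(\mathbb{R}^d)$ into itself, although that fact is true. An entire function satisfying the bounds of Lemma~\ref{PWSDK} for every $m$ is automatically Schwartz on $\mathbb{R}^d$, by Cauchy estimates on unit polydiscs centred at real points. So your growth bounds already supply the Schwartz hypothesis.
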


\begin{proof}
By Lemma~\ref{PWSDK}, the support condition implies that $f$ extends to an entire function satisfying

\begin{equation}\label{EEE123}
% [R8-MATH-021] Propagated the corrected Paley--Wiener growth estimate to Lemma 5.4.
|f(z)|\le C_m(1+|z|)^{-m} e^{N|\operatorname{Im} z|},\qquad z\in\mathbb{C}^d.
\end{equation}

% [R8-MATH-022] Added the local Cauchy-estimate justification; this does not assume the Bernstein inequality.
Cauchy's formula in the $j$th complex coordinate, on circles of a fixed radius, shows that $\partial_j f$ satisfies the same estimate, with new constants $C_m$. Indeed, a bounded complex displacement changes $1+|z|$ only by a fixed factor and increases $|\operatorname{Im}z|$ by at most that radius. Since $\partial_j f\in\mathcal{S}(\mathbb{R}^d)$, Lemma~\ref{PWSDK} gives $\supp(\mathcal{F}_k(\partial_j f))\subset\overline{B(0,N)}$.

Moreover, the multiplier identity $\mathcal{F}_k(T_j f)(\xi)=i\xi_j\mathcal{F}_k f(\xi)$ implies
\[
\supp(\mathcal{F}_k (T_jf))\subset \overline{B(0,N)}.
\]
The conclusion now follows from the linearity of the Dunkl transform and $\mathcal{R}_j=T_j-\partial_j$.
\end{proof}

For $i=1,\dots,d$, define the operators
\[
H_{1,i}=T_i,\qquad H_{2,i}=\mathcal{R}_i.
\]
For a word $\omega=((a_1,i_1),\dots,(a_n,i_n))$, where $a_r\in\{1,2\}$ and $i_r\in\{1,\dots,d\}$, set
\[
W_\omega=H_{a_1,i_1}\cdots H_{a_n,i_n}.
\]

We can now establish the Bernstein inequality.
\begin{lemma}\label{LBS2}
Let $N>0$ and $f\in\mathcal{S}(\mathbb{R}^d)$ satisfy $\supp(\mathcal{F}_k f)\subset\overline{B(0,N)}$. For every multi-index $\beta\in\mathbb{N}^d$ with $|\beta|=n$, we have
\begin{equation}\label{EBSS3}
\|\partial^\beta f\|_{L_k^2(\mathbb{R}^d)}\leq C'(d,R_+,k)(C(d,R_+,k)N)^n\|f\|_{L_k^2(\mathbb{R}^d)}.
\end{equation}
\end{lemma}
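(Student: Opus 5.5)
The plan is to write each ordinary derivative as a combination of the words $W_\omega$ and bound every letter separately on the band-limited class. Since $\partial_i=T_i-\mathcal{R}_i=H_{1,i}-H_{2,i}$, expanding the product gives
\[
\partial^\beta f=\sum_{\omega}\pm W_\omega f,
\]
a sum of $2^n$ words of length $n$. Lemma~\ref{LRSS} shows that $\mathcal{R}_i$ maps $PW_N:=\{f\in\mathcal{S}:\supp\mathcal{F}_kf\subset\overline{B(0,N)}\}$ into itself. The multiplier identity $\mathcal{F}_k(T_if)=i\xi_i\mathcal{F}_kf$ shows the same for $T_i$, and by Plancherel $\|T_if\|_{L_k^2}\le N\|f\|_{L_k^2}$. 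Because every intermediate function $H_{a_{r+1},i_{r+1}}\cdots H_{a_n,i_n}f$ stays in $PW_N$, the theorem reduces to a single one-letter estimate,
\[
\|\mathcal{R}_if\|_{L_k^2}\le A N\|f\|_{L_k^2},\qquad f\in PW_N,
\]
with $A=A(d,R_+,k)$. Iterating it yields $\|\partial^\beta f\|_{L_k^2}\le 2^n(\max(1,A)N)^n\|f\|_{L_k^2}$, which is \eqref{EBSS3} with $C=2\max(1,A)$ and $C'=1$. Any loss from a cruder argument can be absorbed into $C'$.

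The one-letter estimate I would prove by scaling. If $f\in PW_N$ and $g(x)=f(x/N)$, then $g\in PW_1$. Each difference quotient $(f(x)-f(\sigma_\alpha x))/\langle\alpha,x\rangle$ is homogeneous of degree $-1$, so $\mathcal{R}_ig(x)=N^{-1}(\mathcal{R}_if)(x/N)$. The weight scaling used in Proposition~\ref{prop:hyperplane-mass} then shows that $\|\mathcal{R}_i\|_{PW_N\to L_k^2}=N\|\mathcal{R}_i\|_{PW_1\to L_k^2}$. So it suffices to prove boundedness on $PW_1$.

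Fix $\alpha$ and split $\mathbb{R}^d$ into the strip $E_\delta=\{|\langle\alpha,x\rangle|\le\delta\}$ with a fixed $\delta$ (say $\delta=1$) and its complement. Off the strip,
\[
|\langle\alpha,x\rangle|^{-1}\,|f(x)-f(\sigma_\alpha x)|\le\delta^{-1}\bigl(|f(x)|+|f(\sigma_\alpha x)|\bigr).
\]
Since $w$ is $G$-invariant, $\|f\circ\sigma_\alpha\|_{L_k^2}=\|f\|_{L_k^2}$, which gives the bound $2\delta^{-1}\|f\|_{L_k^2}$.

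On the strip I would use the mean value formula
\[
\frac{f(x)-f(\sigma_\alpha x)}{\langle\alpha,x\rangle}=\frac{2}{|\alpha|^2}\int_0^1\langle\alpha,\nabla f\rangle\Bigl(x-s\tfrac{2\langle\alpha,x\rangle}{|\alpha|^2}\alpha\Bigr)\,\mathrm{d}s .
\]
This reduces matters to controlling $\nabla f$ on a bounded neighbourhood of the segment $[x,\sigma_\alpha x]$, whose length is at most $2\delta/|\alpha|$. To avoid circularity (the gradient bound is what we are proving), I would not use a Bernstein estimate here. Instead I would use the reproducing formula $f=\eta*_kf$, with $\eta=c_k\mathcal{F}_k^{-1}\psi$ and $\psi=1$ on $B(0,1)$, as in Lemma~\ref{lem:small-mass}. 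Then $\mathcal{R}_if(x)=\int f(y)\,\mathcal{R}_i^{(x)}[\tau_x\eta(y)]\,\mathrm{d}\mu_k(y)$, and $\mathcal{R}_i$ becomes an integral operator with kernel $K_i(x,y)=\mathcal{R}_i^{(x)}\tau_x\eta(y)$. The goal is to verify the two Schur bounds
\[
\sup_x\int|K_i|\,\mathrm{d}\mu_k(y)<\infty,\qquad \sup_y\int|K_i|\,\mathrm{d}\mu_k(x)<\infty .
\]
Off the strip these follow from Lemma~\ref{lem:translation-estimate} and \eqref{EWBOUND}, exactly as in the proof of Lemma~\ref{lem:small-mass}. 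On the strip the difference quotient of $x\mapsto\tau_x\eta(y)$ is bounded by the sup of $|\nabla_x\tau_x\eta(y)|$ over the segment.

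The main obstacle is therefore a gradient analogue of Lemma~\ref{LPY}: a bound for $|\nabla_x\tau_x\eta(y)|$ of the form $C\,\mu_k(B(x,1))^{-1}(1+d_G(x,y))^{-M}$ at scale $t=1$. I would try first to obtain it from the explicit representation
\[
\tau_x\eta(y)=c_k\int E_k(iy,\xi)\,E_k(-ix,\xi)\,\mathcal{F}_k\eta(\xi)\,\mathrm{d}\mu_k(\xi),
\]
differentiating under the integral in $x$. The available kernel bounds for $\partial_xE_k(ix,\xi)$ give $|\xi|$-polynomial growth, which is harmless because $\mathcal{F}_k\eta$ is smooth and compactly supported. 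The decay in $d_G$ would then come from integrating by parts with Dunkl operators in $\xi$, as in the proof of the smoothness/decay estimates in \cite{JDA3}. Alternatively, one can cite the regularity version of \cite[Theorem~4.1]{JDA3} directly, which bounds translates of Schwartz functions together with their derivatives.

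Two further points need care. Moving $x$ along a segment of bounded length changes $\mu_k(B(x,1))$ and $d_G(x,y)$ only by bounded factors, by \eqref{EWBOUND} and the triangle inequality. And the mean value formula is valid because $f\in PW_1$ is entire by Lemma~\ref{PWSDK}.
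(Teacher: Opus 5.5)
Your reduction matches the paper's. You expand $\partial^\beta f$ into the $2^n$ words $W_\omega$, keep every intermediate function band-limited via Lemma~\ref{LRSS}, use $\|T_ig\|_{L_k^2}\le N\|g\|_{L_k^2}$, and reduce to the one-letter bound $\|\mathcal{R}_ig\|_{L_k^2}\le AN\|g\|_{L_k^2}$. You prove that bound by a different route. Your scaling identity $\mathcal{R}_ig(x)=N^{-1}(\mathcal{R}_if)(x/N)$, the segment comparisons and the Schur bounds all check out.

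\textbf{The paper's route.} The paper never estimates $\mathcal{R}_ig$ on the strip. Since $\mathcal{R}_ig$ is itself band-limited, Remark~\ref{R1} with $\delta=c_*/N$, applied to $\mathcal{R}_ig$, gives $\|\mathcal{R}_ig\|_{L_k^2}\le\frac{10}{9}\|\mathcal{R}_ig\|_{L_k^2(\mathbb{R}^d\setminus H_\delta)}$. Outside $H_\delta$ every $|\langle\alpha,x\rangle|$ exceeds $\delta$ at once, so the trivial bound $\delta^{-1}(|g(x)|+|g(\sigma_\alpha x)|)$ and the $G$-invariance of $\mu_k$ give $CN\|g\|_{L_k^2}$. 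This uses only the zeroth-order kernel bound already built into Lemma~\ref{lem:small-mass}.

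\textbf{Your route.} You rescale to $N=1$, use a fixed strip and the mean value formula, and apply a Schur test to the reproducing kernel. This avoids the hyperplane-mass estimate and bounds each difference operator on $PW_1$ directly. The price is the first-order estimate $|\nabla_x\tau_x\eta(y)|\le C\mu_k(B(x,1))^{-1}(1+d_G(x,y))^{-M}$, which the paper does not contain. The estimate is true, and with it your argument closes. However, integrating by parts in $\xi$ will not produce it: $E_k(iy,\xi)E_k(-ix,\xi)$ is not an eigenfunction of the Dunkl operators in $\xi$ (there is no Leibniz rule), so no decaying factor in $x-y$ appears.

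\textbf{How to get the gradient estimate.} Use R\"osler's formula from Section~\ref{subsec:translation}: $\tau_x\eta(y)=\int\widetilde\eta(A)\,\mathrm{d}\nu_y(\zeta)$ with $A^2=|x|^2+|y|^2-2\langle\zeta,x\rangle$, so $x$ enters only through the integrand. Since $|\zeta|\le|y|$ on $\operatorname{conv}(Gy)$, we have $A^2=|x-\zeta|^2+|y|^2-|\zeta|^2\ge|x-\zeta|^2$, hence $|\nabla_x\widetilde\eta(A)|\le|\widetilde\eta'(A)|$. Therefore $|\nabla_x\tau_x\eta(y)|\le\tau_x\Phi(y)$ for any radial Schwartz $\Phi$ with $\widetilde\Phi\ge|\widetilde\eta'|$. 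Lemma~\ref{lem:translation-estimate} applied to $\Phi$ then gives exactly the bound your Schur test needs.
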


\begin{proof}
By Lemma~\ref{LRSS}, we have $\supp(\mathcal{F}_k(W_\omega f))\subset \overline{B(0,N)}$.
\begin{align*}
\partial^{\beta} f
&= \partial_1^{\beta_1}\partial_2^{\beta_2}\cdots\partial_d^{\beta_d} f \\
% [R8-NOTATION-G010] Unified multiplicity/derivative/space/index notation or corrected a typographical inconsistency.
&= \underbrace{\partial_1\cdot \partial_1\cdots \partial_1}_{\beta_1}
% [R8-NOTATION-G011] Unified multiplicity/derivative/space/index notation or corrected a typographical inconsistency.
\underbrace{\partial_2\cdots \partial_2}_{\beta_2}
\cdots
% [R8-NOTATION-G012] Unified multiplicity/derivative/space/index notation or corrected a typographical inconsistency.
\underbrace{\partial_d\cdots \partial_d}_{\beta_d} f \\
&= (T_1-\mathcal{R}_1)^{\beta_1}(T_2-\mathcal{R}_2)^{\beta_2}\cdots (T_d-\mathcal{R}_d)^{\beta_d} f \\
&= \sum_{\omega} C_{\omega}\, W_\omega f,
\qquad \text{where } \sum_{\omega} |C_{\omega}|\leq 2^n.
\end{align*}

Hence
\begin{equation}\label{EBSS2}
\|\partial^\beta f\|_{L_k^2(\mathbb{R}^d)}\leq \sum_{\omega}|C_{\omega}|\,\|W_\omega f\|_{L_k^2(\mathbb{R}^d)}.
\end{equation}

% [R8-MATH-023] Retained the small structural factor required by the hyperplane-mass estimate.
Fix a sufficiently small structural constant $c_*>0$ as in Proposition~\ref{prop:hyperplane-mass} and set $\delta=c_*/N$. By Remark~\ref{R1}, every function $f$ with $\supp\mathcal{F}_k f\subset\overline{B(0,N)}$ satisfies
\[
\|f\|_{L_k^2(\mathbb{R}^d\setminus H_\delta)}^2 \ge \frac{99}{100}\|f\|_{L_k^2(\mathbb{R}^d)}^2.
\]

For a nonempty word $\omega$,
\[
\|W_\omega f\|_{L_k^2(\mathbb{R}^d)} = \|H_{a_1 ,i_1}\big(W_{\omega'} f\big)\|_{L_k^2(\mathbb{R}^d)},
\]
where $W_{\omega'} =H_{a_2,i_2}\cdots H_{a_n,i_n}$.

If $a_1=1$, then
\[
\|T_{i_1}\big(W_{\omega'} f\big)\|_{L_k^2(\mathbb{R}^d)} \le N \|W_{\omega'} f\|_{L_k^2(\mathbb{R}^d)}.
\]

% [R8-MATH-024] Replaced the unjustified choice delta=1/N by delta=c_*/N and applied the estimate to the correct function.
If $a_1=2$, apply \eqref{ER1SM} with $\delta=c_*/N$ to the band-limited function $\mathcal{R}_{i_1}W_{\omega'}f$ to obtain
\begin{align*}
\|\mathcal{R}_{i_1}\big(W_{\omega'} f\big)\|_{L_k^2(\mathbb{R}^d)}
&\le \frac{10}{9}\|\mathcal{R}_{i_1} W_{\omega'}  f\|_{L_k^2(\mathbb{R}^d\setminus H_\delta)} \\
&\le \frac{10}{9}\cdot C(d,R_+,k)\cdot N \|W_{\omega'}  f\|_{L_k^2(\mathbb{R}^d)}.
\end{align*}
% [R8-MATH-025] Explained the reflection estimate and the absorption of the structural layer-width constant.
Here the factor $c_*^{-1}$ is absorbed into $C(d,R_+,k)$; the reflection terms are bounded using the $G$-invariance of $\mu_k$ and $|\langle\alpha,x\rangle|>\delta$ outside $H_\delta$. Induction on the word length therefore gives
\begin{equation}\label{EBSS1}
\|W_\omega f\|_{L_k^2(\mathbb{R}^d)}\leq (C(d,R_+,k)N)^n\|f\|_{L_k^2(\mathbb{R}^d)}.
\end{equation}
Substituting \eqref{EBSS1} into \eqref{EBSS2}, we obtain
\[
\begin{aligned}
\|\partial^\beta f\|_{L_k^2(\mathbb{R}^d)}&\leq \sum_{\omega}|C_{\omega}|\,(C(d,R_+,k)N)^n\|f\|_{L_k^2(\mathbb{R}^d)}\\
&\leq C'(d,R_+,k)(C(d,R_+,k)N)^n\|f\|_{L_k^2(\mathbb{R}^d)}.
\end{aligned}
\]
\end{proof}

\begin{remark}
% [R8-MATH-026] Specified a bandwidth-preserving approximation and identification of the ordinary derivatives.
For a band-limited $f\in L_k^2(\mathbb{R}^d)$, choose $\psi_\ell\in C_c^\infty(B(0,N))$ converging to $\mathcal{F}_k f$ in $L_k^2$ and set $f_\ell=\mathcal{F}_k^{-1}\psi_\ell$. Plancherel's identity gives $f_\ell\to f$ in $L_k^2$. On the fixed compact frequency ball, weighted $L_k^2$ convergence implies weighted $L_k^1$ convergence. The inverse-transform formula and its ordinary derivatives therefore converge locally uniformly, because the kernel derivatives are bounded on compact sets. Applying Lemma~\ref{LBS2} to $f_\ell-f_m$ identifies the weighted $L_k^2$ limits of the derivatives and yields Theorem~\ref{TBNST}.
\end{remark}

% [R8-LANG-027] Clarified the geometric obstruction without claiming the classical argument is invalid.
We now turn to sufficiency. Removing neighborhoods of the root hyperplanes may leave small fragments of the cubes $Q_j$. The fragment containing a maximizing point need not occupy a fixed fraction of its cube, so a direct application of the classical cube argument is insufficient. We therefore enlarge these fragments to convex sets $S_j$ containing a complete unit cube.

\begin{lemma}
\label{thm:suff}
Let $E \subset \mathbb{R}^d$ be a $(\gamma,L)$-relatively dense set with respect to $\mu_k$. Then for every $f \in L_k^2(\mathbb{R}^d)$ with $\supp(\mathcal{F}_k f) \subset \overline{B(0,N)}$, there exists a constant $C = C(\gamma, L, d, R_+, k) > 0$ such that
\begin{equation}
\label{eq:LS}
\| f \|_{L_k^2(\mathbb{R}^d)}^2 \leq e^{C(1+N)} \| f \|_{L_k^2(E)}^2.
\end{equation}
\end{lemma}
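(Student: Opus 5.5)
Rather than rescaling $f$ to unit bandwidth, we keep the thickness scale $L$ fixed and let the bandwidth $N$ vary. Throughout we assume $N\ge1$; smaller $N$ is absorbed into the constant by monotonicity of the support condition. The argument is a Kovrijkine-type good/bad cube decomposition driven by the Bernstein inequality (Theorem~\ref{TBNST}) together with the hyperplane-mass estimate (Remark~\ref{R1}). The weighted setting is handled by the Euclidean geometry of Theorem~\ref{TEV} and by working only away from $H_\delta$ with $\delta=c_*/N$.

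\emph{Step 1 (good cubes).} We tile $\mathbb{R}^d$ by cubes $Q_j=x_j+LQ$. Let $A\ge1$ be a large constant to be fixed. We call $Q_j$ \emph{good} if
\[
\sum_{|\beta|=n}\|\partial^\beta f\|_{L_k^2(Q_j)}^2\le A^{n}(CN)^{2n}\|f\|_{L_k^2(Q_j)}^2\quad\text{for all }n\ge1.
\]
Summing Theorem~\ref{TBNST} over $n$ and $j$, and counting multi-indices by $d^n$, gives the following. With $A$ large, the bad cubes satisfy
\[
\sum_{\text{bad}}\|f\|_{L_k^2(Q_j)}^2\le\tfrac14\|f\|_{L_k^2}^2 .
\]
Combining this with Remark~\ref{R1}, the set $G'=\bigcup_{\text{good}}Q_j\setminus H_\delta$ carries at least half of $\|f\|_{L_k^2}^2$.

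\emph{Step 2 (local analytic control).} Fix a good cube $Q_j$ with $\|f\|_{L_k^2(Q_j\setminus H_\delta)}$ not negligible. On $Q_j\setminus H_\delta$ the weight is comparable within balls of radius $\sim\delta$ up to fixed factors, but it is \emph{not} comparable across the whole cube. The weighted Bernstein bounds therefore have to be converted into pointwise bounds.

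We first use Sobolev embedding on a ball $B(y,\delta/2)\subset\mathbb{R}^d\setminus H_{\delta/2}$, rescaled to unit size. This yields a point $y_0\in Q_j\setminus H_\delta$ with
\[
|\partial^\beta f(y_0)|\le (C'N)^{|\beta|}\,|f(y_0)|\,e^{CN}
\]
for all $\beta$. The $e^{CN}$ loss here comes from comparing weights over distances $\sim\delta$ versus $L$; it is harmless because it sits in the exponent only linearly. Concretely, $\prod(\delta+L)^{2k_\alpha}/\prod\delta^{2k_\alpha}\lesssim N^{2\gamma_k}\le e^{CN}$, and the Sobolev constants contribute $\delta^{-d/2}$, again polynomial in $N$.

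By the Taylor series, $f$ then extends holomorphically to a polydisc of radius $\sim 5\sqrt d L$ around $y_0$ with
\[
M\le e^{CN}|f(y_0)|.
\]
Here Paley--Wiener (Lemma~\ref{PWSDK}) guarantees that $f$ is entire, and the Taylor coefficients are controlled by $(C'N)^n/n!$.

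Next we restrict to segments through $y_0$ in a set $S_j\supset Q_j$ that is convex of diameter $\lesssim L$; this is the enlargement mentioned before the lemma. We apply Lemma~\ref{L1} on each segment and integrate in polar coordinates, as in \cite{OK}. This gives
\[
\sup_{S_j}|f|\le\Bigl(\frac{C}{|E\cap Q_j|}\Bigr)^{C N}\sup_{E\cap Q_j}|f|.
\]
Kovrijkine's $L^2$ version replaces the sup on $E\cap Q_j$ by an $L^2$ average over a subset of $E\cap Q_j$ of measure $\ge|E\cap Q_j|/2$. The Lebesgue density $|E\cap Q_j|\ge\widetilde\gamma L^d$ holds by Theorem~\ref{TEV}.

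\emph{Step 3 (returning to $\mu_k$).} To pass back to weighted norms, we keep from $E\cap Q_j$ only its part outside $H_{\delta'}$, with $\delta'=c\widetilde\gamma$ (independent of $N$), via Remark~\ref{R29}. On this part $w\ge c>0$, while $w\lesssim(|x_j|+L)^{2\gamma_k}$ on $Q_j$. The ratio $\sup_{Q_j}w/\inf_{(E\cap Q_j)\setminus H_{\delta'}}w$ is not uniformly bounded in $x_j$. We handle this by applying the whole argument in cubes adapted to the scale: far from the hyperplanes, $w$ is nearly constant on $Q_j$ by \eqref{EWBOUND}, so the ratio is bounded there. This yields
\[
\|f\|_{L_k^2(Q_j\setminus H_\delta)}^2\le e^{C(1+N)}\|f\|_{L_k^2(E\cap Q_j)}^2 .
\]
Summing over good $j$ and using Step 1 gives \eqref{eq:LS}.

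\emph{Main obstacle.} The hardest point is Step 2, namely obtaining the analytic bound $M\le e^{CN}|f(y_0)|$ with linear rather than quadratic dependence on $N$. The difficulty is that weights degenerate near $H_\delta$ with $\delta\sim1/N$. My plan is to ensure that every weight-comparison loss is only polynomial in $N$ (a power of $\delta^{-1}$), hence $\le e^{CN}$, and to take the ball in Sobolev embedding to be of radius $\delta$ rather than $1$.
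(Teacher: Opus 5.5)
Your route is genuinely different from the paper's, and with the corrections below it works.

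\emph{The paper's route.} The paper rescales to thickness scale $1/d$ and fixes $\delta=c_*/N$. It replaces the cubes by convex sets $S_j\subset C_\delta$ and their orbits $S_j^G$; these are fragments of cubes off $H_\delta$, enlarged along a fixed lattice vector until they contain a full unit cube. Goodness is defined through the Dunkl families $T^\beta$ and $W_\omega$, and Sobolev embedding is applied on all of $S_j^G$. The base point is a maximizer of $|f|$ on $S_j^1$, and the density of $E$ must then be transferred to $E\cap S_j^1$. What this buys is that the whole local set avoids $H_\delta$, so $w$ is comparable on it up to $(1+N)^{2\gamma_k}$. The price is the geometry of the $S_j$: bounded overlap, uniform Sobolev constants on varying convex sets, and lower bounds for $|E\cap S_j^1|$ and $|E'|$. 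These are the delicate points of that proof.

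\emph{Your route.} You keep the plain tiling by cubes of side $L$ and classify cubes with ordinary derivatives via Theorem~\ref{TBNST}, so the bad-cube sum is immediate. Only the base point $y_0$ has to lie off $H_\delta$. The Taylor series of the entire function $f$ at $y_0$ then bounds $M$ on a whole complex neighbourhood, regardless of the hyperplanes. The observed set is trimmed only by a layer $H_{\delta'}$ of fixed width. Thickness is thus used on full cubes, and no enlargement is needed: take $S_j=Q_j$ and let the segment from $y_0$ leave $Q_j$ if it does. Lemma~\ref{L1} then bounds $|f(y_0)|$ rather than $\sup_{S_j}|f|$, which is all you use.

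\emph{The error in Step 3.} The obstruction you describe does not exist. For $x\in Q_j\setminus H_{\delta'}$ and $z\in Q_j$ one has $|\langle z,\alpha\rangle|\le|\langle x,\alpha\rangle|\,(1+|\alpha|\sqrt d\,L/\delta')$. Hence $\sup_{Q_j}w\le\prod_{\alpha\in R_+}(1+|\alpha|\sqrt d\,L/\delta')^{2k_\alpha}\inf_{Q_j\setminus H_{\delta'}}w$ uniformly in $j$, exactly as in \eqref{E1}. Comparing the crude bounds $w\ge(\delta')^{2\gamma_k}$ and $w\lesssim(|x_j|+L)^{2\gamma_k}$ throws this away. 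So drop the ``cubes adapted to the scale'': they are undefined, and they would conflict with the fixed scale $L$ of both the tiling and the thickness hypothesis. Instead use $\mu_k(Q_j)\le C\,|E''|\inf_{E''}w$ with $E''=(E\cap Q_j)\setminus H_{\delta'}$. This converts the bound $\sup_{E_M}|f|^2\le\frac{2}{|E''|}\int_{E''}|f|^2\,\mathrm{d}x$, coming from Kovrijkine's Markov subset $E_M\subset E''$ with $|E_M|\ge|E''|/2$, into $\|f\|^2_{L_k^2(E\cap Q_j)}$.

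\emph{Making Step 2 precise.} Goodness controls derivatives relative to $\|f\|_{L_k^2(Q_j)}$, while $y_0$ lives off $H_\delta$.
\begin{enumerate}
\item[(1)] Keep only good cubes with $\|f\|^2_{L_k^2(Q_j\setminus H_\delta)}\ge\frac18\|f\|^2_{L_k^2(Q_j)}$; the others carry at most $\frac18\|f\|^2_{L_k^2}$ in total.
\item[(2)] Choose $y_0\in Q_j\setminus H_\delta$ with $\mu_k(Q_j\setminus H_\delta)\,|f(y_0)|^2\ge\|f\|^2_{L_k^2(Q_j\setminus H_\delta)}$.
\item[(3)] Then $\mu_k(Q_j)\le C(1+N)^{2\gamma_k}w(y_0)$ gives your Taylor bound with only a polynomial loss.
\item[(4)] Finally, $\|f\|^2_{L_k^2(Q_j\setminus H_\delta)}\le\mu_k(Q_j)|f(y_0)|^2$ closes the local estimate.
\end{enumerate}
Also run the Sobolev embedding on a cube of side $\sim\delta$ with vertex $y_0$, contained in $\overline{Q_j}\setminus H_{\delta/2}$, rather than on a ball. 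Then it does not leave the cube on which goodness is known, and its constant is uniform by scaling.
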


\begin{proof}
It suffices to establish the estimate for band-limited Schwartz functions and then pass to the limit by approximation in $L_k^2(\mathbb{R}^d)$. We therefore assume below that $f\in\mathcal{S}(\mathbb{R}^d)$.

By Theorem~\ref{TEV}, $E$ is also relatively dense with respect to the Lebesgue measure.
% [R8-MATH-028] Closed the small-bandwidth case and fixed the layer-width parameter used throughout this proof.
We first consider the case $L=1/d$ and divide the proof into five steps. It suffices to treat $N\ge1$: when $0<N<1$, the estimate for bandwidth $1$ gives the desired conclusion after enlarging the constant. For $N\ge1$, fix $\delta=c_*/N$, with $c_*>0$ sufficiently small for \eqref{ER1SM}.

\emph{Step 1. Construct a covering by enlarged sets $S_j$.}
 
The hyperplanes associated with the root system partition $\mathbb{R}^d$ into finitely many Weyl chambers. Let $C$ denote the open fundamental Weyl chamber and set $m=|R_+|$. Thus,
\[
C = \{x \in \mathbb{R}^d : \langle \alpha_i, x \rangle > 0,\ i = 1,\dots,m\},
\]
and define the $\delta$-separated region away from the root hyperplanes by
\[
C_\delta := C \setminus H_\delta = \{x \in \mathbb{R}^d : \langle \alpha_i, x \rangle > \delta,\ i = 1,\dots,m\},
\]
where
\[
H_\delta = \bigcup_{i=1}^m \bigl\{x \in \mathbb{R}^d : \big|\langle \alpha_i,x\rangle\big| \leq \delta\bigr\}.
\]

Let $Q_j = j + [0,1]^d$ with $j \in A \subset \mathbb{Z}^d$, and assume that $Q_j \cap C_\delta \neq \emptyset$ with the covering property
\[
C_\delta \subset \bigcup_{j\in A} Q_j.
\]

Suppose that part of $Q_j$ lies in $C_\delta$ after the $\delta$-neighborhood of the root hyperplanes is removed; that is,
\[
Q_j \cap C_\delta \neq \emptyset.
\]
Then there exists a point $t_0 \in [0,1]^d$ such that
\[
\langle \alpha_i, j + t_0 \rangle > \delta \quad \text{for all } 1\leq i\leq m.
\]

For each positive root $\alpha_i$, we define the extremal values of the linear functional $x \mapsto \langle \alpha_i, x \rangle$ over the unit cube $[0,1]^d$:
\[
M_i = \max_{t\in[0,1]^d} \langle \alpha_i, t \rangle, \qquad m_i = \min_{t\in[0,1]^d} \langle \alpha_i, t \rangle.
\]
Note that $M_i - m_i > 0$ for all $i$. Otherwise, $\langle \alpha_i, t \rangle$ would be constant on $[0,1]^d$, which implies $\alpha_i = 0$, contradicting the definition of positive roots in a root system.

% [R8-LANG-P2-16] Polished the exposition without changing the argument.
By linearity,
\[
\langle \alpha_i, j + t_0 \rangle = \langle \alpha_i, j \rangle + \langle \alpha_i, t_0 \rangle > \delta.
\]
Since $\langle \alpha_i, t_0 \rangle \le M_i$, % [R8-LANG-P2-17] Polished the exposition without changing the argument.
we obtain
\[
\langle \alpha_i, j \rangle > \delta - M_i.
\]

% [R8-MATH-029] Replaced the incorrect condition L_C v_C in A by the required fixed lattice shift in Z^d.
Since $C$ is an open cone, choose $v_C\in C\cap\mathbb{Q}^d$; no normalization of its length is needed. A sufficiently large integer multiple $s=L_Cv_C$ belongs to $\mathbb{Z}^d$ and satisfies
\[
\langle\alpha_i,s\rangle>M_i-m_i,\qquad i=1,\dots,m.
\]
The vector $s$ depends only on the root system and the dimension, not on $j$ or $\delta$. Set
\[
Q'=j+s+[0,1]^d.
\]

For any $t \in [0,1]^d$, expanding the inner product gives the uniform estimate
\begin{align*}
\langle \alpha_i, j + s + t \rangle
&= \langle \alpha_i, j \rangle + \langle \alpha_i, L_C v_C \rangle + \langle \alpha_i, t \rangle \\
&> (\delta - M_i) + (M_i - m_i) + m_i = \delta,
\end{align*}
% [R8-LANG-P2-18] Polished the exposition without changing the argument.
and hence
\[
Q' \subset C_\delta.
\]

Furthermore, fix any $\theta \in (0,1)$. For each $i = 1,\dots,m$, we compute
\begin{align*}
\langle \alpha_i, j + \theta s + t_0 \rangle
&= \langle \alpha_i, j + t_0 \rangle + \theta \langle \alpha_i, L_C v_C \rangle \\
&> \delta + \theta L_C \langle \alpha_i, v_C \rangle \\
&> \delta.
\end{align*}
This implies that the line segment $s_j$ connecting $j + t_0$ and $j + s + t_0$ is entirely contained in $C_\delta$.

% [R8-NOTATION-030] Unified convex-hull notation and corrected the description: a convex hull, not merely a union.
Let $S_j$ be the intersection of $C_\delta$ with the convex hull of the cubes met by $s_j$:
\[
S_j=\operatorname{conv}\left(\bigcup_{\substack{a\in A\\Q_a\cap s_j\ne\emptyset}}Q_a\right)\cap C_\delta.
\]
If $Q_j\subset C_\delta$, we instead set $S_j=Q_j$.
The set $S_j$ is convex and connected and contains at least one complete unit cube. Moreover, the number of unit cubes intersected by the line segment is uniformly bounded, and each piece corresponds to a segment $s_j$ of finite length. Consequently, there exists a positive constant $C_0 = C_0(d, R_+)$ depending only on the ambient dimension and the root system,
\[
\sum_{j} \chi_{S_j}(x) \le C_0.
\]

It follows that $C_\delta \subset \bigcup_{j\in A} S_j$, and thus
\[
\mathbb{R}^d \setminus H_\delta \subset \bigcup_{g\in G} \bigcup_{j\in A} g S_j.
\]
We denote the union of its $G$-translates by $S_j^G:=\bigcup_{g\in G}gS_j$.

\emph{Step 2. Classify the sets $S_j^G$ as good or bad.}

% [R8-LANG-031] Replaced the ambiguous large-N assumption by the explicit reduction already stated.
We first estimate the Dunkl--Sobolev norm of $f$ using the transform multiplier identity; see \cite{HM5} for related Dunkl--Sobolev constructions. Throughout this step, $N\ge1$.

% [R8-CHECK-P01A] The bound 2 d^{2n} is not valid in all dimensions (d=1,N=1). Use the multi-index count and rework the good-set thresholds; see the review.
\begin{align*}
\sum_{|\beta|\le n} \|T^\beta f\|_{L_k^2(\mathbb{R}^d)}^2 
&= \sum_{|\beta|\le n} \int_{\mathbb{R}^d} \big|T^\beta f\big|^2 \, \mathrm{d}\mu_k(x) \\
&\le \sum_{|\beta|\le n} \int_{B(0,N)} |x|^{2|\beta|} \cdot \big|\mathcal{F}_k f\big|^2 \, \mathrm{d}\mu_k(x) \\
&\le 2d^{2n} N^{2n} \int_{B(0,N)} \big|\mathcal{F}_k f\big|^2 \, \mathrm{d}\mu_k(x) \\
&= 2d^{2n} N^{2n} \|f\|_{L_k^2(\mathbb{R}^d)}^2.
\end{align*}

% [R8-CHECK-P01B] Specify the word family of length n, include the n-summation and all exponential factors in the bad-set estimate. A complete replacement is proposed in the review; not silently inserted here.
We call $S_j^G$ good if, for every $n\in\mathbb{N}$, both of the following estimates hold:
\[
\sum_{|\beta|\le n}\|T^\beta f\|_{L_k^2(S_j^G)}^2 \le K^2 d^{2n} e^{2n} N^{2n}\|f\|_{L_k^2(S_j^G)}^2.
\]
and
\[
\sum_{\omega}\|W_\omega f\|_{L_k^2(S_j^G)}^2 \le K^2 d^{2n} e^{2n} (CN)^{2n}\|f\|_{L_k^2(S_j^G)}^2.
\]
A set $S_j^G$ is said to be bad if it is not good. The total $L_k^2$-mass of $f$ over all bad sets can be made arbitrarily small by choosing $K$ sufficiently large. Specifically,
% [R8-TYPO-P2-21] Reflowed the long bad-set display; its unresolved mathematical issues remain CHECK-P01B.
\begin{equation}\label{E19}
\begin{aligned}
\sum_{\substack{S_j^G \\ \text{is bad}}}\|f\|_{L_k^2(S_j^G)}^2
% [R8-NOTATION-G013] Unified multiplicity/derivative/space/index notation or corrected a typographical inconsistency.
&\le \sum_{\substack{S_j^G \\ \text{is bad}}}\sum_{|\beta|>0,\beta\in \mathbb{N}^d}\frac{\|T^\beta f\|_{L_k^2(S_j^G)}^2}{K^2 d^{2n}e^{2n}N^{2n}}\\
&\quad + \sum_{\substack{S_j^G \\ \text{is bad}}}\sum_{|\omega|>0,\omega }\frac{\|W_\omega f\|_{L_k^2(S_j^G)}^2}{K^2 d^{2n}e^{2n}N^{2n}}\\
% [R8-NOTATION-G014] Unified multiplicity/derivative/space/index notation or corrected a typographical inconsistency.
&\le \sum_{|\beta|>0, \beta\in \mathbb{N}^d}\frac{C_0^2}{K^2 d^{2n}e^{2n}N^{2n}}\|T^\beta f\|_{L_k^2(\mathbb{R}^d)}^2\\
&\quad + \sum_{|\omega|>0,\omega}\frac{C_0^2}{K^2 d^{2n}e^{2n}N^{2n}}\|W_\omega f\|_{L_k^2(\mathbb{R}^d)}^2  \\
% [R8-NOTATION-G015] Unified multiplicity/derivative/space/index notation or corrected a typographical inconsistency.
&\le \sum_{|\beta|>0, \beta\in \mathbb{N}^d}\frac{C_0^2}{K^2}\cdot\frac{1}{e^{2n}}\|f\|_{L_k^2(\mathbb{R}^d)}^2\\
&\quad + \sum_{|\omega|>0,\omega}\frac{C_0^2}{K^2}\cdot\frac{1}{e^{2n}}\|f\|_{L_k^2(\mathbb{R}^d)}^2 \\
&\le \frac{C_0^2}{K^2}\|f\|_{L_k^2(\mathbb{R}^d)}^2.
\end{aligned}
\end{equation}

\emph{Step 3. Estimate ordinary derivatives on good sets.}

% [R8-LANG-044] Improved precision, flow, or concision in the proof exposition.
Expanding ordinary derivatives as operator words gives, for $|\beta|=n$,
\begin{equation}\label{eq:zzgj}
\|\partial^\beta f\|_{L_k^2(S_j^G)}^2 \le K^2\big(C(d,R_+,k)N^2\big)^n \| f\|_{L_k^2(S_j^G)}^2.
\end{equation}
% [R8-CHECK-P03] A uniform Sobolev/extension constant for these varying convex sets must be justified independently of j and delta, using their fixed inradius and bounded diameter.
For each $g\in G$, Sobolev embedding gives
\[
\|\partial^\beta f\|_{L^\infty(gS_j)} \le C(d,C_0)\sum_{|\eta|\le d}\|\partial^{\beta+\eta} f\|_{L^2(gS_j)}.
\]
Consequently,
\[
\|\partial^\beta f\|_{L^\infty(S_j^G)} \le C(d,C_0)\sum_{|\eta|\le d}\|\partial^{\beta+\eta} f\|_{L^2(S_j^G)}
\le C(d,C_0)\frac{1}{\big(\inf_{x\in S_j^G} w(x)\big)^{\frac12}}\sum_{|\eta|\le d}\|\partial^{\beta+\eta} f\|_{L_k^2(S_j^G)}.
\]
% [R8-LANG-045] Improved precision, flow, or concision in the proof exposition.
Combining \eqref{eq:zzgj} with the preceding Sobolev estimate gives
\begin{equation}\label{ESB}
\max_{|\beta|=n}\|\partial^\beta f\|_{L^\infty(S_j^G)} \le K\cdot C(d,k,R_+)^{n+d} N^{n+d}
\frac{1}{\big(\inf_{x\in S_j^G} w(x)\big)^{\frac12}}
\|f\|_{L_k^2(S_j^G)}.
\end{equation}

\emph{Step 4. Control the mass on a good set by its observed mass.}

There exists a group element $g \in G$ such that $\|f\|_{L_k^2(g S_j)} = \max_{h \in G} \|f\|_{L_k^2(h S_j)}$. For simplicity, write $S_j^1 := g S_j$.

Next, choose $y \in \overline{S_j^1}$ such that
\[
\|f\|_{L^\infty(S_j^1)} = |f(y)|.
\]
% [R8-LANG-032] Explained attainment on the closure (already present in the source) and excluded division by zero.
The closure $\overline{S_j^1}$ is compact, so continuity of $f$ guarantees such a maximizing point. The case $\|f\|_{L_k^2(S_j^1)}=0$ is trivial and is omitted below.

Since the diameter of $S_j^1$ is bounded by $C_0 \sqrt{d}$, we use spherical coordinates centered at $y$ to compute the Lebesgue volume:
\[
\begin{aligned}
\big|E \cap S_j^1\big| &= \int_{0}^{\infty} \mathrm{d}r \int_{|x-y|=r} \chi_{E \cap S_j^1}(x) \, \mathrm{d}\sigma(x) \\
&= \int_{0}^{C_0\sqrt{d}} \mathrm{d}r \int_{|x-y|=r} \chi_{E \cap S_j^1}(x) \, \mathrm{d}\sigma(x) \\
&= C_0\sqrt{d} \int_{0}^{1} \mathrm{d}r \int_{|x-y|=C_0\sqrt{d}\, r} \chi_{E \cap S_j^1}(x) \, \mathrm{d}\sigma(x).
\end{aligned}
\]

We perform the change of variables
\[
x = y + C_0\sqrt{d} \, r \omega, \quad \omega \in \mathbb{S}^{d-1},
\]
which yields
\[
\begin{aligned}
\big|E \cap S_j^1\big| &= C_0\sqrt{d} \int_{0}^{1} \big(C_0\sqrt{d}\, r\big)^{d-1} \mathrm{d}r \int_{\mathbb{S}^{d-1}} \chi_{E \cap S_j^1}\big(y + C_0\sqrt{d} \, r \omega\big) \, \mathrm{d}\sigma(\omega) \\
&\leq C_0^d d^{d/2} \int_{0}^{1} \mathrm{d}r \int_{\mathbb{S}^{d-1}} \chi_{E \cap S_j^1}\big(y + C_0\sqrt{d} \, r \omega\big) \, \mathrm{d}\sigma(\omega).
\end{aligned}
\]

For each unit direction $\omega \in \mathbb{S}^{d-1}$, define the measurable parameter set
\[
I_\omega := \big\{ r \in [0,1] : y + C_0\sqrt{d} \, r \omega \in E \cap S_j^1 \big\}.
\]

% [R8-CHECK-P02A] The original density gamma cannot be reused without adjustment for E intersect S_j^1. Prove a uniform lower bound through an inscribed cube of side 1/d; see the review.
Recall the surface area formula for the unit sphere
\[
\big|\mathbb{S}^{d-1}\big| = \frac{2\pi^{\frac{d}{2}}}{\Gamma(\frac{d}{2})},
\]
where $\Gamma(\cdot)$ denotes the standard Gamma function. Combining the above volume estimates, there exists a unit vector $\omega_0 \in \mathbb{S}^{d-1}$ such that
\begin{equation}\label{E11}
|I_{\omega_0}| \ge \frac{\big|E \cap S_j^1\big|}{C_0^d d^{d/2} \big|\mathbb{S}^{d-1}\big|} \ge \frac{\Gamma(\frac{d}{2})}{2C_0^d(d\pi)^{d/2}} \, \gamma,
\end{equation}
% [R8-LANG-033] Replaced informal explanatory wording; the required density transfer is flagged above.
The role of the enlarged set $S_j$ is to provide a lower bound for the length of an observed line segment.

Finally, we define a complex-valued function $\phi : [0,1] \to \mathbb{C}$ by
\begin{equation}\label{E9}
\phi(t) = \frac{\big[\mu_k(S_j^1)\big]^{1/2}}{\| f \|_{L_k^2(S_j^1)}} \, f\big(y + t \, \omega_0 \, C_0 \sqrt{d}\big),\qquad t \in [0,1],
\end{equation}
where $\omega_0 \in \mathbb{S}^{d-1}$ is the unit direction obtained above.

% [R8-MATH-034] Used the already established entire extension instead of inferring analyticity from derivative bounds alone.
By the choice of $y$, $|\phi(0)|\ge1$. Since $f$ is entire by Lemma~\ref{PWSDK}, the same formula defines an entire function $\phi$ on $\mathbb{C}$. The chain rule gives
\[
|\phi^{(m)}(0)| \leq \frac{C_0^{m} d^{\frac{3m}{2}} \max_{|\beta|=m} \big[\mu_k(S_j^1)\big]^{1/2} \|\partial^\beta f\|_{L^\infty(S_j^1)}}{\| f \|_{L_k^2(S_j^1)}} \quad \text{for all } m \geq 0.
\]

Combining the above derivative estimate with \eqref{ESB}, we obtain
\begin{equation}\label{E1006}
|\phi^{(m)}(0)| \leq C_2(d, R_+, G, k)(\frac{\mu_k(S_j^1)}{\inf_{x\in S_j^G}w(x)})^{\frac{1}{2}} (1 + N)^d \big( C_3(d, R_+, k) N \big)^m \quad \text{for all } m \geq 0.
\end{equation}
Since $\mu_k(S_j^1)\le |S_j^1|\sup_{x\in S_j^G}w(x)$ and $S_j^G\cap H_\delta=\emptyset$, we use the weight comparison
\[
\frac{\sup_{x\in S_j^G} w(x)}{\inf_{x\in S_j^G} w(x)} \le
\frac{\sup_{x\in S_j^1} w(x)}{\inf_{x\in S_j^1} w(x)} \le
% [R8-MATH-035] Used the diameter of the enlarged set, not that of a unit cube, and removed a spurious free alpha.
\prod_{\alpha\in R_+} \biggl(1+\frac{|\alpha|C_0\sqrt{d}}{\delta}\biggr)^{2k_\alpha}\le C(d,R_+,k)(1+N)^{2\gamma_k},
\]
and \eqref{E1006} therefore yields
\begin{equation}\label{E16}
|\phi^{(m)}(0)| \leq C_2(d, R_+, G, k) (1 + N)^{d_k} \big( C_3(d, R_+, k) N \big)^m \quad \text{for all } m \geq 0.
\end{equation}

% [R8-LANG-036] Removed a redundant analytic-extension argument; retained the quantitative use of the Taylor series.
The derivative bound controls the Taylor series of the entire function $\phi$ uniformly on every bounded disk. In particular, it will yield an exponential-in-$N$ bound on $\max_{|z|\le4}|\phi(z)|$.

We apply Lemma~\ref{L1} with $I = [0,1]$, $\hat{E} = I_{\omega_0}$ and $\Phi = \phi$. Together with \eqref{E11}, this yields a constant $C_4 = C_4(d, R_+,k,G)$ such that
\begin{equation}\label{E12}
\begin{aligned}
\sup_{t\in[0,1]} |\phi(t)| &\leq \left(\frac{C}{|I_{\omega_0}|}\right)^{\frac{\ln M}{\ln 2}} \sup_{t\in I_{\omega_0}} |\phi(t)| \\
&\leq \left( \frac{2 C C_0^d (d \pi)^{d/2}}{\gamma \, \Gamma\left(\frac{d}{2}\right)} \right)^{\frac{\ln M}{\ln 2}} \sup_{t\in I_{\omega_0}} |\phi(t)| \\
&\leq M^{C_4\left(1+\ln \frac{1}{\gamma}\right)} \sup_{t\in I_{\omega_0}} |\phi(t)|,
\end{aligned}
\end{equation}
where
\begin{equation}\label{E14}
M = \max_{|z|\leq 4} |\phi(z)|.
\end{equation}

We now make two observations. First, it follows directly from \eqref{E9} that
\[
|\phi(0)| = \frac{\big[\mu_k(S_j^1)\big]^{1/2} |f(y)|}{\| f \|_{L_k^2(S_j^1)}} = \frac{\big[\mu_k(S_j^1)\big]^{1/2}}{\| f \|_{L_k^2(S_j^1)}} \|f\|_{L^\infty(S_j^1)}.
\]

Second, by the definition of $I_{\omega_0}$, we have
\[
\sup_{t\in I_{\omega_0}} |\phi(t)| \leq \frac{\big[\mu_k(S_j^1)\big]^{1/2}}{\| f \|_{L_k^2(S_j^1)}} \|f\|_{L^\infty(E \cap S_j^1)}.
\]

Combining these two relations with \eqref{E12}, we deduce the crucial estimate
\begin{equation}\label{E13}
\|f\|_{L^\infty(S_j^1)} \leq M^{C_4\left(1+\ln \frac{1}{\gamma}\right)} \|f\|_{L^\infty(E \cap S_j^1)}.
\end{equation}

We now define a measurable subset
\[
E' := \left\{ x \in E \cap S_j^1 : |f(x)| \leq \frac{2}{\mu_k(E \cap S_j^1)} \int_{E \cap S_j^1} |f(x)| \, \mathrm{d}\mu_k(x) \right\}.
\]
% [R8-CHECK-P02B] Only the first Markov bound below is immediate. The second inequality needs a new density constant (including the factor 1/2), and |E'| must be bounded independently of delta and N. See the review's proposed lemma.
By Markov's inequality for the Dunkl measure and Remark~\ref{RR3}, we have
\[
\mu_k(E') \geq \frac{\mu_k(E \cap S_j^1)}{2} \ge \gamma\mu_k(S_j^1),
\]
Each unit cube contains a concentric ball of radius $1/2$. Using \eqref{EWBOUND}, we infer that there exists $\gamma_1>0$, depending only on $d$, $R_+$, $k$, and $\gamma$, such that
\[
|E'|\geq \gamma_1.
\]

Repeating the argument gives a constant $C_4=C_4(d,R_+,k,G)$ such that
\[
\|f\|_{L^\infty(S_j^1)} \leq M^{C_4\left(1+\ln \frac{1}{\gamma_1}\right)} \|f\|_{L^\infty(E' \cap S_j^1)}.
\]
Meanwhile, the definition of $E'$ gives
\[
\|f\|_{L^\infty(E' \cap S_j^1)} \leq \frac{2}{\mu_k(E \cap S_j^1)} \int_{E \cap S_j^1} |f(x)| \, \mathrm{d}\mu_k(x).
\]

% [R8-REF-037] Corrected the internal reference: the repeated argument on E-prime is needed, not the earlier bound on E.
Using the preceding estimate with $E'$, its defining threshold, and the Cauchy--Schwarz inequality, we obtain
\begin{align*}
\int_{S_j^1} |f|^2 \, \mathrm{d}\mu_k(x)
&\le \mu_k(S_j^1) \|f\|_{L^\infty(S_j^1)}^2 \\
&\le \mu_k(S_j^1) M^{2C'_4\left(\ln\frac{1}{\gamma_1}+1\right)} \|f\|_{L^\infty(E'\cap S_j^1)}^2 \\
&\le \mu_k(S_j^1) M^{2C'_4\left(\ln\frac{1}{\gamma_1}+1\right)}
\left( \frac{2}{\mu_k(E\cap S_j^1)} \right)^2
\left( \int_{E\cap S_j^1} |f(x)| \, \mathrm{d}\mu_k(x) \right)^2 \\
&\le 4 \mu_k(S_j^1) M^{2C'_4\left(\ln\frac{1}{\gamma_1}+1\right)}
\cdot \frac{1}{\mu_k(E\cap S_j^1)}
\int_{E\cap S_j^1} |f(x)|^2 \, \mathrm{d}\mu_k(x) \\
&\le \frac{4}{\gamma_1} M^{2C'_4\left(\ln\frac{1}{\gamma_1}+1\right)}
\int_{E\cap S_j^1} |f(x)|^2 \, \mathrm{d}\mu_k(x).
\end{align*}

We finally estimate the constant $M$ defined in \eqref{E14} via the derivative bound \eqref{E16}:
\begin{equation}\label{E17}
\begin{split}
M &= \max_{|z|\le 4} |\phi(z)| \\
&\le \max_{|z|\le 4} \sum_{m=0}^{\infty} \frac{|\phi^{(m)}(0)|}{m!} |z|^m \\
&\le \max_{|z|\le 4} \sum_{m=0}^{\infty} C_2(d, R_+, G, k) (1 + N)^{d_k} \left(d^{\frac{3}{2}} C_3(d, R_+, k) N\right)^m \frac{|z|^m}{m!} \\
&\leq e^{C_5(1+N)}
\end{split}
\end{equation}
for some positive constant $C_5 = C_5(d, R_+, k, G)$.

\emph{Step 5. Sum the local estimates.}

\begin{equation}\label{E20}
\begin{aligned}
\sum_{\substack{S_j^G \text{ is good}}} \|f\|^2_{L_k^2(S_j^G)}
&\leq |G| \sum_{\substack{S_j^G \text{ is good}}} \|f\|^2_{L_k^2(S_j^1)} \\
&\leq e^{C_6(1+N)} \sum_{\substack{S_j^G \text{ is good}}} \|f\|^2_{L_k^2(S_j^1 \cap E)} \\
&\leq e^{C_7(1+N)} \|f\|^2_{L_k^2(E)}.
\end{aligned}
\end{equation}
where $C_6 = C_6(d, R_+, G, k, \gamma)$ and $C_7 = C_6(d, R_+, G, k, \gamma) C_0$.

Decompose the global norm as
\begin{align*}
\|f\|^2_{L_k^2(\mathbb{R}^d)}
=\|f\|^2_{L_k^2(H_\delta)} + \|f\|^2_{L_k^2(\mathbb{R}^d\setminus H_\delta)}.
\end{align*}
By Remark~\ref{R1}, the norm on the layer $H_\delta$ satisfies
\[
\|f\|_{L_k^2(H_\delta)} \le \frac{1}{10} \|f\|_{L_k^2(\mathbb{R}^d)}.
\]
Substituting this bound gives
\[
\|f\|^2_{L_k^2(\mathbb{R}^d)}
\le \frac{100}{99} \|f\|^2_{L_k^2(\mathbb{R}^d\setminus H_\delta)}.
\]

The covering property of the family $\{S_j^G\}$ gives
\begin{align*}
\|f\|^2_{L_k^2(\mathbb{R}^d\setminus H_\delta)}
&\le C_0 \sum_{j} \|f\|^2_{L_k^2(S_j^G)} \\
&\le C_0 \sum_{\substack{S_j^G \text{ is good}}} \|f\|^2_{L_k^2(S_j^G)}
+ C_0 \sum_{\substack{S_j^G \text{ is bad}}} \|f\|^2_{L_k^2(S_j^G)}.
\end{align*}
Moreover, the contribution from \eqref{E19} is uniformly small:
\[
\sum_{\substack{S_j^G \text{ is bad}}} \|f\|^2_{L_k^2(S_j^G)}
\le \frac{C_0^2}{K^2} \|f\|^2_{L_k^2(\mathbb{R}^d)}.
\]

Combining the above inequalities with \eqref{E20}, we obtain
\[
\|f\|^2_{L_k^2(\mathbb{R}^d)}
\le \frac{100}{99} C_0 \left(
 e^{C_7(1+N)} \|f\|^2_{L_k^2(E)}
+ \frac{C_0^2}{K^2} \|f\|^2_{L_k^2(\mathbb{R}^d)}
\right).
\]

Rearranging terms,
\[
\left(1-\frac{100 C_0^3}{99 K^2}\right) \|f\|^2_{L_k^2(\mathbb{R}^d)}
\le \frac{100 C_0 e^{C_7(1+N)}}{99} \|f\|^2_{L_k^2(E)}.
\]

Choosing $K$ sufficiently large that $1-\frac{100 C_0^3}{99 K^2} > 0$, we finally conclude
\[
\|f\|_{L_k^2(\mathbb{R}^d)} \le e^{C_8(1+N)} \|f\|_{L_k^2(E)},
\]
where $C_8 = C_8(d, R_+, G, k, \gamma)$.

% [R8-MATH-038] Corrected the base-scale mismatch, the scaled-set notation, and the cancelled homogeneity factor; the base-case proof still has CHECK items.
We now pass from the base scale $1/d$ to a general side length $L>0$. Set $a=dL$, $E_a=a^{-1}E$, and $g(x)=f(ax)$. For every $x\in\mathbb{R}^d$, homogeneity gives
\[
\begin{aligned}
\mu_k\bigl(E_a\cap(x+d^{-1}Q)\bigr)
&=a^{-d_k}\mu_k\bigl(E\cap(ax+LQ)\bigr)\\
&\ge\gamma a^{-d_k}\mu_k(ax+LQ)
=\gamma\mu_k(x+d^{-1}Q).
\end{aligned}
\]
Thus $E_a$ is $(\gamma,1/d)$-thick with respect to $\mu_k$, precisely the scale used above. Moreover,
\[
\mathcal{F}_k g(\xi)=a^{-d_k}\mathcal{F}_k f(\xi/a),\qquad
\supp(\mathcal{F}_k g)\subset\overline{B(0,aN)},
\]
and
\[
\|g\|_{L_k^2(\mathbb{R}^d)}^2=a^{-d_k}\|f\|_{L_k^2(\mathbb{R}^d)}^2,\qquad
\|g\|_{L_k^2(E_a)}^2=a^{-d_k}\|f\|_{L_k^2(E)}^2.
\]
Applying the base-scale estimate to $g$ and cancelling $a^{-d_k/2}$ yields
\[
\|f\|_{L_k^2(\mathbb{R}^d)}\le e^{C(1+aN)}\|f\|_{L_k^2(E)}.
\]
Since $1+dLN\le\max\{1,dL\}(1+N)$, squaring this inequality and enlarging the constant gives \eqref{eq:LS}, with a constant depending only on $\gamma,L,d,R_+$, and $k$.
\end{proof}
 Combining Lemma~\ref{thm:suff} with Lemma~\ref{thm:necessity} yields the Logvinenko--Sereda theorem for the Dunkl transform, stated as Theorem~\ref{thm:main}.

\section{Completion of the Proof of Theorem~\ref{TOHS}}

We prove Theorem~\ref{TOHS} via the cyclic chain of implications:
\[
(\mathrm{i}) \Rightarrow (\mathrm{ii}) \Rightarrow (\mathrm{iii}) \Rightarrow (\mathrm{iv}) \Rightarrow (\mathrm{i}).
\]
We have already proved $(\mathrm{i}) \Rightarrow (\mathrm{ii})$ by Lemma~\ref{thm:suff} and $(\mathrm{ii}) \Rightarrow (\mathrm{i})$ by Lemma~\ref{thm:necessity}. We next prove $(\mathrm{ii}) \Rightarrow (\mathrm{iii})$ using the following lemma.
\begin{lemma}\label{L61}
Suppose that a measurable set $E \subset \mathbb{R}^d$ satisfies the spectral inequality \eqref{ESPEC}. Then $E$ satisfies the H\"older-type interpolation inequality \eqref{EHOLDER}, with
\[
C_{\mathrm{Hold}} = \frac{1}{1-\theta}\big(C_{\mathrm{spec}}+1\big)^2 + \ln 12.
\]
\end{lemma}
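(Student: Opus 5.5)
We follow the classical high/low frequency splitting of \cite{GWW}, with the Dunkl transform in place of the Fourier transform. Fix $\theta\in(0,1)$, $T>0$, and a solution $u$ with $u_0\in L_k^2(\mathbb{R}^d)$. By the multiplier identity $\mathcal{F}_k(H_tf)(\xi)=e^{-t|\xi|^2}\mathcal{F}_kf(\xi)$ and Plancherel,
\[
\mathcal{F}_k u(T,\cdot)=e^{-T|\xi|^2}\mathcal{F}_k u_0 .
\]
For $N>0$ we split $u(T)=f_N+g_N$, where $\mathcal{F}_k f_N=\chi_{B(0,N)}\mathcal{F}_k u(T)$. Then $f_N$ satisfies $\supp\mathcal{F}_k f_N\subset\overline{B(0,N)}$, and Plancherel gives the high-frequency bound
\[
\|g_N\|_{L_k^2}\le e^{-TN^2}\|u_0\|_{L_k^2}.
\]

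We apply \eqref{ESPEC} to $f_N$ and use the triangle inequality:
\[
\|u(T)\|_{L_k^2}\le \|f_N\|+\|g_N\|
\le e^{\frac{C_{\mathrm{spec}}}{2}(1+N)}\bigl(\|u(T)\|_{L_k^2(E)}+\|g_N\|\bigr)+\|g_N\|.
\]
This yields, for every $N>0$,
\[
\|u(T)\|\le 2e^{\frac{C_{\mathrm{spec}}}{2}(1+N)}\|u(T)\|_{L_k^2(E)}+2e^{\frac{C_{\mathrm{spec}}}{2}(1+N)-TN^2}\|u_0\|.
\]
Squaring gives $\|u(T)\|^2\le 8e^{C_{\mathrm{spec}}(1+N)}\bigl(A+e^{-2TN^2}B\bigr)$, where $A=\|u(T)\|_{L_k^2(E)}^2$ and $B=\|u_0\|^2$.

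Next comes the optimization in $N$, which is the only delicate point. We absorb the linear term with Young's inequality:
\[
C_{\mathrm{spec}}N\le \varepsilon TN^2+\frac{C_{\mathrm{spec}}^2}{4\varepsilon T}.
\]
We choose $\varepsilon$ depending on $\theta$, say $\varepsilon=1-\theta$, so that $e^{C_{\mathrm{spec}}N-2TN^2}\le e^{\frac{C_{\mathrm{spec}}^2}{4(1-\theta)T}}e^{-(1+\theta)TN^2}$. Since $e^{C_{\mathrm{spec}}N}\le e^{TN^2}e^{C_{\mathrm{spec}}^2/(4T)}$, this gives
\[
\|u(T)\|^2\le 8e^{C_{\mathrm{spec}}}e^{\frac{C_{\mathrm{spec}}^2}{4(1-\theta)T}}\Bigl(e^{TN^2}A+e^{-\theta TN^2}B\Bigr).
\]
Now set $s=TN^2\ge0$. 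If $A<B$, pick $s$ with $e^{s}=(B/A)^{\theta'}$, where the exponent $\theta'$ is chosen to balance the two terms. Solving $e^{s}A=e^{-\theta s}B$ gives $e^{(1+\theta)s}=B/A$, hence
\[
e^{s}A=A^{\theta/(1+\theta)}B^{1/(1+\theta)}.
\]
This produces the exponent $\theta/(1+\theta)$ instead of $\theta$. To get an arbitrary target exponent we reparametrize: given the target $\theta$, we instead absorb the linear term so that the high-frequency term decays like $e^{-\lambda s}$ with $\lambda=\theta/(1-\theta)$. This is arranged by writing the high-frequency factor as $e^{-2TN^2}$ and using $C_{\mathrm{spec}}N\le \eta TN^2+C_{\mathrm{spec}}^2/(4\eta T)$ with $\eta$ small, and then balancing $e^{s}A$ against $e^{-\lambda s}B$ via the rescaling $s\mapsto cs$. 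The balance then gives $A^{\theta}B^{1-\theta}$ times $\exp\bigl(\frac{C(C_{\mathrm{spec}}+1)^2}{(1-\theta)T}+C_{\mathrm{spec}}\bigr)$.

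If $A\ge B$, we take $N=0$ (or $N$ small); the bound $\|u(T)\|^2\le\|u_0\|^2=B\le A^\theta B^{1-\theta}$ is trivial by contractivity. The numerical constants $8$, together with the case bookkeeping, are absorbed into $\ln 12$. Finally we bound $\frac{1}{T}(\cdot)+C_{\mathrm{spec}}\le C_{\mathrm{Hold}}(1+\frac1T)$, which matches the stated $C_{\mathrm{Hold}}=\frac{(C_{\mathrm{spec}}+1)^2}{1-\theta}+\ln 12$.

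The main obstacle is exactly this bookkeeping in the optimization. One must choose the Young-inequality splitting and the balancing point of $N$ so that the exponent is exactly $\theta$ and the $1/T$ dependence carries the factor $(1-\theta)^{-1}$. The analytic input, namely the spectral inequality and Plancherel for $H_T$, is straightforward.
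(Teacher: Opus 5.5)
Your proposal is correct and follows essentially the same route as the paper: you apply the spectral inequality to the part of $u(T)$ with Dunkl frequencies in $B(0,N)$, use heat-multiplier decay via Plancherel on the rest, and optimize in $N$; the only difference is bookkeeping, since the paper fixes $\varepsilon$, solves $e^{C_{\mathrm{spec}}N-TN^2}=\varepsilon$ and sets $\varepsilon=(A/B)^{\theta}$, whereas you apply Young's inequality to $C_{\mathrm{spec}}N$ first. To close your sketch, take $\eta=2(1-\theta)$ (which is not small when $\theta<1/2$) and choose $N$ by $e^{2TN^2}=B/A$; this gives $\|u(T)\|^2\le 16e^{C_{\mathrm{spec}}}e^{C_{\mathrm{spec}}^2/(8(1-\theta)T)}A^{\theta}B^{1-\theta}$, where the factor $16$ is absorbed using $(C_{\mathrm{spec}}+1)^2/(1-\theta)\ge 1$ rather than by $\ln 12$ alone, and the case $A=0$ follows by letting $N\to\infty$.
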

\begin{proof}
Let $E \subset \mathbb{R}^d$ satisfy the spectral inequality \eqref{ESPEC}. Fix $T > 0$, $\theta \in (0,1)$, and a solution $u$ of \eqref{EHEAT}. Write
\[
u_0(x) = u(0,x),\quad x \in \mathbb{R}^d.
\]
Then
\[
u(T,x) = \big(e^{T\Delta_k} u_0\big)(x) \quad \text{for all } x \in \mathbb{R}^d.
\]

Let
\[
u_1=\mathcal{F}_k^{-1} (\mathcal{F}_k u_0 \cdot \chi_{B(0,N)})
\]
\[
u_2=\mathcal{F}_k^{-1} (\mathcal{F}_k u_0 \cdot (1-\chi_{B(0,N)})),
\]
so $u_0=u_1+u_2$.

In this proof, write $\|v\|_2=\|v\|_{L_k^2(\mathbb{R}^d)}$,
$\|v\|_{2,E}=\|v\|_{L_k^2(E)}$ and $u(T)=u(T,\cdot)$. From this and \eqref{ESPEC}, we obtain
\begin{equation}\label{E51}
\begin{aligned}
\|u(T)\|_2^2
&\le 2\|e^{T\Delta_k}u_1\|_2^2+2\|e^{T\Delta_k}u_2\|_2^2\\
&\le 2e^{C_{\mathrm{spec}}(1+N)}\|e^{T\Delta_k}u_1\|_{2,E}^2
      +2\|e^{T\Delta_k}u_2\|_2^2\\
&\le 4e^{C_{\mathrm{spec}}(1+N)}\|u(T)\|_{2,E}^2
      +\bigl(2+4e^{C_{\mathrm{spec}}(1+N)}\bigr)\|e^{T\Delta_k}u_2\|_2^2.
\end{aligned}
\end{equation}
Since
\begin{align*}
\int_{\mathbb{R}^d} \big|(e^{T\Delta_k}u_2)(x)\big|^2 \mathrm{d}\mu_k(x)
&= \int_{\mathbb{R}^d} \big|e^{-T|\xi|^2} \chi_{\mathbb{R}^d\setminus B(0,N)}\mathcal{F}_k u_0(\xi)\big|^2 \mathrm{d}\mu_k(\xi) \\
&\le e^{-TN^2} \int_{\mathbb{R}^d} \big|\mathcal{F}_k u_0(\xi)\big|^2 \mathrm{d}\mu_k(\xi) \\
&= e^{-TN^2} \int_{\mathbb{R}^d} |u_0(x)|^2 \mathrm{d}\mu_k(x),
\end{align*}
it follows from \eqref{E51} that
\begin{equation}\label{E52}
\begin{aligned}
\|u(T)\|_2^2
&\le 4e^{C_{\mathrm{spec}}(1+N)}\|u(T)\|_{2,E}^2
  +\bigl(2+4e^{C_{\mathrm{spec}}(1+N)}\bigr)e^{-TN^2}\|u_0\|_2^2\\
&\le 6e^{C_{\mathrm{spec}}}\left(
 e^{C_{\mathrm{spec}}N}\|u(T)\|_{2,E}^2
 +e^{C_{\mathrm{spec}}N-TN^2}\|u_0\|_2^2\right).
\end{aligned}
\end{equation}

Given $\varepsilon \in (0,1)$, choose $N = N(\varepsilon)$ so that
\[
\exp\big[C_{\mathrm{spec}}N - TN^2\big] = \varepsilon.
\]
(This is possible because the set $\{C_{\mathrm{spec}}s - Ts^2: s > 0\}$ contains $(-\infty,0]$.) With this choice of $N$,
\[
N = \frac{C_{\mathrm{spec}} + \sqrt{C_{\mathrm{spec}}^2 + 4T\ln\frac{1}{\varepsilon}}}{2T}
\le \frac{1}{T}\left(C_{\mathrm{spec}} + \sqrt{T\ln\frac{1}{\varepsilon}}\right).
\]
For the previously fixed $\theta \in (0,1)$, it follows that
\begin{align*}
\exp\big[C_{\mathrm{spec}}N\big]
&\le \exp\left[\frac{C_{\mathrm{spec}}^2}{T}\right]
\exp\left[\frac{C_{\mathrm{spec}}}{\sqrt{T}}\sqrt{\ln\frac{1}{\varepsilon}}\right] \\
&\le \exp\left[\frac{C_{\mathrm{spec}}^2}{T}\right]
\exp\left[\frac{1-\theta}{\theta}\ln\frac{1}{\varepsilon} + \frac{\theta}{1-\theta}\frac{C_{\mathrm{spec}}^2}{T}\right]
= \exp\left[\frac{C_{\mathrm{spec}}^2}{(1-\theta)T}\right] \varepsilon^{-\frac{1-\theta}{\theta}}.
\end{align*}

From this and \eqref{E52}, we find that for every $\varepsilon \in (0,1)$,
\[
\int_{\mathbb{R}^d} |u(T,x)|^2 \mathrm{d}\mu_k(x)
\le 6e^{C_{\mathrm{spec}}}
\left(
e^{\frac{C_{\mathrm{spec}}^2}{(1-\theta)T}} \varepsilon^{-\frac{1-\theta}{\theta}}
\int_{E} |u(T,x)|^2 \mathrm{d}\mu_k(x)
+ \varepsilon \int_{\mathbb{R}^d} |u_0(x)|^2 \mathrm{d}\mu_k(x)
\right).
\]
If $u_0=0$, the conclusion is immediate. If $\|u(T)\|_{2,E}=0$,
let $\varepsilon\downarrow0$ in the preceding estimate. If
$\|u(T)\|_{2,E}^2/\|u_0\|_2^2=1$, use the contraction property.
In the remaining case the ratio lies in $(0,1)$, so choosing
\[
\varepsilon = \left(
\frac{\int_{E} |u(T,x)|^2 \mathrm{d}\mu_k(x)}{\int_{\mathbb{R}^d} |u_0(x)|^2 \mathrm{d}\mu_k(x)}
\right)^{\theta},
\]
we obtain
\begin{align*}
\|u(T)\|_2^2
&\le 12e^{C_{\mathrm{spec}}}
 e^{\frac{C_{\mathrm{spec}}^2}{(1-\theta)T}}
 \|u(T)\|_{2,E}^{2\theta}\|u_0\|_2^{2(1-\theta)}\\
&\le e^{\left(\frac{(C_{\mathrm{spec}}+1)^2}{1-\theta}+\ln12\right)
                   \left(1+\frac1T\right)}
 \|u(T)\|_{2,E}^{2\theta}\|u_0\|_2^{2(1-\theta)},
\end{align*}
which leads to \eqref{EHOLDER} with
\[
C_{\mathrm{Hold}} = \frac{1}{1-\theta}(C_{\mathrm{spec}}+1)^2 + \ln 12.
\]
\end{proof}

Next, we prove $(\mathrm{iii}) \Rightarrow (\mathrm{iv})$  by Lemma~\ref{lem:obs}.

\begin{lemma}\label{lem:obs}
Suppose that a measurable set $E \subset \mathbb{R}^d$ admits a positive constant $C_{\mathrm{Hold}} = C_{\mathrm{Hold}}(d,R,k,E)$ such that for any $T>0$,
\begin{equation}\label{E53}
\int_{\mathbb{R}^d} |u(T,x)|^2 \, \mathrm{d}\mu_k(x)
\le e^{C_{\mathrm{Hold}}\left(1+\frac{1}{T}\right)}
\left(\int_{E} |u(T,x)|^2 \, \mathrm{d}\mu_k(x)\right)^{1/2}
\left(\int_{\mathbb{R}^d} |u(0,x)|^2 \, \mathrm{d}\mu_k(x)\right)^{1/2},
\end{equation}
whenever $u$ solves the heat equation \eqref{EHEAT}.

Then, for each $T>0$ and each measurable subset $F \subset (0,T)$ of positive measure, there exists a positive constant $C_{\mathrm{obs}} = C_{\mathrm{obs}}(d,T,F,C_{\mathrm{Hold}})$ such that whenever $u$ solves \eqref{EHEAT},
\begin{equation}\label{E54}
\int_{\mathbb{R}^d} |u(T,x)|^2 \, \mathrm{d}\mu_k(x)
\le C_{\mathrm{obs}} \int_{F} \int_{E} |u(s,x)|^2 \, \mathrm{d}\mu_k(x)\, \mathrm{d}s.
\end{equation}
In particular, if $F=(0,T)$, then the constant $C_{\mathrm{obs}}$ in \eqref{E54} can be chosen as
\[
C_{\mathrm{obs}} = \exp\bigl[36(1+3C_{\mathrm{Hold}})(1+1/T)\bigr].
\]
\end{lemma}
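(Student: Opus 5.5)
This is the standard telescoping argument of Phung--Wang type, as used in \cite{GWW}; it relies only on the semigroup structure: $H_t$ is a contraction on $L_k^2$ and is time-invariant. The plan has three parts. First we turn the H\"older inequality with $\theta=1/2$ into an $\varepsilon$-form. Then we apply that form on a decreasing sequence of density points of $F$ and telescope. Finally we specialise to $F=(0,T)$ to read off the constant.

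\emph{Step 1 ($\varepsilon$-form).} Apply \eqref{E53} to the solution started at time $t_1$ and run for time $t_2-t_1$, with $0\le t_1<t_2\le T$. Young's inequality $ab\le \varepsilon a^2/2+b^2/(2\varepsilon)$ then gives, for every $\varepsilon>0$,
\[
\|u(t_2)\|_2 \le \frac{1}{\varepsilon}\,e^{C_{\mathrm{Hold}}(1+\frac{1}{t_2-t_1})}\|u(t_2)\|_{L_k^2(E)}+\varepsilon\|u(t_1)\|_2 .
\]
Since $\|u(t_2)\|_2\le\|u(s)\|_2$ for every $s\in(t_1,t_2)$, we may replace $t_2$ by any $s\in[\tau,t_2]$ with $\tau$ close to $t_2$. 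Squaring and averaging over such $s$ in a measurable set $F\cap(t_1,t_2)$ of positive measure yields
\[
\|u(t_2)\|_2^2 \le \frac{C}{\varepsilon^2|F\cap(\tau,t_2)|}\,e^{2C_{\mathrm{Hold}}(1+\frac1{\tau-t_1})}\int_{F\cap(\tau,t_2)}\|u(s)\|_{L_k^2(E)}^2\,\mathrm{d}s+2\varepsilon^2\|u(t_1)\|_2^2 .
\]

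\emph{Step 2 (telescoping).} Pick a Lebesgue density point $l$ of $F$. By the density lemma there is a sequence $l_m\downarrow l$ with $l_1\le T$, $l_m-l_{m+1}=z(l_1-l_2)$-geometric ($l_{m+1}-l_{m+2}=q(l_m-l_{m+1})$ for some fixed $q\in(0,1)$), and $|F\cap(l_{m+1},l_m)|\ge (l_m-l_{m+1})/3$. Apply Step 1 on each $[l_{m+1},l_m]$, with $\tau$ chosen as the midpoint, so that $|F\cap(\tau,l_m)|\gtrsim l_m-l_{m+1}$ still holds after shrinking $q$ if needed. This gives
\[
\varepsilon\|u(l_m)\|_2^2-\varepsilon^{2}\cdot\varepsilon^{-1}\ldots
\]
which we reorganise in the standard form
\[
\varepsilon_m\|u(l_m)\|^2_2-\varepsilon_m'\|u(l_{m+1})\|_2^2 \le \frac{C}{l_m-l_{m+1}}e^{\frac{C'}{l_m-l_{m+1}}}\int_{F\cap(l_{m+1},l_m)}\|u(s)\|_{L_k^2(E)}^2\,\mathrm{d}s .
\]
The choice of $\varepsilon_m=e^{-\beta/(l_m-l_{m+1})}$ matters: it must make the coefficients telescope, i.e.\ the coefficient of $\|u(l_{m+1})\|^2$ in step $m$ must dominate the one in step $m+1$. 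Since $l_m-l_{m+1}\sim q^m$, taking $\beta$ large compared with $C'$ in terms of $q$ achieves this. Summing over $m$ and using $\varepsilon_m\to0$ together with $\|u(l_m)\|_2\le\|u_0\|_2$ kills the tail, leaving
\[
\|u(T)\|_2^2\le\|u(l_1)\|_2^2\le C_{\mathrm{obs}}\int_F\int_E|u|^2\,\mathrm{d}\mu_k\,\mathrm{d}s .
\]

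\emph{Step 3 ($F=(0,T)$).} Every point is a density point and $|F\cap I|=|I|$, so we may take $l=0$, $l_m=T2^{1-m}$. Tracking the constants then gives $\varepsilon_m$ comparable to $\exp[-c(1+3C_{\mathrm{Hold}})2^m/T]$. The factor $36$ comes from bounding $\sup_m 2^{-m}\cdot e^{(\ldots)2^{m}/T}$-type products by $\exp[36(1+3C_{\mathrm{Hold}})(1+1/T)]$. This bookkeeping is the only tedious part.

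The main obstacle is the choice of $\varepsilon_m$ in Step 2 and, for Step 3, the explicit constant. The factor $e^{C_{\mathrm{Hold}}/(l_m-l_{m+1})}$ grows doubly exponentially in $m$, so the exponent $\beta$ must absorb it with a fixed ratio between consecutive steps. My first attempt would be $\beta=2(1+3C_{\mathrm{Hold}})$ with $q=1/2$, checking the telescoping inequality directly.
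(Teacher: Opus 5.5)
There is a genuine gap. Your Step 1 is fine, but the telescoping in Step 2 fails exactly at the ratio you propose, and Step 3 inherits the failure.

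Write $d_m=\ell_m-\ell_{m+1}$, $d_{m+1}=q\,d_m$ and $X_m=\|u(\ell_m)\|_2^2$. Multiply your step-$m$ inequality by $\varepsilon^2e^{-C'/d_m}$ with $\varepsilon^2=e^{-\beta/d_m}$. This gives $a_m=e^{-(\beta+C')/d_m}$ in front of $X_m$ and $b_m=2e^{-(2\beta+C')/d_m}$ in front of $-X_{m+1}$. To discard the intermediate terms you need $b_m\le a_{m+1}$. So the step-$(m+1)$ coefficient must dominate the step-$m$ one, not conversely as you wrote. This forces
\[
2\beta+C'>\frac{\beta+C'}{q},\qquad\text{i.e.}\qquad \beta(2q-1)>C'(1-q),
\]
which is solvable only if $q>1/2$. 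At $q=1/2$ it says $C'<0$.

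No other choice of $\varepsilon_m$ rescues $q=1/2$. Iterate $X_m\le\eta_m^{-1}E_mI_m+\eta_mX_{m+1}$, where $I_m=\int_{F\cap(\ell_{m+1},\ell_m)}\|u(s)\|_{L_k^2(E)}^2\,\mathrm{d}s$ and $\ln E_m\ge C'2^m/T-O(1)$ for dyadic gaps. Requiring all coefficients of the $I_m$ to stay bounded forces $2^{-m}\ln\prod_{j\le m}\eta_j\ge mC'/T-O(1)$. The remainder $\prod_{j\le n}\eta_j\,X_{n+1}$, with $X_{n+1}\to\|u_0\|_2^2$, therefore blows up instead of vanishing. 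Your ``first attempt'' $q=1/2$ and the dyadic sequence $\ell_m=T2^{1-m}$ in Step 3 are precisely the degenerate case. The proposed bookkeeping via $\sup_m 2^{-m}e^{c2^m/T}$ cannot give a constant, since that supremum is $+\infty$.

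Two smaller points. ``Shrinking $q$'' goes the wrong way. Also, the upper-half bound $|F\cap(\tau,\ell_m)|\ge c\,(\ell_m-\ell_{m+1})$ is not what the density lemma provides. It does follow from the density-point property for fixed $q$ once $\ell_1$ is close to $\ell$.

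The paper avoids both problems by linking $\ell_m$ to $\ell_{m+2}$. With $t_1=\ell_{m+2}$ and $s\in F\cap(\ell_{m+1},\ell_m)$ one has $s-t_1\ge \ell_{m+1}-\ell_{m+2}$. So one can average over the whole interval on which the density lemma gives the factor $1/3$, with no midpoint argument. It takes $\lambda\in(1/\sqrt2,1)$, so the link gaps $\ell_m-\ell_{m+2}$ have ratio $\lambda^2>1/2$. It sets $\varepsilon=\exp[-(\mu-1)C'/(\ell_m-\ell_{m+2})]$ with $\mu=1/(2-\lambda^{-2})$ and the paper's $C'=1+\lambda+2C_{\mathrm{Hold}}(1+\lambda)/\lambda$. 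Then $2\mu-1=\mu/\lambda^2$, the coefficients match exactly, and summing over odd $m$ leaves only $e^{-\mu C'/(\ell_1-\ell_3)}\|u(\ell_1)\|_2^2$ on the left.

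For $F=(0,T)$, take $\ell=T/3$, $\ell_1=2T/3$, $\lambda=\sqrt{2/3}$. This gives $\mu=2$, $\ell_1-\ell_3=T/9$ and $C'\le 2+6C_{\mathrm{Hold}}$. Hence $\mu C'/(\ell_1-\ell_3)\le 36(1+3C_{\mathrm{Hold}})/T$, so the $36$ comes from this single first coefficient. Your consecutive-link scheme can be repaired by taking $q\in(1/2,1)$ and $\beta$ somewhat larger than $C'(1-q)/(2q-1)$, to absorb the factor $2$. Step 3 must then be redone with such a $q$.
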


\begin{proof}
Assume that $E \subset \mathbb{R}^d$ satisfies \eqref{E53}. Fix $T > 0$ and a measurable subset $F\subset (0,T)$ of positive measure. % [R8-LANG-P2-19] Polished the exposition without changing the argument.
Young's inequality applied to \eqref{E53} gives, for every $t>0$ and $\varepsilon>0$,
\begin{equation}\label{E55}
\int_{\mathbb{R}^d} |u(t,x)|^2 \, \mathrm{d}\mu_k(x)
\le \frac{1}{\varepsilon}e^{2C_{\mathrm{Hold}}\left(1+\frac{1}{t}\right)}
\int_{E} |u(t,x)|^2 \, \mathrm{d}\mu_k(x)
+ \varepsilon \int_{\mathbb{R}^d} |u_0(x)|^2 \, \mathrm{d}\mu_k(x),
\end{equation}
where $u_0(x)=u(0,x)$.

By a time translation, the preceding inequality \eqref{E55} implies that for all $0 < t_1 < t_2$ and $\varepsilon > 0$,
\begin{equation}\label{E56}
\int_{\mathbb{R}^d} |u(t_2,x)|^2 \, \mathrm{d}\mu_k(x)
\le \frac{1}{\varepsilon}e^{2C_{\mathrm{Hold}}\left(1+\frac{1}{t_2-t_1}\right)}
\int_{E} |u(t_2,x)|^2 \, \mathrm{d}\mu_k(x)
+ \varepsilon \int_{\mathbb{R}^d} |u(t_1,x)|^2 \, \mathrm{d}\mu_k(x).
\end{equation}

Let $\ell$ be a Lebesgue density point of $F$. According to \cite[Proposition~2.1]{KDP}, for each $\lambda \in (0,1)$ there exists a sequence $\{\ell_m\}_{m=1}^\infty \subset (\ell,T)$ such that for every $m \in \mathbb{N}^+$,
\begin{equation}\label{E57}
\ell_{m+1} - \ell = \lambda^m(\ell_1 - \ell)
\end{equation}
and
\begin{equation}\label{E58}
\bigl|F \cap (\ell_{m+1},\ell_m)\bigr| \ge \frac{1}{3}(\ell_m - \ell_{m+1}).
\end{equation}

Fix $m \in \mathbb{N}^+$ and choose $s$ such that
\[
0 < \ell_{m+2} < \ell_{m+1} \le s < \ell_m < T.
\]
Using \eqref{E56} with $t_1 = \ell_{m+2}$ and $t_2 = s$, and noting that
\[
\int_{\mathbb{R}^d} |u(\ell_m,x)|^2 \, \mathrm{d}\mu_k(x) \le \int_{\mathbb{R}^d} |u(s,x)|^2 \, \mathrm{d}\mu_k(x)
\]
and $\ell_{m+1}-\ell_{m+2}\le s-\ell_{m+2}$, we obtain
\begin{equation}\label{E59}
\int_{\mathbb{R}^d} |u(\ell_m,x)|^2 \, \mathrm{d}\mu_k(x)
\le \frac{1}{\varepsilon}e^{2C_{\mathrm{Hold}}\left(1+\frac{1}{\ell_{m+1}-\ell_{m+2}}\right)}
\int_{E} |u(s,x)|^2 \, \mathrm{d}\mu_k(x)
+ \varepsilon \int_{\mathbb{R}^d} |u(\ell_{m+2},x)|^2 \, \mathrm{d}\mu_k(x).
\end{equation}

Integrating \eqref{E59} with respect to $s$ over $F\cap(\ell_{m+1},\ell_m)$ gives
\begin{equation}\label{E510}
\begin{aligned}
\int_{\mathbb{R}^d}|u(\ell_m,x)|^2\,\mathrm{d}\mu_k(x)
&\le \varepsilon\int_{\mathbb{R}^d}|u(\ell_{m+2},x)|^2\,\mathrm{d}\mu_k(x)\\
&\quad+\frac{e^{2C_{\mathrm{Hold}}\left(1+\frac1{\ell_{m+1}-\ell_{m+2}}\right)}}
 {\varepsilon\,|F\cap(\ell_{m+1},\ell_m)|}\\
&\qquad\times\int_{F\cap(\ell_{m+1},\ell_m)}\int_E
 |u(s,x)|^2\,\mathrm{d}\mu_k(x)\,\mathrm{d}s.
\end{aligned}
\end{equation}

By \eqref{E58},
\[
|F\cap(\ell_{m+1},\ell_m)| \ge \frac13(\ell_m-\ell_{m+1}) \ge \frac13 e^{-\frac{1}{\ell_m-\ell_{m+1}}},
\]
we deduce from \eqref{E510} that
\begin{equation}\label{E511}
\begin{aligned}
\int_{\mathbb{R}^d} |u(\ell_m,x)|^2 \, \mathrm{d}\mu_k(x)
&\le \varepsilon \int_{\mathbb{R}^d} |u(\ell_{m+2},x)|^2 \, \mathrm{d}\mu_k(x) \\
&\quad + \frac{3}{\varepsilon}e^{\frac{1}{\ell_m-\ell_{m+1}}+2C_{\mathrm{Hold}}\left(1+\frac{1}{\ell_{m+1}-\ell_{m+2}}\right)}
\int_{F\cap(\ell_{m+1},\ell_m)}\int_{E} |u(s,x)|^2 \, \mathrm{d}\mu_k(x)\, \mathrm{d}s.
\end{aligned}
\end{equation}

Furthermore, from \eqref{E57} we have
\begin{equation}\label{E512}
\ell_m - \ell_{m+1} = \frac{1}{1+\lambda}(\ell_m - \ell_{m+2})
\end{equation}
and
\begin{equation}\label{E513}
\ell_{m+1} - \ell_{m+2} = \frac{\lambda}{1+\lambda}(\ell_m - \ell_{m+2}).
\end{equation}
Substituting \eqref{E512} and \eqref{E513} into \eqref{E511} yields
\begin{equation} \label{E514}
\begin{aligned}
\int_{\mathbb{R}^d} |u(\ell_m,x)|^2 \, \mathrm{d}\mu_k(x)
&\le \varepsilon \int_{\mathbb{R}^d} |u(\ell_{m+2},x)|^2 \, \mathrm{d}\mu_k(x) \\
&\quad + 3e^{2C_{\mathrm{Hold}}} \frac{1}{\varepsilon} e^{\frac{C'}{\ell_m-\ell_{m+2}}}
\int_{F\cap(\ell_{m+1},\ell_m)} \int_{E} |u(s,x)|^2 \, \mathrm{d}\mu_k(x)\, \mathrm{d}s,
\end{aligned}
\end{equation}
where
\begin{equation}\label{E515}
C' := 1+\lambda + \frac{2C_{\mathrm{Hold}}(1+\lambda)}{\lambda}.
\end{equation}

We rewrite \eqref{E514} as
\begin{equation}\label{E516}
\begin{aligned}
\varepsilon e^{-\frac{C'}{\ell_m-\ell_{m+2}}} \int_{\mathbb{R}^d} |u(\ell_m,x)|^2 \, \mathrm{d}\mu_k(x)
&- \varepsilon^2 e^{-\frac{C'}{\ell_m-\ell_{m+2}}} \int_{\mathbb{R}^d} |u(\ell_{m+2},x)|^2 \, \mathrm{d}\mu_k(x) \\
&\le 3e^{2C_{\mathrm{Hold}}} \int_{F\cap(\ell_{m+1},\ell_m)} \int_{E} |u(s,x)|^2 \, \mathrm{d}\mu_k(x)\, \mathrm{d}s.
\end{aligned}
\end{equation}

Now fix $\lambda \in (1/\sqrt{2},1)$ and set $\mu := \frac{1}{2-\lambda^{-2}} > 1$. In \eqref{E516}, choose
\[
\varepsilon = \exp\left[-\frac{(\mu-1)C'}{\ell_m-\ell_{m+2}}\right].
\]
Then we obtain
\begin{equation}\label{E517}
\begin{aligned}
e^{-\frac{\mu C'}{\ell_m-\ell_{m+2}}}\int_{\mathbb{R}^d} |u(\ell_m,x)|^2\, \mathrm{d}\mu_k(x)
&- e^{-\frac{(2\mu-1)C'}{\ell_m-\ell_{m+2}}}\int_{\mathbb{R}^d} |u(\ell_{m+2},x)|^2\, \mathrm{d}\mu_k(x) \\
&\le 3e^{2C_{\mathrm{Hold}}}
\int_{F\cap(\ell_{m+1},\ell_m)}\int_{E} |u(s,x)|^2\, \mathrm{d}\mu_k(x)\, \mathrm{d}s.
\end{aligned}
\end{equation}

A direct calculation shows
\begin{equation}\label{E518}
\exp\left[-\frac{(2\mu-1)C'}{\ell_m-\ell_{m+2}}\right]
= \exp\left[-\frac{\mu C'}{\lambda^2(\ell_m-\ell_{m+2})}\right].
\end{equation}
Since
\[
\ell_{m+2}-\ell_{m+4} = \lambda^2(\ell_m-\ell_{m+2}),
\]
we infer from \eqref{E517} and \eqref{E518} that
\[
\begin{aligned}
&e^{-\frac{\mu C'}{\ell_m-\ell_{m+2}}}\int_{\mathbb{R}^d} |u(\ell_m,x)|^2\, \mathrm{d}\mu_k(x)
-e^{-\frac{\mu C'}{\ell_{m+2}-\ell_{m+4}}}\int_{\mathbb{R}^d} |u(\ell_{m+2},x)|^2\, \mathrm{d}\mu_k(x) \\
&\le 3e^{2C_{\mathrm{Hold}}}
\int_{F\cap(\ell_{m+1},\ell_m)}\int_{E} |u(s,x)|^2\, \mathrm{d}\mu_k(x)\, \mathrm{d}s.
\end{aligned}
\]
Summing the above inequality over all odd $m$ yields
\[
\begin{aligned}
e^{-\frac{\mu C'}{\ell_1-\ell_3}}\int_{\mathbb{R}^d} |u(\ell_1,x)|^2\, \mathrm{d}\mu_k(x)
&\le 3e^{2C_{\mathrm{Hold}}}\sum_{\substack{m\ge1\\ m\text{ odd}}}\int_{F\cap(\ell_{m+1},\ell_m)}\int_{E} |u(s,x)|^2\, \mathrm{d}\mu_k(x)\, \mathrm{d}s \\
&\le 3e^{2C_{\mathrm{Hold}}}\int_{F\cap(\ell,\ell_1)}\int_{E} |u(s,x)|^2\, \mathrm{d}\mu_k(x)\, \mathrm{d}s \\
&\le 3e^{2C_{\mathrm{Hold}}}\int_{F}\int_{E} |u(s,x)|^2\, \mathrm{d}\mu_k(x)\, \mathrm{d}s.
\end{aligned}
\]

Thus,
\begin{equation}\label{E519}
\int_{\mathbb{R}^d} |u(T,x)|^2\, \mathrm{d}\mu_k(x)
\le \int_{\mathbb{R}^d} |u(\ell_1,x)|^2\, \mathrm{d}\mu_k(x)
\le 3e^{2C_{\mathrm{Hold}}}e^{\frac{\mu C'}{\ell_1-\ell_3}}
\int_{F}\int_{E} |u(s,x)|^2\, \mathrm{d}\mu_k(x)\, \mathrm{d}s,
\end{equation}
which gives \eqref{E54} with
\[
C_{\mathrm{obs}}=3\exp\left[2C_{\mathrm{Hold}}+\frac{\mu C'}{\ell_1-\ell_3}\right].
\]

Finally, in the case $F=(0,T)$, we choose
\[
\ell_1=\frac{2T}{3},\quad \ell=\frac{T}{3},\quad \lambda=\sqrt{\frac{2}{3}}.
\]
Then (see \eqref{E515})
\[
\ell_1-\ell_3=\frac{T}{9},\qquad \mu=2,\qquad C'\le 2+6C_{\mathrm{Hold}}.
\]
From \eqref{E519} we derive
\[
\begin{aligned}
\int_{\mathbb{R}^d}|u(T,x)|^2\, \mathrm{d}\mu_k(x)
&\le 3e^{2C_{\mathrm{Hold}}}e^{\frac{36(1+3C_{\mathrm{Hold}})}{T}}
\int_{0}^{T}\int_{E}|u(s,x)|^2\, \mathrm{d}\mu_k(x)\, \mathrm{d}s \\
&\le e^{36(1+3C_{\mathrm{Hold}})\left(1+\frac{1}{T}\right)}
\int_{0}^{T}\int_{E}|u(s,x)|^2\, \mathrm{d}\mu_k(x)\, \mathrm{d}s.
\end{aligned}
\]

This completes the proof.
\end{proof}

Finally, we prove $(\mathrm{iv}) \Rightarrow (\mathrm{i})$.

\begin{lemma}\label{LFIN}
Assume $k_\alpha>0$ for every root. Suppose that a measurable set $E \subset \mathbb{R}^d$ satisfies the observability inequality \eqref{EOBS}. Then $E$ is $\gamma$-thick at scale $L$ with respect to $\mu_k$ for some $\gamma>0$ and $L>0$.
\end{lemma}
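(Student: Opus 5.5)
We argue by contradiction, following the classical strategy of \cite{GWW} and testing \eqref{EOBS} against heat solutions whose initial data are localized in regions where $E$ is sparse. Suppose $E$ is not thick with respect to $\mu_k$. By Theorem~\ref{TEV}, $E$ is then not thick with respect to Lebesgue measure either. Hence for every $L>0$ and $\gamma>0$ there is a cube $x_L+LQ$ with $|E\cap(x_L+LQ)|<\gamma L^d$. Using Theorem~\ref{TEV} in the other direction, we may equivalently choose, for every $n\in\mathbb{N}^+$, a point $x_n$ with
\[
\mu_k\bigl(E\cap(x_n+nQ)\bigr)\le \tfrac1n\,\mu_k(x_n+nQ).
\]
We fix $T=1$ and let $C_{\mathrm{obs}}$ be the constant in \eqref{EOBS}.

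\emph{Test solutions.} We take $u_0=h_1(\cdot,x_n)$, i.e.\ $u(t,\cdot)=h_{1+t}(\cdot,x_n)$. By the semigroup property and symmetry \eqref{eq:2.26},
\[
\|u(T)\|_{L_k^2}^2=h_{2+2T}(x_n,x_n).
\]
By Lemma~\ref{LBOUND} (with $\Lambda\ge 1$ because the empty sequence is admissible for $(x,x)$), this is bounded below by $c\,\mu_k(B(x_n,1))^{-1}$. For the observation term we split $E=(E\cap(x_n+nQ))\cup(E\setminus(x_n+nQ))$.
\begin{itemize}
\item[(a)] \emph{Far part.} The upper bound \eqref{E2.2}, together with $\Lambda\le C$, gives $h_{1+t}(y,x_n)\le C\mu_k(B(x_n,1))^{-1}e^{-c\,d_G(y,x_n)^2}$, after comparing $\mu_k(B(y,\sqrt{s}))$ with $\mu_k(B(x_n,1))$ via \eqref{EWBOUND}. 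Squaring, integrating over $\{|y-x_n|\ge n/2\}$ and using \eqref{EWBOUND}, the far contribution is $o(1)\cdot\mu_k(B(x_n,1))^{-1}$ as $n\to\infty$.
\item[(b)] \emph{Near part.} We use $\sup_y h_{1+t}(y,x_n)^2\le C\mu_k(B(x_n,1))^{-2}$ and the sparseness bound to get a contribution of at most $C\mu_k(B(x_n,1))^{-2}\,n^{-1}\mu_k(x_n+nQ)$.
\end{itemize}

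\emph{Main obstacle.} The near part is too crude, since $\mu_k(x_n+nQ)/\mu_k(B(x_n,1))$ grows polynomially in $n$ and swamps the factor $1/n$. The fix is to exploit the Gaussian decay inside the big cube as well. Cover $x_n+nQ$ by unit cubes $Q_j$, where the kernel is at most $C\mu_k(B(x_n,1))^{-1}e^{-c|j-x_n|^2}$ (up to reflected copies). Since sparseness in the large cube does not directly give sparseness in each $Q_j$, we instead choose the cubes adaptively. Non-thickness at scale $n$ yields a point $x_n$ at which some fixed-size cube $x_n+L_0Q$ ($L_0$ fixed, depending on the Gaussian scale) satisfies $\mu_k(E\cap(x_n+L_0Q))\le\varepsilon\,\mu_k(x_n+L_0Q)$ with $\varepsilon$ arbitrarily small. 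This can be arranged by taking $L=L_0$ fixed and $\gamma=\varepsilon\to0$ in the negation of thickness.

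We then rescale time instead of space: we use $u_0=h_{\tau}(\cdot,x_n)$ with $\tau$ small relative to $L_0^2$ but $T=1$ fixed, or equivalently keep $\tau=1$ and let $L_0$ be large but fixed. Then the far part is bounded by $e^{-cL_0^2}$ times the main term. The near part is at most $C\varepsilon\,\mu_k(x_n+L_0Q)/\mu_k(B(x_n,1))$, and by \eqref{EWBOUND} this ratio is at most $C L_0^{d_k}$ uniformly in $x_n$. Choosing first $L_0$ large and then $\varepsilon$ small makes the right side of \eqref{EOBS} at most $\tfrac{1}{2C_{\mathrm{obs}}}$ times the left side, which is a contradiction. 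The delicate points are the uniform (in $x_n$ near root hyperplanes) comparisons $\mu_k(B(y,\sqrt{1+t}))\approx\mu_k(B(x_n,1))$ for $|y-x_n|\lesssim L_0$, the contribution of the reflected points $\sigma x_n$ in $d_G$, and the lower bound for $h_{2+2T}(x_n,x_n)$; all of these follow from Lemma~\ref{LBOUND} and \eqref{EWBOUND}. The reflected copies are handled by applying the same smallness to the $|G|$ cubes $\sigma x_n+L_0Q$. This requires negating thickness simultaneously for the orbit, which holds by the $G$-invariance of $\mu_k$ only after possibly replacing $E$ by $\bigcap_{\sigma\in G}\sigma E$; this symmetrization is the step I expect to need the most care.
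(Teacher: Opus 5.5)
Your overall scheme can be made to work: heat-kernel data $u_0=h_1(\cdot,x_n)$, the on-diagonal lower bound $h_4(x_n,x_n)\ge c\,\mu_k(B(x_n,1))^{-1}$, and a near/far split around a sparse cube of fixed side $L_0$. The gap is the far-part estimate, and your repair of it does not work.

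Once you bound $\Lambda$ by a constant, \eqref{E2.2} only gives the factor $e^{-c\,d_G(y,x_n)^2}$. This factor does not decay near the reflected points $\sigma_\alpha x_n$. They lie at Euclidean distance $2|\langle\alpha,x_n\rangle|/|\alpha|$ from $x_n$, hence in general inside $\{|y-x_n|\ge L_0/2\}$. On $B(\sigma_\alpha x_n,1)$ your bound therefore gives a contribution of order $\mu_k(B(x_n,1))^{-1}$, the same size as the left-hand side of \eqref{EOBS}. So the far part is not $e^{-cL_0^2}$ times the main term.

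The proposed fix is to get sparseness on all cubes $\sigma x_n+L_0Q$ at once by replacing $E$ with $\bigcap_{\sigma\in G}\sigma E$. This fails because observability passes to supersets of $E$, not to subsets. Thus $\bigcap_\sigma\sigma E$ need not satisfy \eqref{EOBS}, while the observable set $\bigcup_\sigma\sigma E$ need not be sparse anywhere.

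A concrete example: take $d=1$ and $E=(-\infty,0)$. This $E$ is not thick. Yet for every $x$, at least one of the cubes $\pm x+L_0Q$ carries at least half of its $\mu_k$-mass in $E$. Here $\bigcap_\sigma\sigma E=\emptyset$ and $\bigcup_\sigma\sigma E=\mathbb{R}\setminus\{0\}$. Your argument cannot exclude observability of this half-line. That is exactly the case decided by the mass the heat flow carries from $x_n$ to $-x_n$, which is only polynomially small in $|x_n|$.

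The missing ingredient is the Euclidean polynomial factor contained in $\Lambda$. In every $\rho_{\boldsymbol{\alpha}}$ with $\boldsymbol{\alpha}\neq\emptyset$, the $j=0$ factor is $(1+|x-y|/\sqrt t)^{-2}$. Moreover, $\emptyset\in\mathcal{A}(x,y)$ only when $d_G(x,y)=|x-y|$, in which case the Gaussian already decays in $|x-y|$. Hence
\[
h_s(x_n,y)\le C\,\mu_k(B(x_n,1))^{-1}(1+|x_n-y|)^{-2}e^{-c\,d_G(x_n,y)^2},\qquad 1\le s\le 2,
\]
which is the estimate the paper takes from \cite[Remark~3.3]{JDA1}. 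Square this and use
\[
\int e^{-2c\,d_G(x_n,y)^2}\,\mathrm{d}\mu_k(y)\le\sum_{\sigma\in G}\int e^{-2c|y-\sigma x_n|^2}\,\mathrm{d}\mu_k(y)\le C\,\mu_k(B(x_n,1)),
\]
which follows from $G$-invariance of $\mu_k$ and doubling. Then the whole region $\{|y-x_n|\ge L_0/2\}$, reflected copies included, contributes at most $C(1+L_0)^{-4}\mu_k(B(x_n,1))^{-1}$.

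With this bound you need neither symmetrization nor contradiction. For every center $x_0$, choose $L_0$ large in terms of $C_{\mathrm{obs}}$; this gives
\[
\mu_k\bigl(E\cap(x_0+L_0Q)\bigr)\ge c\,\mu_k(B(x_0,1))\ge c'L_0^{-d_k}\mu_k(x_0+L_0Q).
\]

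The paper's proof rests on the same polynomial tail bound but uses different test data. It takes $u_0=\chi_{Q_1}$, gets $0\le u\le1$ from positivity and mass conservation, and compares concentric cubes via Lemma~\ref{L4001}. Your repaired heat-kernel version replaces Lemma~\ref{L4001} by the on-diagonal lower bound and doubling. The cost is a thickness constant of order $L_0^{-d_k}/C_{\mathrm{obs}}$ instead of the paper's $1/(4C_{\mathrm{obs}})$.
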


\begin{proof}
% [R8-LANG-P2-20] Polished the exposition without changing the argument.
It suffices to use observability at $T=1$. We enlarge the constant, if necessary, so that $C_{\mathrm{obs}}\ge1$. Write $V(y,r)=\mu_k(B(y,r))$. The estimate in \cite[Remark~3.3, equation~(3.12)]{JDA1}, together with symmetry, gives
\[
h_t(x,y)\le\frac{C}{V(y,\sqrt t)}
\left(1+\frac{|x-y|}{\sqrt t}\right)^{-2}e^{-c\,d_G(x,y)^2/t}.
\]
By \eqref{EWBOUND}, $w(y)/V(y,\sqrt t)\le Ct^{-d/2}$. Also
\[
\int_{\mathbb{R}^d}e^{-c\,d_G(x,y)^2/t}\,\mathrm{d}y
\le\sum_{\sigma\in G}\int_{\mathbb{R}^d}e^{-c|x-\sigma y|^2/t}\,\mathrm{d}y
\le C t^{d/2}.
\]
Consequently,
\begin{equation}\label{eq:heat-tail-review}
\sup_{x\in\mathbb{R}^d}\sup_{0<t\le1}
\int_{|x-y|\ge a}h_t(x,y)\,\mathrm{d}\mu_k(y)
\le C(1+a)^{-2}=:\varepsilon_a.
\end{equation}
Choose $M>0$ so that $q:=\varepsilon_{M/2}\le\min\{1/8,1/(8C_{\mathrm{obs}})\}$. For an arbitrary center $x_0$, set
\[
Q_i=x_0+(L+iM)Q\quad(i=0,1,2),\qquad u_0=\chi_{Q_1},\qquad u(t)=H_tu_0.
\]
Positivity and conservation of mass imply $0\le u\le1$. By \eqref{eq:heat-tail-review},
\begin{equation}\label{E531}
u(1,x)\ge1-q\quad(x\in Q_0).
\end{equation}
Symmetry, Tonelli's theorem, and $u^2\le u$ give, uniformly for $0<t\le1$,
\begin{equation}\label{E532}
\int_{Q_2^c}u(t,x)^2\,\mathrm{d}\mu_k(x)
\le\int_{Q_1}\int_{Q_2^c}h_t(x,y)\,\mathrm{d}\mu_k(x)\mathrm{d}\mu_k(y)
\le q\mu_k(Q_1).
\end{equation}
Observability now yields
\begin{equation}\label{E533}
(1-q)^2\mu_k(Q_0)\le C_{\mathrm{obs}}\mu_k(E\cap Q_2)
+C_{\mathrm{obs}}q\mu_k(Q_1).
\end{equation}
Choose $L$ sufficiently large in Lemma~\ref{L4001}, with an increment of $2M$, so that $\mu_k(Q_0)\ge(3/4)\mu_k(Q_2)$ uniformly in $x_0$. Since $\mu_k(Q_1)\le\mu_k(Q_2)$,
\[
C_{\mathrm{obs}}\mu_k(E\cap Q_2)
\ge\left[\frac34\left(\frac78\right)^2-\frac18\right]\mu_k(Q_2)
=\frac{115}{256}\mu_k(Q_2)\ge\frac14\mu_k(Q_2).
\]
Thus $E$ is thick at side length $L+2M$, with thickness parameter $1/(4C_{\mathrm{obs}})$.
\end{proof}

\begin{proof}[Proof of Theorem~\ref{TOHS}]
We use the following cycle of implications:
\[
(\mathrm{i}) \Rightarrow (\mathrm{ii}) \Rightarrow (\mathrm{iii}) \Rightarrow (\mathrm{iv}) \Rightarrow (\mathrm{i}).
\]
These implications correspond, respectively, to Lemmas~\ref{thm:suff}, \ref{L61}, \ref{lem:obs}, and \ref{LFIN}.
\end{proof}

% [R8-REF-SORT] Sorted by first-author surname; de Jeu under D. Same authors: year, then title. Citation keys retained.

\end{document}